\newtheorem{theorem}{Theorem}
\newtheorem{proposition}{Proposition}
\newtheorem{corollary}{Corollary}
\newtheorem{fact}{Fact}
\newtheorem{remark}{Remark}
\newtheorem{lemma}{Lemma}
\newtheorem{assumption}{Assumption}
\newtheorem{condition}{Condition}
\algnewcommand\INPUT{\item[\textbf{Input:}]}
\algnewcommand\OUTPUT{\item[\textbf{Output:}]}
\begin{document}

\title{Straggler-Robust Distributed Optimization in Parameter-Server Networks
\thanks{}
}

\author{Elie Atallah, Nazanin Rahnavard~\IEEEmembership{Senior Member,~IEEE}, and Chinwendu Enyioha
        \\
        \IEEEauthorblockA{{Department of Electrical and Computer Engineering}\\
{University of Central Florida, Orlando, FL}\\
Emails: \{elieatallah@knights., nazanin@eecs., cenyioha@\}ucf.edu}
}

\maketitle

\begin{abstract}
Optimization in distributed networks plays a central role in almost all distributed machine learning problems. In principle, the use of distributed task allocation has reduced the computational time, allowing better response rates and higher data reliability. However, for these computational algorithms to run effectively in complex distributed systems, the algorithms ought to compensate for communication asynchrony, network node failures and delays known as stragglers. These issues can change the effective connection topology of the network, which may vary over time, thus hindering the optimization process. In this paper, we propose a new distributed unconstrained optimization algorithm for minimizing a convex function which is adaptable to a parameter server network. In particular, the network worker nodes solve their local optimization problems, allowing the computation of their local coded gradients, which will be sent to different server nodes. Then within this parameter server platform each server node aggregates its communicated local gradients, allowing convergence to the desired optimizer. This algorithm is robust to network's worker node failures, disconnection, or delaying nodes known as stragglers. One way to overcome the straggler problem is to allow coding over the network.  We further extend this coding framework to enhance the convergence of the proposed algorithm under such varying network topologies. By using coding and utilizing evaluations of gradients of uniformly bounded delay we further enhance the proposed algorithm performance. Finally, we implement the proposed scheme in MATLAB and provide comparative results demonstrating the effectiveness of the proposed framework.
\end{abstract}

\begin{IEEEkeywords}
distributed optimization, gradient coding, synchronous, centralized networks
\end{IEEEkeywords}

\section{Introduction}
{M}{any} problems in distributed systems over the cloud, or in wireless ad hoc networks \citep{johansson2008distributed,rabbat2004distributed,ram2009distributed}, are formulated as convex optimization programs in a parallel computing scheme. Depending on the computation architecture of these networks; that is, centralized, decentralized or fully distributed, the optimization techniques are adapted to accommodate such structures. However, the malfunctioning of processors directly impacts the overall performance of parallel computing. This malfunctioning is referred to as the \emph{straggling problem}. Many applications, whether over the cloud or in local distributed networks, have experienced considerable time delays, due in part to this straggling problem. Asynchronous \cite{li2014communication}, \cite{ho2013more} and synchronous algorithms \cite{zaharia2008improving}, \cite{chen2016revisiting} have been proposed to overcome this problem.  
While Lee et al. \cite{lee2016speeding} and Dutta et al. \cite{dutta2016short} describe techniques for mitigating stragglers in different applications, a recent work by Tandon et al. \cite{tandon17a} focused on codes for recovering the batch gradient of a loss function (i.e., synchronous gradient descent). 
Specifically, a coding scheme in \cite{tandon17a} was proposed, enabling a distributed division of tasks into uncoded (naive) and coded parts. This partition alleviates the effect of straggling servers in a trade-off between computational complexity, communication complexity and time delay.
This novel coding scheme solves this problem by providing robustness to partial failure or delay of nodes in a centralized master/workers network. \\ 
For example, in machine learning applications, gradient descent algorithms are employed to optimize parameters of the model. When the size of the training datasets are large, it is not practical to train the model on a single machine. To speed up the used algorithms, gradient computations can be distributed among multiple machines. In a centralized network framework, a parameter server platform with synchronous gradient descent updating, consists of a number of servers which are connected to multiple workers where each worker computes a partial gradient from its local dataset. Then the servers aggregate the partial gradients to obtain a full gradient and solve the model. This aggregation can be done under a full-gradient/ unique-estimate scheme or under distributed partial-gradients/ non-unique-estimate scheme utilizing the stochastic gradient descent approach and achieving consensus. To overcome the effect of delaying nodes, failures, or disconnections, the data is distributed among the workers in a redundant manner, and workers return coded computations to the servers according to the aforementioned coding scheme. \\
To this end, we present a novel algorithm implementable on a parameter server network, with multibus connection between server and worker nodes under a synchronous updating paradigm, which is robust to stragglers by utilizing coding schemes and allowing asynchronous implementation through the use of stale gradients with a uniformly bounded delay. Asynchronous evaluation of gradients with uniform bounded delay is realized with no other statistical assumption. % To the best of the authors' knowledge such an assumption is too relaxed and in the general literature other assumptions are proposed in conjunction to make the analysis more tractable. %(see Subsection IV B).
Under such relaxed assumption we analyze the convergence of our algorithm and find its convergence rate. %Moreover, this study is the first considering the convergence analysis of methods employing gradient coding.
We also provide numerical simulations to back our theoretical analysis.

\nocite{*}

\section{Problem Setup and Background}\label{sec:problem}

We consider a network of $ n $ server nodes indexed by $ V=\{1,2,\ldots,n\} $ and $ m  $ worker nodes on a parameter server platform using a multi-bus multiprocessor system with shared memory. Thus, we require arbitrary interleaved connections according to availability. The objective is to solve a minimization problem
\begin{equation}\label{eq1}
\begin{split}
\hat{{\bf x}} = arg \min_{{\bf x} \in  \mathbb{R}^{N}}  {f({\bf x})} = \sum_{(\iota)=1}^{p} f^{(\iota)}(x) \\
%\text{where} & \ f({\bf x})  = {\| {\bf y}- {\bf A}{\bf x} \|}^{2}_{2}, 
\end{split}
\end{equation}

% where the solution set $ \mathcal{X}^{*} $ belongs to a convex set $ \mathcal{X} $ where the gradient is bounded. 
Hence, the global function $ f $ to be minimized is divided into $ p $ partitions with arbitrary number of replication for each. After dividing the load into different partitions each partition $ (\iota) \in \{1,\ldots,p\} $ is distributed with an arbitrary redundancy $ \aleph_{(\iota)} $ among the workers. And each replica $ r_{(\iota)} $ of partition $ (\iota) $ consists of $ n_{r_{(\iota)}} $ worker nodes that utilize a gradient coding similar to that in \citep{tandon17a} to enable robustness to an allowed number of stragglers. 
% We consider the following optimization problem

%
To solve Problem \eqref{eq1}, we use a robust gradient-based algorithm which we call the \emph{Straggler-Robust Distributed Optimization} (SRDO) Algorithm.
As its name infers our algorithm has the extra feature of being robust to stragglers. SRDO uses gradient coding to mitigate stragglers and delayed gradient information.

\begin{figure}[H]
\hspace{-0.2cm}
\includegraphics[scale=0.35, bb=0 0 500 500]{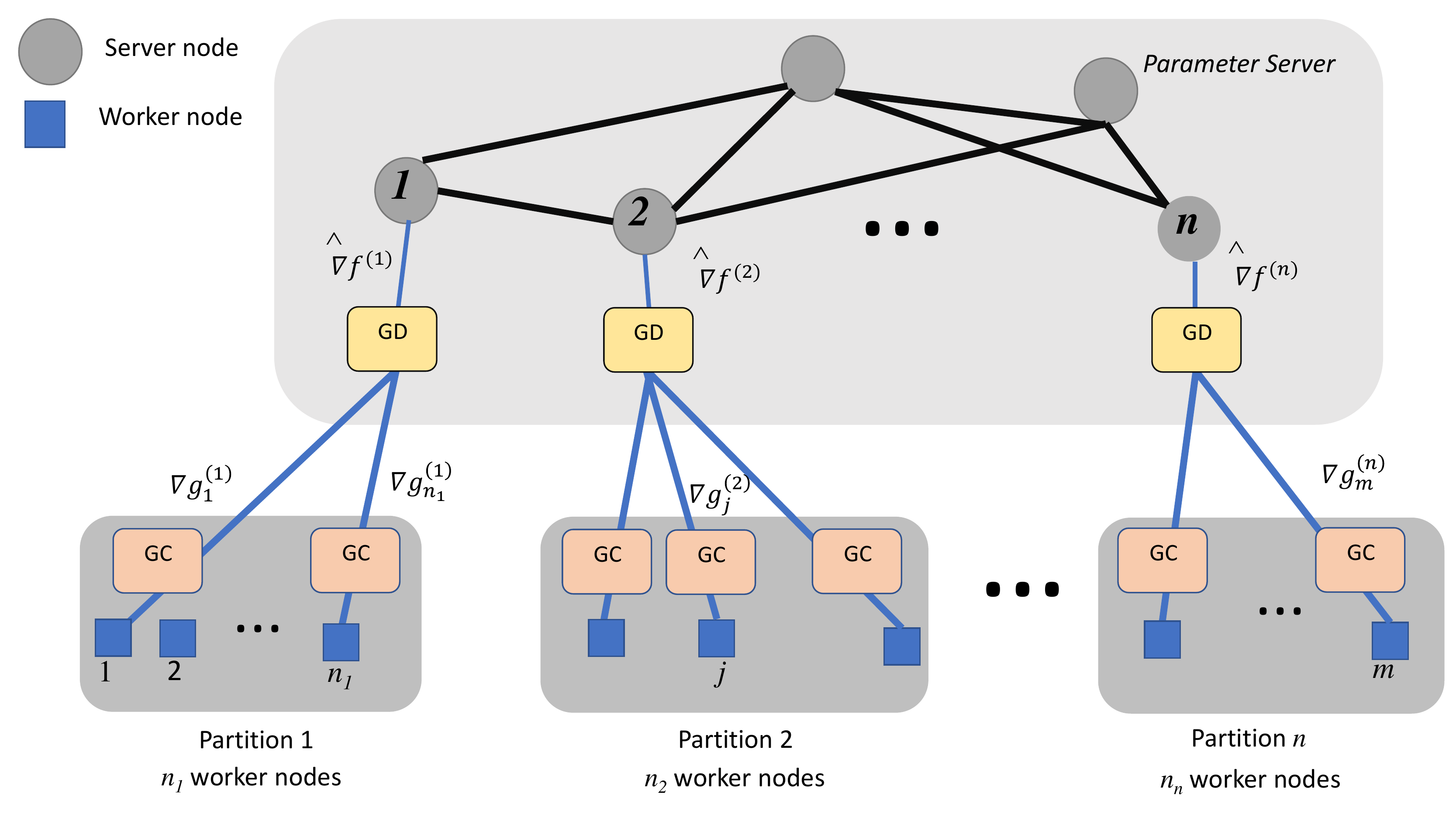}
\label{Parameter Server}
\caption{Parameter server network in the special case with $ n $ server nodes and $ m $ workers nodes. The worker nodes are  divided into $ p = n  $ partitions where partition $ i $ has $ n_{r_{(\iota)}} $ workers and each server is connected to its unique partition at all iterations.}
\label{Fig.1}
\end{figure}

\subsection{Gradient Coding Scheme}
\label{Gradient Coding}

As previously discussed, when solving Problem \eqref{eq1} using a distributed synchronous gradient descent worker nodes may be stragglers either due to failure, being compromised or just communicaiton issues with relaying information to the server nodes.  To address this probem, Tandon et al. \cite{tandon17a} proposed replicating some data across machines via a coding scheme, which allows for the recovery of the overall gradient from the aggregated local gradients of the connected nodes active in the network at a specific time step.

In \cite{tandon17a}, the authors derived a lower bound on the structure of the coding partition scheme that allows the computation of the overall gradient in the presence of $s$ or fewer stragglers; that is, if we have fewer stragglers than the maximum allowed, we can use any $ \nu - s $ combination of the connected nodes where $ \nu $ is the total number of worker nodes. 

To successfully code and decode the overall gradient when the number of stragglers is less than $s$, we require a scheme in which \cite{tandon17a}: 
\begin{equation}
{\bf B} \in \mathbb{R}^{\nu \times d}, \ \  {\bf A} \in \mathbb{R}^{{{\nu}\choose{s}} \times \nu}  \ \ \text{and}  \quad {\bf A}{\bf B}=\boldmath{1}_{{{\nu}\choose{s}} \times d}
\end{equation}
where the decoding matrix ${\bf A}$ and the encoding matrix ${\bf B}$ can be calculated from Algorithms 1 and 2 in \cite{tandon17a}, respectively (see Appendix~H).

We exploit the above coded scheme to compute $ \nabla{f}^{(\iota)} $, where $ \nabla{f}= \sum_{(\iota)=1}^{p}\nabla{f}^{(\iota)} $. Thus, we apply this coding scheme to compute $ \nabla{f}^{(\iota)} = \sum_{\lambda=1}^{n_{r_{(\iota)}}}f^{r_{(\iota)}}_{\lambda} $ (i.e., in the coding scheme, the number of data partitions $ d $ is without loss of generality equal to the number of nodes $ \nu $; therefore, in our partition's replica $ r_{(\iota)} $, $ d = n_{r_{(\iota)}} $, the number of nodes of replica $ r_{(\iota)} $ of partition $ (\iota) $ and the number of maximum allowed number of straggler is $ s_{r_{(\iota)}} $). Thus 
{ \begin{equation}\label{encdecmatrices}
\begin{split}
& {\bf B}^{r_{(\iota)}} \in \mathbb{R}^{\nu \times n_{r_{(\iota)}}}, \ \  {\bf A}^{r_{(\iota)}} \in \mathbb{R}^{{{\nu}\choose{s_{r_{(\iota)}}}} \times \nu},  \ \ \\ & \text{and}  \quad {\bf A}^{r_{(\iota)}}{\bf B}^{r_{(\iota)}}=\boldmath{1}_{{{\nu}\choose{s_{r_{(\iota)}}}} \times n_{r_{(\iota)}}},
\end{split}
\end{equation}}
are the encoding and decoding matrices of the coding scheme employed at replica $ r_{(\iota)} $ of partition $ (\iota) $. % See Appendix for related algorithms.

\begin{comment}
\begin{algorithm}
\caption{Algorithm to compute A}
\label{alg:algorithm-A}
\begin{algorithmic}[1]
\State {\bf Input:} \ {\bf B} satisfying Condition 1, $ s (< n) $  
\State { {\bf Output:} \ {\bf A} such that {\bf A}{\bf B} = ${\bf 1}_{{n\choose{s}} \times n} $ }  
\State {f = binom(n, s);}  
\State A = zeros(f, n); 
\State {\bf foreach} \ $ I \subset [n] $ \ s.t. \ $ |I| = (n-s) $ \ {\bf do} 
\State a = zeros(1, k);  
\State x = ones(1, k)/B(I, :);
\State a(I) = x;  
\State A = [A; a];  
\State {\bf end}
\end{algorithmic}
\end{algorithm}

\begin{algorithm}
\caption{Algorithm to construct $ {\bf B}= {\bf B}_{cyc}$}
\label{alg:algorithm-srdo}
\begin{algorithmic}[1]
\State {\bf Input:} \ $ n, s( < n) $
\State {\bf Output:} \ $ {\bf B} \in \mathbf{R}^{n \times n} $
with $ (s+1) $ non-zeros in each row
\State $H = randn(s, n)$;
\State $H(:, n) =-sum(H(:,1:n-1),2)$;
\State B = zeros(n);
\State {\bf for} \  $i = 1 : n$ \ {\bf do}
\State j = mod(i-1:s+i-1,n)+1;
\State B(i, j) = $ [1; -H(:,j(2:s+1)) \backslash H(:,j(1))] $;
\State {\bf end}
\end{algorithmic}
\end{algorithm}
\end{comment}

\section{Main Algorithm: Stragglers Robust Distributed Optimization Algorithm (SRDO)}\label{sec:algorithm}

To solve Problem \eqref{eq1}, we propose a synchronous iterative gradient descent method - the SRDO algorithm, which is robust to stragglers and applicable on a time-varying network topology (i.e., arbitrary connections) with more than the allowed number of stragglers through the use of gradients evaluations with uniformly bounded delay.
As we have mentioned before, an appropriate implementable platform for the algorithm is a multi-bus distributed parameter server shared memory network.
The network is equipped with a universal clock that synchronizes the actions of its server nodes.

\begin{algorithm}
\caption{SRDO Algorithm}
\label{alg:algorithm-srdo}
\begin{algorithmic}[1]
\Statex {\textbf{Given: $ f({\bf x})$} = $ \sum_{(\iota)=1}^{p}f^{(\iota)}(x)$; to compute $x^* \in \mathbb{R}^n$}
\State \textbf{Initialization:} Server $ i $ sets $ v_{i}(0) $ arbitrarily; set $tol$; set ${\bf \epsilon}_{i}= \emph{small number}$
\While{${\bf \epsilon}_{i} > tol $}
\For {server $i = 1,...,n$}
\State Send $ v_{i}(k) $ to workers \Comment{push step}
\State Decode partition gradient $\widehat{\nabla{{f}}}^{(\iota)}(k)$.  (SRDO-1) \Comment{pull step}
%\nabla{g^{r_{(\iota)}}_{w}(k)}
\State 
% \textbf{Iteration $ k + 1 $:} & \\
${\bf x}_{i}(k+1)=  {\bf v}_{i}(k)-\alpha_{k}\widehat{\nabla{{f}}}^{(\iota)}(k)$. (SRDO-2)
\State ${\bf v}_{i}(k+1)= \sum_{j=1}^{n}[{\bf W}(k+1)]_{ij}{\bf x}_{j}(k+1)$. (SRDO-3)
\State $ {\bf \epsilon}_{i}= \| {\bf v}_{i}(k+1)-  {\bf v}_{i}(k) \|_{2} $ 
\EndFor
\EndWhile
\Statex {\textbf{Output}}  \ \ $ {\bf x}^{*} = {\bf x}_{i}(k) $.
\end{algorithmic}
\end{algorithm}

\textbf{Distribution of Load:} At the beginning of the algorithm, the total load is divided into $ p $ partitions and each partition $ (\iota) $ is replicated $ \aleph_{(\iota)} $ times. Each replica $ r_{(\iota)} $ of a partition $ (\iota) $ consists of a number of worker nodes $ n_{r_{(\iota)}} $ and utilizes a coding scheme in the distribution of the load upon its workers (see Section~\ref{Gradient Coding}). We note that not only replicas of different partitions employ different coding schemes but also the coding scheme need not be unanimous among replicas of the same partition. 

After the distribution of the load in the distribution step accordingly, we can implement the algorithm as follows:
\newline
\noindent \textbf{Initialization:} Each server node $ q $ at global iteration $ k = 0 $ begins with random weighted average $ {\bf v}_{q}(0) \in \mathbb{R}^{N} $ for $ q \in \{1,2,\dots,n\} $, and sends ${\bf v}_{q}(0)$ to arbitrary number of worker nodes in the \underline{push step}, as shown in Fig. 2.
\newline
\noindent {\bf Push step:} Under a global clock each server $ q $ sends a message containing the weighted average $ {\bf v}_{q}(k) $ to an arbitrary number of workers (i.e., that could be in different partitions' replicas). Using the weighted averages received, worker nodes compute the coded local gradients. 
\newline
\noindent {\bf Pull step:} Under a global clock each server $ i $ gets activated and calls for coded gradients from an arbitrary partition's replica. Without loss of generality, let us assume that the connected replica is $ r_{(\iota)} $. Thus, the worker node $ w $ from replica $ r_{(\iota)} $ of partition $ (\iota) $ will send the coded gradient $ \nabla{g}_{w}^{r_{(\iota)}} $ to the connected server $ i $. After server $i$ receives the coded gradients from the sending worker nodes of the partition, it decodes the gradient of partition $ \nabla{f}^{(\iota)} $. Using the decoded gradient, server $i$ computes the estimate $ {\bf x}_{i}(k+1) $ according to (SRDO-2). 
\newline
\noindent {\bf Consensus step:} Under a global clock each server $ i $ gets activated again and computes its weighted average $ {\bf v}_{i}(k+1) $ from its connected servers estimates $ {\bf x}_{j}(k+1)$ according to (SRDO-3). The iterative process, summarized in Algorithm \ref{alg:algorithm-srdo}, continues until the weighted averages between consecutive estimates are small enough.
\begin{remark}
Here, we seek arbitrary server to worker connection in the push steps and arbitrary partition to server connection in the pull steps. That is, servers are not only connected to the same partition in the pull step all the time.
\end{remark}

On the workers side: \\
{\bf Worker Computation of Coded Gradient:} When a worker $ w $ of a replica $ r_{(\iota)} $ of partition $ (\iota) $ receives a weighted average $ {\bf v}_{q}(k) $ from a server $ q $ it computes its coded gradient $ \nabla{g}_{w}^{r_{(\iota)}} $ evaluated at $ {\bf v}_{q}(k) $ relative to the coding scheme used on replica $ r_{(\iota)} $ of partition $ (\iota) $. Thus, the coded local gradient $(\nabla{g_{w}}^{r_{(\iota)}})^{T}={\bf B}_{w}^{r_{(\iota)}} \overline{\nabla{{f}}^{r_{(\iota)}}} $ evaluated at $ {\bf v}_{q}(k) $ where $ f^{(\iota)} $ corresponds to the partition $ (\iota) $ function where $ f^{(\iota)} = \sum_{\lambda=1}^{n_{r_{(\iota)}}}f^{r_{(\iota)}}_{\lambda} $ 
and  $ \overline{\nabla{{f}}^{r_{(\iota)}}}=[(\nabla{f}^{r_{(\iota)}}_{1})^{T} \; (\nabla{f}^{r_{(\iota)}}_{2})^{T} \; \ldots \; (\nabla{f}^{r_{(\iota)}}_{n_{r_{(\iota)}}})^{T}]^{T} $. The function $f^{r_{(\iota)}}_{\lambda}$ corresponds to $ \lambda \in \{1,2,\dots,n_{r_{(\iota)}}\}$, where $ f^{(\iota)}= \sum_{\lambda=1}^{n_{r_{(\iota)}}}f^{r_{(\iota)}}_{\lambda} $, and $ n_{r_{(\iota)}} $ is the number of worker nodes in replica $ r_{(\iota)} $ of partition $ (\iota) $ (see Section~\ref{Gradient Coding}). Workers can get delayed in their computation of coded gradients and might not send their computed gradients to a connected server to their partition at a subsequent time instant. Here, there is one aspect of asynchronous behavior in the algorithm that is attributed to the computed partition gradient used at the connected server. That is, we do not require that the coded gradients of which the partition's (or connected server) partial gradient is decoded to be of the consecutive previous instant weighted averages evaluations but rather of possibly older weighted averages. That is, ${\bf v}_{q}(k-\Delta{k}(q,w,r_{(\iota)},k))$ where $ 0 \leq \Delta{k}(q,w,r_{(\iota)},k)\leq H $ and  $k-\Delta{k}(q,w,r_{(\iota)},k)$ is the instant at which server $ q $ has sent its weighted average to replica $ r_{(\iota)} $ of partition $ (\iota) $. More precisely, the received coded local gradient $\nabla{g}_{w}^{r_{(\iota)}}({\bf v}_{q}(k-\Delta{k}(q,w,r_{(\iota)},k))$ of worker $ w $ of replica $ r_{(\iota)} $ of partition $ (\iota) $ where $ 0 \leq \Delta{k}(q,w,r_{(\iota)},k)\leq H $, i.e.,  here, $g_{w}^{r_{(\iota)}}$ is a function employed due to the coding scheme used at replica $ r_{(\iota)} $ of partition $ (\iota) $, and $\nabla{g_{w}^{r_{(\iota)}}}$ corresponds to $ g_{w}^{r_{(\iota)}} =\sum_{\lambda=1}^{n_{r_{(\iota)}}}{\bf B}^{r_{(\iota)}}_{w,\lambda}f^{r_{(\iota)}}_{\lambda} $. In other words, we advance asynchronous behavior through allowing the use of coded gradients of uniformly bounded delay in the evaluation of the partitions' partial gradients utilized in the iteration update of each server. 

%\textbf{Initialization:} Each server node $ q $ at global iteration $ k = 0 $ begins with random weighted average $ {\bf v}_{q}(0) \in \mathbb{R}^{N} $ for $ q \in \{1,2,\dots,n\} $, and sends ${\bf v}_{q}(0)$ to arbitrary number of worker nodes in the \underline{push step}, as shown in Fig. 2. %Through utilizing the coding scheme employed in replica $ r_{(\iota)} $ each worker node $ w $ of a replica $ r_{(\iota)} $ of partition $ (\iota) $ then solves for its partition gradient in a coded manner using its received ${\bf v}_{q}(0)$, thus finding a coded local gradient $\nabla{g_{w}}^{r_{(\iota)}}$. 

\begin{remark}
It is worth noting the following about the synchronous behavior of SRDO:
\begin{itemize}
\item If at the push step of iteration $ k $ a worker $ w $ of replica $ r_{(\iota)} $ is unable to receive any of the weighted averages $ {\bf v}_{q}(k-\Delta{k}(q,w,r_{(\iota)},k)) $ for all $ 0 \leq \Delta{k}(q,w,r_{(\iota)},k) \\ 
\leq H $ for all $ q \in \{1,\ldots,n\} $, that allows it to compute its local coded gradient in time  before the pull step for the same instant $ k $, then that worker is considered a straggler. A worker can also become a straggler if its computed coded gradient gets delayed in the network.
\item Each worker has to finish its computation before interacting with a server in the pull step.
In that respect, gradients evaluated at previous weighted averages $ {\bf v}_{q}(k-\Delta{k}(q,w,r_{(\iota)},k)) $ for all $ 0 \leq \Delta{k}(q,w,r_{(\iota)},k) \leq H $ for all $ q \in \{1,\ldots,n\} $, can still be used by a server as long as the worker would send its coded gradient when it finishes computation at the time $ k $ of the server nodes' synchronous update. Another scenario is when these coded gradients from prior instants are kept as stale gradients in the memory of a server and are used in the subsequent updates. These stale gradients can be used at instant $ k $ by the server if they are of weighted averages evaluations $ {\bf v}_{q}(k-\Delta{k}(q,w,r_{(\iota)},k)) $ for all $ 0 \leq \Delta{k}(q,w,r_{(\iota)},k) \leq H $ for all $ q \in \{1,\ldots,n\} $. The benefit of the latter scenario on the prior one is that it mitigates the effect of disconnection at the instant of update.
\item If at instant $ k $ server $i$ does not receive coded gradients of any of the partitions that are evaluated at weighted averages $ {\bf v}_{q}(k-\Delta{k}(q,w,r_{(\iota)},k)) $ for all $ 0 \leq \Delta{k}(q,w,r_{(\iota)},k) \leq H $ for all $ q \in \{1,\ldots,n\} $ and thus is unable to decode the partition's partial gradient, it sets  its estimate $ {\bf x}_{i}(k+1)={\bf v}_{i}(k) $ (i.e., Unanimous full straggling partitions scenario.)
\end{itemize}
\end{remark}

\begin{comment}
\begin{remark}\label{remark2}
At instant $ k $ each server $ i $ decodes an arbitrary partition $ (\iota) $ gradient from worker nodes of replica $ r_{(\iota)} $ of partition $ (\iota) $ coded gradients. The estimates ${\bf v}_{q}(k-\Delta{k}(q,w,r_{(\iota)},k))$ received from arbitrary servers $ q $ are used to evaluate the coded local gradient $\nabla{g}_{w}^{r_{(\iota)}}({\bf v}_{q}(k-\Delta{k}(q,w,r_{(\iota)},k))$ of worker $ w $ of replica $ r_{(\iota)} $ of partition $ (\iota) $ where $ 0 \leq \Delta{k}(q,w,r_{(\iota)},k)\leq H $, i.e.,  here, $g_{w}^{r_{(\iota)}}$ is a function employed due to the coding scheme used at replica $ r_{(\iota)} $ of partition $ (\iota) $, and $\nabla{g_{w}^{r_{(\iota)}}}$ corresponds to $ g_{w}^{r_{(\iota)}}
=\sum_{\lambda=1}^{n_{r_{(\iota)}}}{\bf B}^{r_{(\iota)}}_{w,\lambda}f^{r_{(\iota)}}_{\lambda} $. The function $f^{r_{(\iota)}}_{\lambda}$ corresponds to $ \lambda \in \{1,2,\dots,n_{r_{(\iota)}}\}$, where $ f^{(\iota)}= \sum_{\lambda=1}^{n_{r_{(\iota)}}}f^{r_{(\iota)}}_{\lambda} $, and $ n_{r_{(\iota)}} $ is the number of worker nodes in replica $ r_{(\iota)} $ of partition $ (\iota) $.

We define the set of connected worker nodes of replica $ r_{(\iota)} $ of partition $ (\iota) $ at iteration $k$ pull step to server node $ i $ as $\Gamma_{i,r_{(\iota)}}(k)$, and thus, the set of stragglers to server node $ i $ as $\Gamma^{\complement}_{i, r_{(\iota)}}(k) \triangleq \{ 1,2,\dots,n_{r_{(\iota)}} \} \setminus \Gamma_{i,r_{(\iota)}}(k)$. 
\end{remark}
\end{comment}

\subsection{Remark on the Computation of the Gradient under Different Scenarios}
We distinguish three scenarios in which the partition's gradient is computed at the pull step:
\begin{enumerate}
\item \textbf{Scenario~1; Allowed Number of Stragglers Gradient Computation Scenario}: In this scenario, the number of all partition's worker nodes disconnected to their respective server node $ i $ at the pull step is less than or equal to the maximum allowed number of stragglers (i.e., $ | \Gamma_{i,r_{(\iota)}}^{\complement}(k) | \leq s_{r_{(\iota)}} $). And the server decodes the partition's inexact gradient $ \widehat{\nabla{f}}^{(\iota)} $ by the brute application of the described coding scheme. %Where the received coded gradients at instant $ k $ are $\nabla{g}_{w}^{r_{(\iota)}}({\bf v}_{q}(k-\Delta{k}(q,w,r_{(\iota)},k))$ of worker $ w $ of replica $ r_{(\iota)} $ of partition $ (\iota) $ where $ 0 \leq \Delta{k}(q,w,r_{(\iota)},k)\leq H $.

\item \textbf{Scenario~2; Ignore Stragglers Gradient Computation Scenario}: In this scenario, the number of all partition's worker nodes disconnected (i.e., fail or get delayed) from their respective server node $ i $ at the pull step is greater than the maximum allowed number of stragglers, 
(i.e., $ | \Gamma_{i,r_{(\iota)}}^{\complement}(k) | > s_{r_{(\iota)}} $).
And server node $ i $ uses only the received coded local gradients from its connection set $ \Gamma_{i,r_{(\iota)}}(k) $ to compute the partition's inexact gradient $  \widehat{\nabla{f}}^{(\iota)} $. %The received coded gradients at instant $ k $ are $\nabla{g}_{w}^{r_{(\iota)}}({\bf v}_{q}(k-\Delta{k}(q,w,r_{(\iota)},k))$ of worker $ w $ of replica $ r_{(\iota)} $ of partition $ (\iota) $ where $ 0 \leq \Delta{k}(q,w,r_{(\iota)},k)\leq H $.

\item \textbf{Scenario~3; Ignore Stragglers-Stale Gradients Gradient Computation Scenario:}
%In this scenario, the number of all worker nodes disconnected (i.e., fail or get delayed) from their respective server node $ i $ at the pull step is greater than the maximum allowed number of stragglers, 
%(i.e., $ | \Gamma_{i,r_{(\iota)}}^{\complement}(k) | > s_{r_{(\iota)}} $).
In this scenario, $ | \Gamma_{i,r_{(\iota)}}^{\complement}(k) | > s_{r_{(\iota)}} $
Server node $ i $ uses the received local gradients at instant $ k $ from  its connection set $ \Gamma_{i,r_{(\iota)}}(k) $ of the connected worker nodes of partition $ \iota $ identified with server $ i $ and stale coded local gradients saved at the server at prior instants to compute the partition's inexact gradient  $ \widehat{\nabla{f}}^{(\iota)} $. %The received and stale coded gradients at instant $ k $ are $\nabla{g}_{w}^{r_{(\iota)}}({\bf v}_{q}(k-\Delta{k}(q,w,r_{(\iota)},k))$ of worker $ w $ of replica $ r_{(\iota)} $ of partition $ (\iota) $ where \\
$ 0 \leq \Delta{k}(q,w,r_{(\iota)},k)\leq H $.

\end{enumerate}

\subsection{The General Tractable Updating Step used in the Analysis}

In order to include all possible scenarios, in the SRDO step (SRDO-2) the iterate ${\bf x}_{i}(k+1)$ employing the partition's decoded gradient is calculated by the server node. i.e.,
{ \begin{equation}\label{eq5}
\begin{split}
{\bf x}_{i}(k+1)={\bf v}_{i}(k)-\alpha_{k}\widehat{\nabla{f}}^{(\iota)}(k)
\end{split}
\end{equation}}
and $ { \widehat{\nabla{f}}^{(\iota)}(k)= 
\sum_{w \in \Gamma_{i,r_{(\iota)}}(k)}{\bf A}^{r_{(\iota)}}_{fit,w}\nabla{g}^{r_{(\iota)}}_{w}({\bf v}_{q}(k-\Delta{k}(q,w,r_{(\iota)},k))) } $ \\
where $ 0 \leq \Delta{k}(q,w,r_{(\iota)},k)\leq H  $ and $ q \in V=\{1,\dots,n\} $. $ \widehat{\nabla{f}}^{(\iota)}(k) $ can be further written in a more reduced form corresponding to each index $i \in \{1,\ldots,n\} $ as:
{ \begin{equation}\label{eq5.1}
\begin{split}
& \widehat{\nabla{f}}^{(\iota)}  (k)=  \nabla{f^{(\iota)}({\bf v}_{i}(k))} 
+ \sum_{w \in \Gamma_{fit,r_{(\iota)}}}  {\bf A}^{r_{(\iota)}}_{fit,w}\sum_{\lambda=1}^{n_{r_{(\iota)}}}{\bf B}^{r_{(\iota)}}_{w,\lambda} \times \\
& (\nabla{f}^{r_{(\iota)}}_{\lambda}({\bf v}_{q}(k-\Delta{k}(q,w,r_{(\iota)},k)))-\nabla{f}^{r_{(\iota)}}_{\lambda}({\bf v}_{i}(k))).
\end{split}
\end{equation}}
Where the set of connected worker nodes of replica $ r_{(\iota)} $ of partition $ (\iota) $ at iteration $k$ pull step to server node $ i $ is $\Gamma_{i,r_{(\iota)}}(k)$, and thus, the set of stragglers to server node $ i $ as $\Gamma^{\complement}_{i, r_{(\iota)}}(k) \triangleq \{ 1,2,\dots,n_{r_{(\iota)}} \} \setminus \Gamma_{i,r_{(\iota)}}(k)$. And $ \Gamma_{s} $ is the support's column indices of row $ s $ of matrix $ {\bf A}^{r_{(\iota)}} $ defined in \eqref{encdecmatrices}. While $ fit = argmax_{s}| \Gamma_{i,r_{(\iota)}} \cap \Gamma_{s} | $, the index of the row of matrix $ {\bf A}^{r_{(\iota)}} $ whose support has maximum intersection with the indices identified with the set $ \Gamma_{i,r_{(\iota)}} $ and  $ \Gamma_{fit, r_{(\iota)}} $, and $ {\bf A}^{r_{(\iota)}}_{fit,:} $ are the corresponding column indices of the support of that row and the row itself, respectively.

\begin{remark}
The term $ \nabla{f}^{r_{(\iota)}}_{\lambda}({\bf v}_{q}(k-\Delta{k}(q,w,r_{(\iota)},k))) $ in \eqref{eq5.1} is zero if $ w \in \Gamma_{fit,r_{(\iota)}} \setminus \Gamma_{i,r_{(\iota)}}(k) $ and is as is if $ w \in \Gamma_{i,r_{(\iota)}}(k) $.
\end{remark}

\subsection{Assumptions on the Convex Functions}

\begin{assumption}\label{A3.1}
 We assume:
\begin{itemize}
\item[\textbf{(a)}]{The functions  $\mathit{f^{(\iota)}} : \mathbb{R} ^{N} \rightarrow \mathbb{R} , \ i=1\hdots, n$  are convex, differentiable and have Lipschitz gradients with constants $L_i$ over $\mathbb{R}^N$.
%\|\nabla{f}^{(\iota)}({\bf x})-\nabla{f}^{(\iota)}({\bf y})\| \leq L_{i}\|{\bf x}- {\bf y}\|
for all ${\bf x}$, ${\bf y} \in \mathbb{R}^{N}$. }
% \item[\textbf{(b)}]{ The gradients $\nabla{f}^{(\iota)}({\bf x})$, where $i \in V $ are bounded over the set $\mathcal{X} $ where $ \mathcal{X}^{*} =\{ {\bf x} | {\bf x} = \arg \min f({\bf x}) \} \subset \mathcal{X} $; i.e., there exists a constant $G_{f}$ such that $\| \nabla{f}^{(\iota)}({\bf x}) \| \leq G_{f} $ for all $ {\bf x} \in \mathcal{X}$ and all $i \in V $.}
\item[\textbf{(b)}] {$f^{(\iota)}$ in their turn are such that $ f^{(\iota)}=\sum_{\lambda=1}^{n_{r_{(\iota)}}}f^{r_{(\iota)}}_{\lambda} $ and each $f^{r_{(\iota)}}_{\lambda}$ is convex with $ \nabla{f}^{r_{(\iota)}}_{\lambda} $ having Lipschitz constant $ \frac{L_{i}}{c_{r_{(\iota)},\lambda}} $ where $c_{r_{(\iota)},\lambda}$ is dependent on replica $ r_{(\iota)} $ of partition $(\iota)$ subpartition of functions $ f^{r_{(\iota)}}_{\lambda} $, i.e., $ c_{r_{(\iota)},\lambda} > 1 $.}
Let $ L = \max_{i}L_{i} $ then 
{ \begin{equation}\label{Lipshconstpart}
\begin{split}
\|\nabla{f}^{(\iota)}({\bf x})-\nabla{f}^{(\iota)}({\bf y})\| \leq L \|{\bf x}- {\bf y}\|
\end{split}
\end{equation}}
and 
{ \begin{equation}\label{Lipshconstsubpart}
\begin{split}
\|\nabla{f}^{r_{(\iota)}}_{\lambda}({\bf x})-\nabla{f}^{r_{(\iota)}}_{\lambda}({\bf y})\| \leq \frac{L_{i}}{c_{r_{(\iota)},\lambda}} \|{\bf x}- {\bf y}\| \leq L \|{\bf x}- {\bf y}\|
\end{split}
\end{equation}}
since $ c_{r_{(\iota)},\lambda} > 1 $.
Notice, Assumption~1(c) follows from the additivity of Lipshitz constants.

 \item[\textbf{(c)}]{ The solution set of \eqref{eq1} is denoted by $ \mathcal{X}^{*} =\{ {\bf x} | {\bf x} = \arg \min f({\bf x}) \} $. And $ {\bf x}^{*}=\min f({\bf x}) $, ${\bf x}^{(\iota)}= \min f^{(\iota)}({\bf x}) $ and $ f^{\lambda, r_{(\iota)}}=\min f^{r_{(\iota)}}_{\lambda}({\bf x}) $.}
\end{itemize}
\end{assumption}
The assumed structure on $f$ is typical for problems of this kind and enables a detailed convergence analysis.
Next, we make the following assumptions about the server-server edge weights in the consensus step.

\begin{assumption}\label{A3.3}[\textbf{Row Stochastic}]
At each iteration $k$ of Algorithm~\ref{alg:algorithm-srdo}, the matrices $ {\bf W}(k) $ in (SRDO-3) are chosen such that $ [{\bf W}(k)]_{ij} $ depends on the network server to server connection topology in a way that allows the servers to reach consensus: \\
$ \textbf{ (a) }  $ $\sum_{j=1}^{n}[{\bf W}(k)]_{ij} = 1 -\mu $ for all $i \in V$ and $ 0 < \mu < 1 $.\\
$ \textbf{ (b) } $ There exists a scalar $\nu \in (0,1)$ such $[{\bf W}(k)]_{ij} \geq \nu$ if $[{\bf W}(k)]_{ij} > 0$. \\
$ \textbf{ (c) } $ $\sum_{i=1}^{n}[{\bf W}(k)]_{ij} = 1 - \mu $ for all $j \in V$ and $ 0 < \mu < 1 $. \\
$ \textbf{ (d) } $ If server $ i $ is disconnected from server $ j $ at instant $ k $, then $[{\bf W}(k)]_{ij} = 0$.
\end{assumption}

\begin{comment}

\begin{remark}
Notice that for the matrices $ {\bf W}(k) $ we have for $ b_{i} = \sum_{j=1}^{n}[{\bf W}(k)]_{ij} a_{j} $ that
\begin{equation}
\begin{split}
\sum_{i=1}^{n} \| b_{i} \| {\leq} \sum_{i=1}^{n} \sum_{j=1}^{n}[{\bf W}(k)]_{ij} \| a_{j} \| {\leq} ( 1{-}\mu ) \sum_{j=1}^{n} \| a_{j} \| {\leq} \sum_{j=1}^{n} \| a_{j} \|
\end{split}
\end{equation}
and we are going to use either inequality as needed in our analysis.
\end{remark}

\end{comment}

\begin{assumption}\label{A3.4} Bounded Delayed Evaluation and Gradient Computation: %\hl{Distinction between this and Remark 8? They appear to be the same comments.}

In the pull step server $ i $ decodes the gradient $ \nabla{f}^{(\iota)}(k) $ of partition $ (\iota) $ at time $ k $ from coded gradients of workers of replica $ r_{(\iota)} $ of partition $ (\iota) $ that are evaluated from weighted averages $ {\bf v}_{q} (k - \Delta{k}(q,w,r_{(\iota)},k)) $ where $  0 \leq \Delta{k}(q,w,r_{(\iota)},k)\leq H $.
We also assume the use of stale gradients saved at the memory of each server $ i $ evaluated from weighted averages of instants $k - \Delta{k}(q,w,r_{(\iota)},k)) $ where $  0 \leq \Delta{k}(q,w,r_{(\iota)},k)\leq H $ of arbitrary server $ q \in V = \{1,2,\ldots,n\} $.
\end{assumption}

\begin{assumption}\label{A3.5} Choice of Partition by Server in Pull Step

Each server $ i $ gets connected to a partition $ (\iota) $ out of the $ p $ partitions with a probability $ \gamma_{(\iota)} $ and gets no connection with any partition with a probability $ \gamma_{(0)} $. Here, $ \gamma_{(\iota)} $ can depend not only on the redundancy $ \aleph_{(\iota)} $ but also on other factors, e.g. the traffic in the network.
\end{assumption}

\begin{assumption}\label{A3.6} Diminishing Coordinated Synchronized Stepsizes

The stepsizes $ \alpha_{i,k} $ of server $ i $ are coordinated and synchronized where $ \alpha_{i,k}=\alpha_{k} > 0 $ and , $  \alpha_{k} \rightarrow 0 $. $ \sum_{k=0}^{\infty} \alpha_{k} = \infty $, $ \sum_{k=0}^{\infty} \alpha_{k}^{2} < \infty $. For example, a unanimous stepsize $ \alpha_{i,k} = \alpha_{k} = \frac{1}{k+1} $ among all servers $ i $ per iteration $ k $.
\end{assumption}

\begin{figure}[H]
\includegraphics[scale=0.35,bb=0 0 500 500]{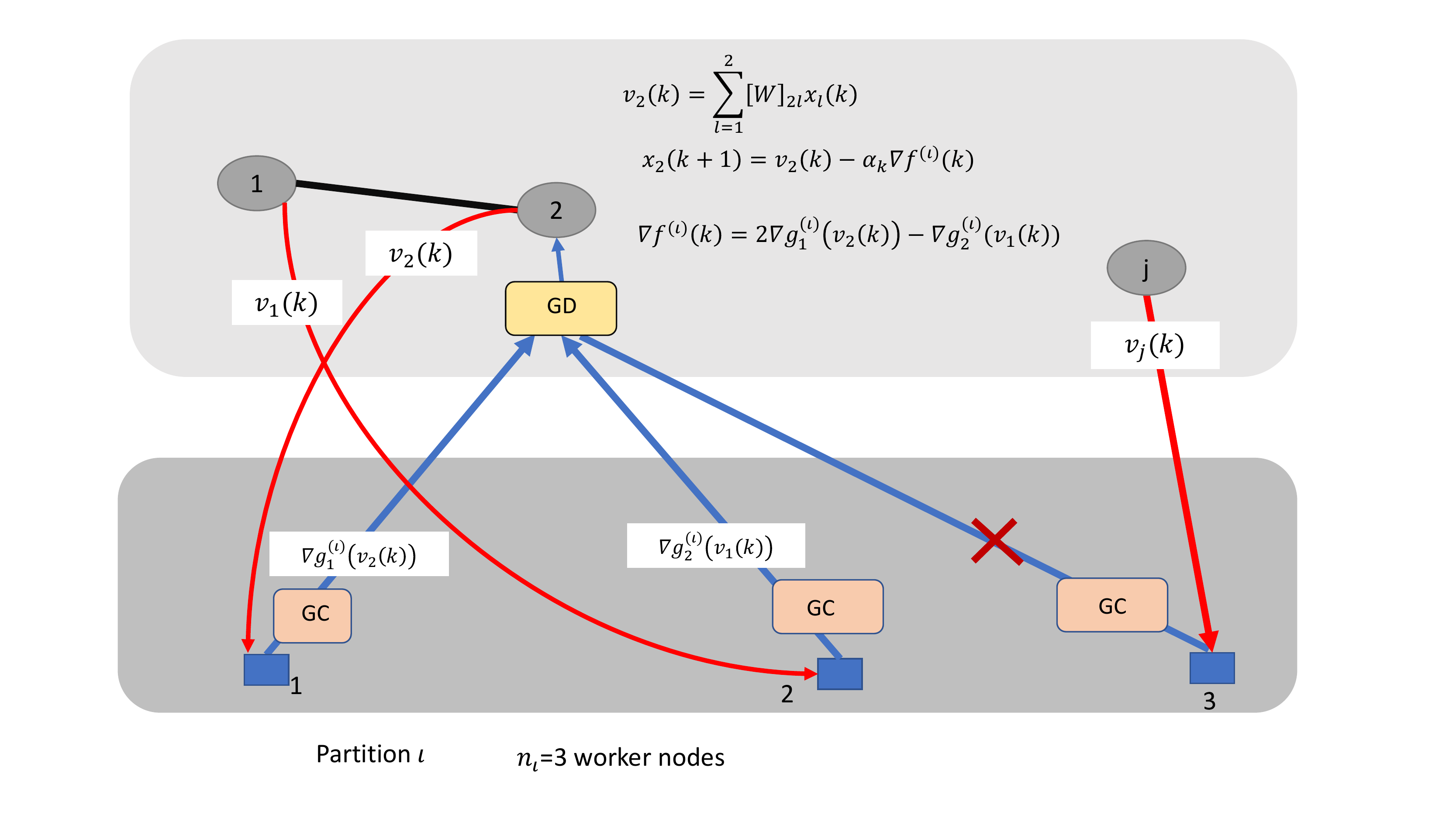}
\caption{Parameter server schematic with coding scheme $ 1 $ of allowed number of stragglers on partition $ 1 $ of $ 3 $ worker nodes connecting to sever $ 2 $. Notice that in this general scenario, the $ \widehat{\nabla}{f}_{2} $ that is calculated at server node $ 2 $ is identified with partition $ 1 $ gradient $ \widehat{\nabla}{f}^{(1)} $.}
\label{Fig.2}
\end{figure}

\section{Convergence Analysis for the Main Algorithm SRDO} \label{sec:convergence}

%We henceforth prove the convergence of  in the gradient computation scenario 1 or in Scenarios 2 or 3 (cf. Section~7 where we use $ \hat{\widehat{\widenabla{f}}}^{(\iota)} $ instead of $ \widehat{\nabla{f}}^{(\iota)} $, which follows a similar form and equivalently the same proof). 

Let $R_i(k)$ be defined as (see \eqref{eq5} and \eqref{eq5.1})

{ \begin{equation}\label{eq6}
\begin{split}
   {\bf R}_{i,r_{(\iota)}}(k)= & \alpha_{k} {\bf \epsilon}_{i,r_{(\iota)}}(k) \\
     =  -\alpha_{k} \sum_{w \in \Gamma_{fit,r_{(\iota)}}(k)} & {\bf A}^{r_{(\iota)}}_{fit,w}\sum_{\lambda=1}^{n_{r(\iota)}}{\bf B}^{r_{(\iota)}}_{w,\lambda} (\nabla{f}^{r_{(\iota)}}_{\lambda}({\bf v}_{q}(k-\Delta{k}(q,w,r_{(\iota)},k)) - \nabla{f}^{r_{(\iota)}}_{\lambda}({\bf v}_{i}(k)))
\end{split}    
\end{equation}}
%\hl{Equation above cuts into margins}
\newline
where $ 0 \leq \Delta{k}(q,w,r_{(\iota)},k) \leq H $ and $ q \in \{1,\ldots,n\} $.

Hence, from \eqref{eq5} and \eqref{eq5.1} we have

\begin{equation}\label{eq7}
{\bf x}_{i}(k+1)={\bf v}_{i}(k)-\alpha_{k}\nabla{f^{(\iota)}({\bf v}_{i}(k))}+R_{i}(k),
\end{equation}
or 
\begin{equation}\label{3.43}
 {\bf x}_{i}(k+1)=\overline{{\bf x}}_{i}(k+1) +{\bf R}_{i,r_{(\iota)}}(k), 
 \end{equation}
where $\overline{{\bf x}}_{i}(k+1)={\bf v}_{i}(k)-\alpha_{k}\nabla{f^{(\iota)}({\bf v}_{i}(k))}$.

%%%%%%%%%%
\begin{condition}\label{Condition-1}
If the sequence $ \{ {\bf v}_{i}(k) \} $ generated by Algorithm~\ref{alg:algorithm-srdo} satisfy \\
$ \max\limits_{(\iota), \lambda}\|{\bf x}^{*}-{\bf x}^{\lambda, r_{(\iota)}}\| \leq \max\limits_{k-H \leq \hat{k} \leq k} \|{\bf v}_{q}(\hat{k})-{\bf x}^{*}\| $ then Algorithm~\ref{alg:algorithm-srdo} is said to satisfy Condition~\ref{Condition-1}. 
\end{condition}

\begin{remark}
Notice this is generally most probably satisfied at the beginning of the algorithm process unless when all $ {\bf x}^{\lambda,r_{(\iota)}} = {\bf x}^{*} $ for all replicas $ r_{(\iota)} $ of partitions $ (\iota) $ where it is satisfied throughout the algorithm process.
\end{remark}

We partition Scenario~1, 2 and 3 into two parts:
\begin{itemize}
\item{{\bf Division~1:} includes Scenario~1, 3 , and Scenario~2 where Scenario~2 satisfies Condition~\ref{Condition-1}.}
\item{{\bf Division~2:} includes Scenario~2 not satisfying Condition~\ref{Condition-1}.}
\end{itemize}

\subsection{Evaluation of $ \sum_{j=1}^{n} \| {\bf R}_{j,r_{(\iota)}}(k) \| ^{2} $}\label{app:evaluateR}

Then for scenarios included in Division~1 we have
{ \begin{align*}
 \| {\bf R}_{j,r_{(\iota)}}(k) \|   \leq \alpha_{k}  \max\limits_{(\iota)}\| {\bf A}^{r_{(\iota)}} \| _{\infty} \|{\bf B}^{r_{(\iota)}}\| _{2,\infty} ( 2 L \max\limits_{k-H \leq \hat{k} \leq k, q \in V} \| {\bf v}_{q}(\hat{k}) -  x^{*}\| ) 
\end{align*}}
%

%And squaring both sides and using $ 2ab \leq a^{2} + b^{2} $ we have
%
{ \begin{align*}
 \| {\bf R}_{j,r_{(\iota)}}(k) \| ^{2} & \leq \alpha_{k} ^{2} \max\limits_{(\iota)} \| {\bf A}^{r_{(\iota)}} \| ^{2} _{\infty} \|{\bf B}^{r_{(\iota)}}\| _{2,\infty} ^{2}  
 ( 4 L^{2}  \max\limits_{k-H \leq \hat{k} \leq k, q \in V} \| {\bf v}_{q}(\hat{k}) -  x^{*}\|^{2} ) 
\end{align*}}
For scenarios in Division~2, that is, Scenario~2 where Condition~\ref{Condition-1} is not satisfied, we have

{ \begin{align*}
 \| {\bf R}_{j,r_{(\iota)}}(k) \|  & \leq \alpha_{k}  \max\limits_{(\iota)}\| {\bf A}^{r_{(\iota)}} \| _{\infty} \|{\bf B}^{r_{(\iota)}}\| _{2,\infty} \\
 & \times ( 2 L \max\limits_{k-H \leq \hat{k} \leq k} \| {\bf v}_{q}(\hat{k}) -  x^{*}\| + L \max\limits_{(\iota), \lambda} \| {\bf x}^{*} - {\bf x}^{\lambda,r_{(\iota)}} \|) 
\end{align*}}
%
%And squaring both sides and using $ 2ab \leq a^{2} + b^{2} $, we have
%
{ \begin{align*}
 \| {\bf R}_{j,r_{(\iota)}}(k) \|^{2}  &\leq \alpha_{k}^{2}  \max\limits_{(\iota)}\| {\bf A}^{r_{(\iota)}} \|^{2} _{\infty} \|{\bf B}^{r_{(\iota)}}\|^{2} _{2,\infty} \\
 & \times( 4 L^{2} \max\limits_{k-H \leq \hat{k} \leq k} \| {\bf v}_{q}(\hat{k}) -  x^{*}\| + 2 L^{2} \max\limits_{(\iota), \lambda} \| {\bf x}^{*} - {\bf x}^{\lambda,r_{(\iota)}} \|^{2}) 
\end{align*}}

And we can get the upper bounds for $ \| {\bf \epsilon}_{j,r_{(\iota)}}(k) \| $ and $ \| {\bf \epsilon}_{j,r_{(\iota)}}(k) \| ^{2} $ for both divisions by substituting $ {\bf \epsilon}_{j,r_{(\iota)}}(k) = \frac{1}{\alpha_{k}} \| {\bf R}_{j,r_{(\iota)}}(k) \| $, respectively.
For all scenarios irrespective of which part they belong, we have
%can upper bound by the bound which is a maximum for both parts, that is the bound of Division~2, so we have
{ \begin{align*}
 \| {\bf R}_{j,r_{(\iota)}}(k) \|  & \leq \alpha_{k}  \max\limits_{(\iota)}\| {\bf A}^{r_{(\iota)}} \| _{\infty} \|{\bf B}^{r_{(\iota)}}\| _{2,\infty} \\
 & \times ( 2 L \max\limits_{k-H \leq \hat{k} \leq k} \| {\bf v}_{q}(\hat{k}) -  x^{*}\| + L \max\limits_{(\iota), \lambda} \| {\bf x}^{*} - {\bf x}^{\lambda,r_{(\iota)}} \|) 
\end{align*}}
%
%And squaring both sides and using $ 2ab \leq a^{2} + b^{2} $, we have
%
{ \begin{align*}
 \| {\bf R}_{j,r_{(\iota)}}(k) \|^{2}  & \leq \alpha_{k}^{2}  \max\limits_{(\iota)}\| {\bf A}^{r_{(\iota)}} \|^{2} _{\infty} \|{\bf B}^{r_{(\iota)}}\|^{2} _{2,\infty} \\
 & \hspace{-0.5cm} \times ( 4 L^{2} \max\limits_{k-H \leq \hat{k} \leq k} \| {\bf v}_{q}(\hat{k}) -  x^{*}\| + 2 L^{2} \max\limits_{(\iota), \lambda} \| {\bf x}^{*} - {\bf x}^{\lambda,r_{(\iota)}} \|^{2}) 
\end{align*}}
%

%and consequently

\begin{proof}
See Appendix~A for the results in this subsection.
\end{proof}

%
%{ \begin{align*}
% \sum_{j=1}^{n}  \| {\bf R}_{j,r_{(\iota)}}(k-1) \| ^{2}  \leq \alpha_{k-1} ^{2} \max\limits_{(\iota)} \|{\bf A}^{r_{(\iota)}} \| ^{2} _{\infty} \|{\bf B}^{r_{(\iota)}}\| _{2,\infty} ^{2}( 4 L^{2} \sum_{i=1}^{n} \max\limits_{k-1-H \leq \hat{k} \leq k-1 ; q \in V} \| {\bf v}_{q}(\hat{k}) -  x^{*}\|^{2}) 
%\end{align*}}
%

And then
\begin{equation}\label{3.67a}
\begin{split}
\sum_{j=1}^{n} & \| {\bf \epsilon}_{j,r_{(\iota)}} (k) \|  \leq  \max\limits_{(\iota)} \|{\bf A}^{r_{(\iota)}} \| _{\infty} \|{\bf B}^{r_{(\iota)}}\| _{2,\infty} \\
& \times ( 2 L \max\limits_{k-H \leq \hat{k} \leq k} \sum_{j=1}^{n}\| {\bf v}_{q}(\hat{k}) -  x^{*}\| + n L \max\limits_{(\iota), \lambda} \| {\bf x}^{*} - {\bf x}^{\lambda,r_{(\iota)}} \| ) \end{split}
\end{equation}
And squaring both sides and using $ 2ab \leq a^{2} + b^{2} $, we have
\begin{equation}\label{3.67a}
\begin{split}
\sum_{j=1}^{n} & \| {\bf \epsilon}_{j,r_{(\iota)}} (k) \|^{2}  \leq  \max\limits_{(\iota)} \|{\bf A}^{r_{(\iota)}} \|^{2} _{\infty} \|{\bf B}^{r_{(\iota)}}\|^{2} _{2,\infty}  \\
& \times ( 4 L^{2}\max\limits_{k-H \leq \hat{k} \leq k} \sum_{j=1}^{n}\| {\bf v}_{q}(\hat{k}) -  x^{*}\| + 2 n L^{2} \max\limits_{(\iota), \lambda} \| {\bf x}^{*} - {\bf x}^{\lambda,r_{(\iota)}} \| ) \end{split}
\end{equation}
%\end{comment}

\begin{lemma}\label{general_error_bound}
Let Assumptions \ref{A3.1}, \ref{A3.3}, \ref{A3.5} and \ref{A3.6} hold. Let the sequences $ \{{\bf x}_{i}(k)\} $ and $ \{ {\bf v}_{i}(k) \} $ be generated by Algorithm~\ref{alg:algorithm-srdo}. Then we have 
{\small \begin{equation}\label{ineq_general_error_bound}
\begin{split}
    & \sum_{l=1}^{n}\|  {\bf v}_{l}(k+1)-  {\bf x}^{*}\|^{2}  \leq (1-\mu)\sum_{j=1}^{n}\| {\bf v}_{j}(k)- {\bf x}^{*}\|^{2} \\
    & + 2\alpha_{k}\sum_{j=1}^{n}\sum_{(\iota)=1}^{p}\gamma_{(\iota)}\langle \nabla{f}^{(\iota)}( {\bf v}_{j}(k)), {\bf x}^{*}- {\bf v}_{j}(k)\rangle 
    + 2 \alpha_{k}\sum_{j=1}^{n} \sum_{(\iota)=1}^{p}\gamma_{(\iota)} \langle {\bf \epsilon}_{j,r_{(\iota)}}(k), {\bf v}_{j}(k)- {\bf x}^{*}\rangle  \\
    & + 2 (1 - \gamma_{(0)})\alpha_{k}^{2} \sum_{j=1}^{n}\sum_{(\iota)=1}^{p}\gamma_{(\iota)}[\|\nabla{f}^{(\iota)}( {\bf v}_{j}(k))\|^{2} + \| {\bf \epsilon}_{j,r_{(\iota)}}(k)\|^{2}]
\end{split}
\end{equation}}
\end{lemma}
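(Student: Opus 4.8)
The plan is to combine the two update rules---the local gradient step (SRDO-2)/\eqref{eq7} and the consensus averaging step (SRDO-3)---and expand the squared distance to $x^*$, keeping $\nabla f^{(\iota)}$ and $\epsilon_{j,r_{(\iota)}}(k)$ abstract (so that, unlike later lemmas, the convexity and Lipschitz parts of Assumption~\ref{A3.1} are not yet invoked; the argument here is essentially algebraic). The work splits according to the two distinct ``sub-stochastic'' weightings present in the algorithm: the consensus weights $[{\bf W}(k+1)]_{lj}$ of Assumption~\ref{A3.3}, whose column sums equal $1-\mu$, and the partition-selection weights $\gamma_{(\iota)}$ of Assumption~\ref{A3.5}, which sum to $1-\gamma_{(0)}$.

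First I would handle the gradient step. Writing \eqref{eq7} with $R_{j,r_{(\iota)}}(k)=\alpha_k\epsilon_{j,r_{(\iota)}}(k)$ from \eqref{eq6}, one has $ {\bf x}_j(k+1)-{\bf x}^* = ({\bf v}_j(k)-{\bf x}^*) + \alpha_k(\epsilon_{j,r_{(\iota)}}(k)-\nabla f^{(\iota)}({\bf v}_j(k)))$. Expanding $\|{\bf x}_j(k+1)-{\bf x}^*\|^2$ gives the leading term $\|{\bf v}_j(k)-{\bf x}^*\|^2$, the linear-in-$\alpha_k$ cross term $2\alpha_k\langle {\bf v}_j(k)-{\bf x}^*,\,\epsilon_{j,r_{(\iota)}}(k)-\nabla f^{(\iota)}({\bf v}_j(k))\rangle$ (which splits exactly into the two inner-product terms $\langle\nabla f^{(\iota)},{\bf x}^*-{\bf v}_j\rangle$ and $\langle\epsilon,{\bf v}_j-{\bf x}^*\rangle$ of the statement), and the quadratic remainder $\alpha_k^2\|\epsilon_{j,r_{(\iota)}}(k)-\nabla f^{(\iota)}({\bf v}_j(k))\|^2$. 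By Assumption~\ref{A3.5} the effective direction at server $j$ is the $\gamma$-weighted aggregate over partitions, so these terms enter weighted by $\gamma_{(\iota)}$ and summed over $(\iota)$ (with $\gamma_{(0)}$ the weight of the no-update event ${\bf x}_j(k+1)={\bf v}_j(k)$). The $(1-\gamma_{(0)})$ factor on the quadratic term is then produced by a Jensen/convexity bound for the squared norm against weights summing to $1-\gamma_{(0)}<1$, namely $\|\sum_{(\iota)}\gamma_{(\iota)}u_{(\iota)}\|^2\le(1-\gamma_{(0)})\sum_{(\iota)}\gamma_{(\iota)}\|u_{(\iota)}\|^2$ with $u_{(\iota)}=\epsilon_{j,r_{(\iota)}}(k)-\nabla f^{(\iota)}({\bf v}_j(k))$, followed by $\|a-b\|^2\le 2\|a\|^2+2\|b\|^2$ to supply the factor $2$ and separate $\|\nabla f^{(\iota)}\|^2$ from $\|\epsilon_{j,r_{(\iota)}}\|^2$.

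Next I would propagate these per-server estimates through the consensus step (SRDO-3). Summing $\|{\bf v}_l(k+1)-{\bf x}^*\|^2=\|\sum_j[{\bf W}(k+1)]_{lj}{\bf x}_j(k+1)-{\bf x}^*\|^2$ over $l$ and invoking Assumption~\ref{A3.3}---the row-sum bound yields the Jensen step $\|\sum_j[{\bf W}]_{lj}{\bf z}_j\|^2\le\sum_j[{\bf W}]_{lj}\|{\bf z}_j\|^2$, while interchanging the order of summation and using the column-sum identity $\sum_l[{\bf W}(k+1)]_{lj}=1-\mu$ produces the contraction factor $1-\mu$ on $\sum_j\|{\bf v}_j(k)-{\bf x}^*\|^2$. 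Collecting everything and summing over $j$ then assembles \eqref{ineq_general_error_bound}; the diminishing-stepsize Assumption~\ref{A3.6} is not needed for this single step but is carried for use downstream.

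The hard part is the bookkeeping of the two sub-stochastic normalizations, and in particular reconciling the ${\bf x}^*$ offset with the fact that the consensus weights sum to $1-\mu$ rather than $1$: because $\sum_j[{\bf W}(k+1)]_{lj}=1-\mu$ one cannot simply write ${\bf x}^*=\sum_j[{\bf W}]_{lj}{\bf x}^*$, so care is required to steer the contraction factor $1-\mu$ onto the leading $\|{\bf v}_j(k)-{\bf x}^*\|^2$ term while the linear cross terms retain the bare coefficient $2\alpha_k$. This is genuinely delicate since, at this stage, those inner products are left unsigned (the convexity bound $\langle\nabla f^{(\iota)},{\bf x}^*-{\bf v}_j\rangle\le 0$ is deferred to a later lemma), so one cannot cavalierly absorb a factor $(1-\mu)\le 1$ into them. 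Verifying that the $(1-\gamma_{(0)})$ factor arises solely from the partition-weight Jensen step, independently of the consensus $1-\mu$, and that the two normalizations land on exactly the right terms, is where the argument must be checked most carefully.
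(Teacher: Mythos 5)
Your proposal follows essentially the same route as the paper's own proof: expand the gradient step around ${\bf x}^{*}$ using $\widehat{\nabla f}^{(\iota)}({\bf v}_{j}(k)) = \nabla f^{(\iota)}({\bf v}_{j}(k)) - {\bf \epsilon}_{j,r_{(\iota)}}(k)$, apply the sub-stochastic Jensen bound $\|\sum_{(\iota)}\gamma_{(\iota)}u_{(\iota)}\|^{2}\le(1-\gamma_{(0)})\sum_{(\iota)}\gamma_{(\iota)}\|u_{(\iota)}\|^{2}$ together with $\|a-b\|^{2}\le 2\|a\|^{2}+2\|b\|^{2}$ to the quadratic term, and push the result through the consensus step via Jensen on the rows and the column-sum identity $\sum_{l}[{\bf W}(k+1)]_{lj}=1-\mu$. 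The two delicate points you flag --- the ${\bf x}^{*}$ offset under weights summing to $1-\mu$ rather than $1$, and the fact that the factor $1-\mu$ can only be dropped from nonnegative terms while the cross terms are left unsigned --- are genuine and are passed over silently in the paper's own proof, so your plan is, if anything, the more careful of the two.
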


\begin{proof}
See Appendix~B for proof. 
\end{proof}

\subsection{Convergence Results for SRDO in Scenario~1, 3 and in Scenario 2 satisfying Condition~\ref{Condition-1}}

\begin{lemma}\label{type1_error_bound}
Let Assumptions \ref{A3.1}, \ref{A3.3}, \ref{A3.5} and \ref{A3.6} hold. Let the functions in Assumption~\ref{A3.1} satisfy  $ f^{(\iota)}( {\bf x}^{*}) = f^{(\iota)}( {\bf x}^{(\iota)}) $ for all $ (\iota) $. Let the sequences $ \{{\bf x}_{i}(k)\} $ and $ \{ {\bf v}_{i}(k) \} $, $ i \in V $ be generated by Algorithm~\ref{alg:algorithm-srdo}. Then we have 
{ \begin{equation}\label{ineq_type1_error_bound}
\begin{split}
    & \sum_{l=1}^{n}\|  {\bf v}_{l}(k+1) - {\bf x}^{*}\|^{2}  \leq (1-\mu) \sum_{j=1}^{n}\|{\bf v}_{j}(k)- {\bf x}^{*}\|^{2} \\
   & + 2 \alpha_{k} \sum_{j=1}^{n}\sum_{(\iota)=1}^{p}\gamma_{(\iota)} \langle {\bf \epsilon}_{j,r_{(\iota)}}(k), {\bf v}_{j}(k) - {\bf x}^{*} \rangle  
   + 2 (1 -\gamma_{(0)}) \alpha_{k}^{2} \sum_{j=1}^{n}\sum_{(\iota)=1}^{p}\gamma_{(\iota)}\|{\bf \epsilon}_{j,r_{(\iota)}}(k)\|^{2} \\
   & - 2 \alpha_{k} \sum_{j=1}^{n}\sum_{(\iota)=1}^{p}\gamma_{(\iota)} a_{v_{j}(k),x^{*}}(b_{v_{j}(k),x^{*}} - (1 -\gamma_{(0)}) \alpha_{k} a_{v_{j}(k),x^{*}} ) \| {\bf v}_{j}(k) - {\bf x}^{*} \| ^{2}  
\end{split}
\end{equation}}
\end{lemma}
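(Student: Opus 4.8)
The plan is to obtain Lemma~\ref{type1_error_bound} directly from Lemma~\ref{general_error_bound} by rewriting the two gradient-dependent terms on the right-hand side of \eqref{ineq_general_error_bound} as scalar multiples of $\|{\bf v}_j(k)-{\bf x}^*\|^2$. First I would record the consequence of the added hypothesis: since each $f^{(\iota)}$ is convex and differentiable (Assumption~\ref{A3.1}) with minimizer ${\bf x}^{(\iota)}$, the equality $f^{(\iota)}({\bf x}^*)=f^{(\iota)}({\bf x}^{(\iota)})$ forces ${\bf x}^*$ to be a global minimizer of $f^{(\iota)}$ as well, so that $\nabla f^{(\iota)}({\bf x}^*)={\bf 0}$ for every $(\iota)$. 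This identity is precisely what makes the substitution below well behaved, and it is the only place where the new hypothesis is used.

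Next, for each $j$, $(\iota)$ and $k$ with ${\bf v}_j(k)\neq{\bf x}^*$ and $\nabla f^{(\iota)}({\bf v}_j(k))\neq{\bf 0}$, I would define
\begin{equation}
a_{v_j(k),x^*}=\frac{\|\nabla f^{(\iota)}({\bf v}_j(k))\|}{\|{\bf v}_j(k)-{\bf x}^*\|},\qquad b_{v_j(k),x^*}=\frac{\langle\nabla f^{(\iota)}({\bf v}_j(k)),{\bf v}_j(k)-{\bf x}^*\rangle}{\|\nabla f^{(\iota)}({\bf v}_j(k))\|\,\|{\bf v}_j(k)-{\bf x}^*\|}.
\end{equation}
By construction these yield the exact identities $\langle\nabla f^{(\iota)}({\bf v}_j(k)),{\bf x}^*-{\bf v}_j(k)\rangle=-a_{v_j(k),x^*}b_{v_j(k),x^*}\|{\bf v}_j(k)-{\bf x}^*\|^2$ and $\|\nabla f^{(\iota)}({\bf v}_j(k))\|^2=a_{v_j(k),x^*}^2\|{\bf v}_j(k)-{\bf x}^*\|^2$. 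Moreover, writing $\nabla f^{(\iota)}({\bf v}_j(k))=\nabla f^{(\iota)}({\bf v}_j(k))-\nabla f^{(\iota)}({\bf x}^*)$ and applying the Lipschitz bound \eqref{Lipshconstpart} gives $a_{v_j(k),x^*}\leq L$, while monotonicity of $\nabla f^{(\iota)}$ (convexity) together with $\nabla f^{(\iota)}({\bf x}^*)={\bf 0}$ and Cauchy--Schwarz gives $b_{v_j(k),x^*}\in[0,1]$; I would record these sign and boundedness facts for use in the subsequent supermartingale argument.

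Finally I would substitute the two identities into \eqref{ineq_general_error_bound}. The inner-product term $2\alpha_k\sum_j\sum_{(\iota)}\gamma_{(\iota)}\langle\nabla f^{(\iota)}({\bf v}_j(k)),{\bf x}^*-{\bf v}_j(k)\rangle$ becomes $-2\alpha_k\sum_j\sum_{(\iota)}\gamma_{(\iota)}a_{v_j(k),x^*}b_{v_j(k),x^*}\|{\bf v}_j(k)-{\bf x}^*\|^2$, and the $\|\nabla f^{(\iota)}\|^2$ contribution in the quadratic term becomes $2(1-\gamma_{(0)})\alpha_k^2\sum_j\sum_{(\iota)}\gamma_{(\iota)}a_{v_j(k),x^*}^2\|{\bf v}_j(k)-{\bf x}^*\|^2$. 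Adding these and factoring out $a_{v_j(k),x^*}$ collapses them into the single term $-2\alpha_k\sum_j\sum_{(\iota)}\gamma_{(\iota)}a_{v_j(k),x^*}(b_{v_j(k),x^*}-(1-\gamma_{(0)})\alpha_k a_{v_j(k),x^*})\|{\bf v}_j(k)-{\bf x}^*\|^2$, while the leading $(1-\mu)$ term, the $\langle{\bf \epsilon}_{j,r_{(\iota)}}(k),\cdot\rangle$ term, and the $\|{\bf \epsilon}_{j,r_{(\iota)}}(k)\|^2$ term carry over unchanged. Since every replacement is an equality, the inequality direction of Lemma~\ref{general_error_bound} is preserved, which is exactly \eqref{ineq_type1_error_bound}.

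I expect the algebra above to be routine; the only genuinely delicate point is the treatment of the degenerate configurations ${\bf v}_j(k)={\bf x}^*$ or $\nabla f^{(\iota)}({\bf v}_j(k))={\bf 0}$, where the ratios defining $a$ and $b$ are ill-posed. The cleanest remedy is to observe that in each such case the affected summand of \eqref{ineq_general_error_bound} already vanishes (invoking $\nabla f^{(\iota)}({\bf x}^*)={\bf 0}$ for the first case), so setting $a_{v_j(k),x^*}=b_{v_j(k),x^*}=0$ there keeps the rewritten identity valid term by term without altering either side.
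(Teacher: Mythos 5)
Your proposal is correct and follows essentially the same route as the paper: both start from Lemma~\ref{general_error_bound}, use the hypothesis $f^{(\iota)}({\bf x}^*)=f^{(\iota)}({\bf x}^{(\iota)})$ to conclude $\nabla f^{(\iota)}({\bf x}^*)={\bf 0}$, and then rewrite the inner-product and squared-gradient terms exactly via the scalars $a_{v_j(k),x^*}\leq L$ and $b_{v_j(k),x^*}\in[0,1]$ (the paper phrases this through unit vectors $\overrightarrow{u},\overrightarrow{v}$, which is equivalent to your ratio definitions). Your explicit treatment of the degenerate cases ${\bf v}_j(k)={\bf x}^*$ or $\nabla f^{(\iota)}({\bf v}_j(k))={\bf 0}$ is a small tidying-up that the paper leaves implicit.
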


\begin{proof}
See Appendix~C for proof. 
\end{proof}

\begin{proposition}\label{Convergence_type1}
Let Assumptions \ref{A3.1}-\ref{A3.6} hold. Let the functions in Assumption~\ref{A3.1} be strongly convex and satisfy  $ f^{(\iota)}( {\bf x}^{*}) = f^{(\iota)}( {\bf x}^{(\iota)}) $ for all $ (\iota) $. Let the sequences $ \{ {\bf x}_{i}(k) \} $ and $ \{ {\bf v}_{i}(k) \} $, $ i \in V  $ be generated by Algorithm~\ref{alg:algorithm-srdo} under scenarios of Division~1 with stepsizes and errors as given in Assumptions \ref{A3.4} and \ref{A3.6}. Assume that problem \eqref{eq1} has a non-empty optimal solution set $ \mathcal{X}^{*} $ as given in Assumption~\ref{A3.1}. Then, the sequences $ \{ {\bf x}_{i}(k) \} $ and $ \{ {\bf v}_{i}(k) \} $, $ i \in V  $ converge to the same random point in $ \mathcal{X}^{*} $ with probability 1.
\end{proposition}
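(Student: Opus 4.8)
The plan is to cast the recursion of Lemma~\ref{type1_error_bound} into the form required by the supermartingale (Robbins--Siegmund) convergence theorem, with Lyapunov function $V_k=\sum_{l=1}^{n}\|{\bf v}_{l}(k)-{\bf x}^{*}\|^{2}$ and the natural filtration $\mathcal{F}_k$ generated by all the randomness (partition selection under Assumption~\ref{A3.5}, straggler and connection patterns, and delays) up to iteration $k$. Since the $f^{(\iota)}$ are strongly convex, $\mathcal{X}^{*}$ is the singleton $\{{\bf x}^{*}\}$, so the target point is deterministic and it suffices to show $V_k\to0$ almost surely and then propagate this to the ${\bf x}_i(k)$ iterates through \eqref{3.43} and (SRDO-3). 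The starting inequality is Lemma~\ref{type1_error_bound} precisely because the hypothesis $f^{(\iota)}({\bf x}^{*})=f^{(\iota)}({\bf x}^{(\iota)})$ places us in Division~1, where that bound (with its favorable $a,b$ term) is valid.

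First I would take the conditional expectation of the inequality in Lemma~\ref{type1_error_bound} given $\mathcal{F}_k$; the weights $\gamma_{(\iota)}$ already encode the expectation over the partition choice. The crucial sign structure lives in the last term, $-2\alpha_k\sum_{j}\sum_{(\iota)}\gamma_{(\iota)}\,a_{v_j(k),x^*}\bigl(b_{v_j(k),x^*}-(1-\gamma_{(0)})\alpha_k a_{v_j(k),x^*}\bigr)\|{\bf v}_j(k)-{\bf x}^{*}\|^{2}$: strong convexity together with the Lipschitz gradient bound \eqref{Lipshconstpart} makes $b_{v_j(k),x^*}$ bounded below by a positive constant and $a_{v_j(k),x^*}$ bounded above, so for $k$ large enough that $(1-\gamma_{(0)})\alpha_k a_{v_j(k),x^*}<b_{v_j(k),x^*}$ this is a genuine negative descent quantity of order $\alpha_k\|{\bf v}_j(k)-{\bf x}^{*}\|^{2}$. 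This is exactly where the free parameter is tuned: with the naive nonexpansiveness choice $a=1$ the coefficient of $\|{\bf v}_j(k)-{\bf x}^{*}\|^{2}$ has the wrong magnitude, whereas the variable-$a$ projection inequality underlying Lemma~\ref{type1_error_bound} lets us force that coefficient negative.

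Next I would control the two error contributions. Bounding $\langle{\bf \epsilon}_{j,r_{(\iota)}}(k),{\bf v}_j(k)-{\bf x}^{*}\rangle$ by Cauchy--Schwarz and inserting the Division~1 estimate $\|{\bf \epsilon}_{j,r_{(\iota)}}(k)\|\le\max_{(\iota)}\|{\bf A}^{r_{(\iota)}}\|_{\infty}\|{\bf B}^{r_{(\iota)}}\|_{2,\infty}\,2L\max_{k-H\le\hat{k}\le k,\,q}\|{\bf v}_q(\hat{k})-{\bf x}^{*}\|$ turns the cross term into a product of a current distance and a delayed maximum distance; I would split it with Young's inequality $2ab\le\eta a^{2}+\eta^{-1}b^{2}$, sending the current-distance part into the descent term and the delayed-maximum part into a windowed remainder. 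The squared-error term is $O(\alpha_k^{2})$ times the same delayed maximum. All the $O(\alpha_k^{2})$ remainders are summable because $\sum_k\alpha_k^{2}<\infty$ by Assumption~\ref{A3.6}, provided the delayed maxima are known to be comparable to $V_k$, which the auxiliary boundedness lemma supplies via $\sup_k V_k<\infty$.

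The main obstacle is precisely this delay coupling: because $\epsilon_{j,r_{(\iota)}}(k)$ is evaluated at stale iterates ${\bf v}_q(k-\Delta k)$ with only the uniform bound $0\le\Delta k\le H$ and \emph{no} statistical assumption, the positive cross term at step $k$ feeds on past iterates over the whole window $[k-H,k]$, so a term-by-term supermartingale inequality in $V_k$ alone is not closed. The resolution I expect to carry out is to absorb the delayed maxima either by passing to a Lyapunov function of the form $\sum_{\hat{k}=k-H}^{k}\rho_{\hat{k}}V_{\hat{k}}$ with suitably decaying weights, or by first invoking the boundedness lemma so that the delayed maxima contribute only a summable $O(\alpha_k^{2})$ perturbation; the recursion then reads $\mathbb{E}[V_{k+1}\mid\mathcal{F}_k]\le(1-\mu)V_k-\xi_k+\zeta_k$ with $\xi_k\ge0$ the strong-convexity descent of order $\alpha_k V_k$ and $\sum_k\zeta_k<\infty$ almost surely. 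The supermartingale theorem then yields almost sure convergence of $V_k$ to some $V_\infty\ge0$ and $\sum_k\xi_k<\infty$; since $\sum_k\alpha_k=\infty$ while $\xi_k\sim\alpha_k V_k$, we must have $V_\infty=0$, i.e. ${\bf v}_l(k)\to{\bf x}^{*}$ for every $l$. Finally, because $\alpha_k\to0$ forces ${\bf R}_{i,r_{(\iota)}}(k)\to0$ in \eqref{3.43} and ${\bf W}(k)$ is row-stochastic, the estimates ${\bf x}_i(k)$ inherit the same limit ${\bf x}^{*}$, giving the claimed almost-sure convergence of both sequences to the single point of $\mathcal{X}^{*}$.
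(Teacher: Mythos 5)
Your first half tracks the paper: you start from Lemma~\ref{type1_error_bound}, and you use strong convexity exactly as the paper does, namely to show $b_{v_j(k),x^*}-(1-\gamma_{(0)})\alpha_k a_{v_j(k),x^*}\ge 0$ for all $k$ beyond some $k^c_1$ (because $\alpha_k\to0$ and $\langle\vec u,\vec v\rangle\ge\sigma_{(\iota)}/a_{v_j(k),x^*}$), so that the last term can simply be dropped. After that, however, your argument has a genuine gap in how the delayed error is absorbed. The cross term $2\alpha_k\sum_j\sum_{(\iota)}\gamma_{(\iota)}\langle{\bf\epsilon}_{j,r_{(\iota)}}(k),{\bf v}_j(k)-{\bf x}^*\rangle$ is, after inserting the Division~1 bound on $\|{\bf\epsilon}\|$ and Young's inequality, of order $\alpha_k\max_{k-H\le\hat k\le k,\,q}\|{\bf v}_q(\hat k)-{\bf x}^*\|^2$ --- it carries a single power of $\alpha_k$, not $\alpha_k^2$. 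Consequently it is \emph{not} a summable perturbation even if you first establish $\sup_k V_k<\infty$ (you would get $\sum_k\alpha_k\cdot D=\infty$), so your resolution (b) fails; and it is not dominated by the strong-convexity descent $-c\,\alpha_k V_k$ either, because the windowed maximum satisfies $\max_{[k-H,k]}V\ge V_k$ and the coding-scheme constant $4pL\max_{(\iota)}\|{\bf A}^{r_{(\iota)}}\|_\infty\|{\bf B}^{r_{(\iota)}}\|_{2,\infty}$ need not be smaller than $c$. So the recursion you write, $\mathbb{E}[V_{k+1}\mid\mathcal{F}_k]\le(1-\mu)V_k-\xi_k+\zeta_k$ with $\sum_k\zeta_k<\infty$, is not actually obtained, and the Robbins--Siegmund step does not go through as stated.

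The paper closes this gap differently: the contraction does not come from the $\alpha_k V_k$ descent at all but from the strict row-sub-stochasticity of ${\bf W}(k)$ (Assumption~\ref{A3.3}(a) gives the factor $1-\mu<1$). After deleting the negative last term, inequality \eqref{ineq_type1_error_bound_subst} has exactly the form $v_{k+1}\le a_1v_k+a_{2,k}\max_{k-H\le\hat k\le k}v_{\hat k}$ with $a_1=1-\mu$ and $a_{2,k}=O(\alpha_k)$, and since $\alpha_k\to0$ one eventually has $a_1+a_{2,k}\le1$; the deterministic delayed-recursion Lemma~\ref{Lemma6a} then yields the geometric bound $\sum_i\|{\bf v}_i(k)-{\bf x}^*\|^2\le\rho_1^kV'_0$, from which both conclusions follow. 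Your alternative (a) --- a weighted Lyapunov functional over the window $[k-H,k]$ --- is the right idea and is essentially what Lemma~\ref{Lemma6a} formalizes, but you would need to carry it out; as written, the proof is incomplete at precisely the delay-coupling step you yourself flag as the main obstacle. (A minor point: the $a_{v_j(k),x^*},b_{v_j(k),x^*}$ in Lemma~\ref{type1_error_bound} arise from the decomposition $\nabla f^{(\iota)}({\bf v}_j(k))-\nabla f^{(\iota)}({\bf x}^*)=a\|{\bf v}_j(k)-{\bf x}^*\|\vec u$, not from a tunable projection inequality.)
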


\begin{proof}

With errors as in Assumption~\ref{A3.4} we have $ \|R_{i}(k)\|$ and $\|{\bf \epsilon}_{j,r_{(\iota)}}(k)\|$ as given in Appendix or Section~\ref{app:evaluateR} for Division~1 scenarios.
Then having $ f^{(\iota)}( {\bf x}^{*}) = f^{(\iota)}( {\bf x}^{(\iota)}) $ for all $ (\iota) $ also then we have Lemma~\ref{type1_error_bound} satisfied. Then we can use  the resulting inequality \eqref{ineq_type1_error_bound} with the substitution of $ \| {\bf \epsilon}_{j,r_{(\iota)}}(k) \| $ from Appendix to get % which is valid for $ k \geq k_{0} $ where $ k_{0} $ is dependent on $ f^{(\iota)}( {\bf x}^{(\iota)}) $ or $ f^{(\iota)}( {\bf x}^{*}) $ for all $ (\iota) $ and is as in lemma, that is
{ \begin{equation}\label{ineq_type1_error_bound_subst}
\begin{split}
& \sum_{l=1}^{n}\|  {\bf v}_{l}(k+1) -  {\bf x}^{*}\|^{2}  \leq (1-\mu)\sum_{j=1}^{n}\| {\bf v}_{j}(k) - {\bf x}^{*}\|^{2}  \\
& + 4 p L \alpha_{k} \sum_{j=1}^{n} \max_{(\iota)}
\| {\bf A}^{r_{(\iota)}} \| _{\infty} \|{\bf B}^{r_{(\iota)}}\| _{2,\infty}  \max_{k-H \leq \hat{k} \leq k ; q \in V} \| {\bf v}_{q}(\hat{k}) -  {\bf x}^{*}\|^{2}   \\
& + 8 p (1 -\gamma_{(0)}) L^{2} \alpha_{k}^{2} \sum_{j=1}^{n} \max_{(\iota)} \| {\bf A}^{r_{(\iota)}} \| ^{2} _{\infty} \|{\bf B}^{r_{(\iota)}}\|^{2} _{2,\infty} \max_{k-H \leq \hat{k} \leq k ; q \in V } \| {\bf v}_{q}(\hat{k}) -  {\bf x}^{*}\|^{2} \\
& - 2 \alpha_{k} \sum_{j=1}^{n}\sum_{(\iota)=1}^{p}\gamma_{(\iota)} a_{v_{j}(k),x^{*}}(b_{v_{j}(k),x^{*}} -  (1 -\gamma_{(0)}) \alpha_{k} a_{v_{j}(k),x^{*}} ) \| {\bf v}_{j}(k) - {\bf x}^{*} \| ^{2}    
\end{split}
\end{equation}}
To prove convergence we employ Lemma~\ref{Lemma6a}. But in order to be able to use Lemma~\ref{Lemma6a} the last term in \eqref{ineq_type1_error_bound_subst} should be negative so that it can be deleted from the inequality. Which means $ b_{v_{j}(k),x^{*}} - (1-\gamma_{(0)}) \alpha_{k} a_{v_{j}(k),x^{*}}  \geq 0 $ where $ b_{v_{j}(k),x^{*}} = \langle \overrightarrow{u},\overrightarrow{v} \rangle $. And $ \nabla{f}^{(\iota)}({\bf v}_{j}(k)) - \nabla{f}^{(\iota)}({\bf x}^{*}) = a_{v_{j}(k),x^{*}} \| {\bf v}_{j}(k) - {\bf x}^{*} \| \overrightarrow{u} $ where $ a_{v_{j}(k),x^{*}} \leq L $.
However, $ f^{(\iota)} $ is strongly convex for every $ (\iota) $, then $ \langle \nabla{f}^{(\iota)}({\bf v}_{j}(k)) - \nabla{f}^{(\iota)}({\bf x}^{*}), {\bf v}_{j}(k) - {\bf x}^{*} \rangle  = a_{v_{j}(k),x^{*}} \| {\bf v}_{j}(k) - {\bf x}^{*} \| ^{2} \langle \overrightarrow{u},\overrightarrow{v} \rangle  \geq \sigma_{(\iota)} ^{2} \| {\bf v}_{j}(k) - {\bf x}^{*} \| ^{2} $, that is $ \langle \overrightarrow{u},\overrightarrow{v} \rangle \geq \frac{\sigma_{(\iota)}}{a_{v_{j}(k),x^{*}}} $. 
Therefore, a sufficient condition is 
\begin{equation}\label{suffcond1}
\begin{split}
 (1-\gamma_{(0)}) \alpha_{k} a_{v_{j}(k),(\iota)} \leq  (1-\gamma_{(0)}) \alpha_{k} L  \leq \frac{\sigma_{(\iota)}}{a_{v_{j}(k),(\iota)}} = \langle \overrightarrow{u},\overrightarrow{v} \rangle 
\end{split}
\end{equation}
The sufficient condition in \eqref{suffcond1} is satisfied for $ k \geq k^{c}_{1} $ since $ \alpha_{k} \rightarrow 0 $.
Then by deleting the last term of \eqref{ineq_type1_error_bound_subst} it will be similar to the martingale inequality \eqref{eqw} of Lemma~\ref{Lemma6a} for $ k \geq k^{*}_{1}= \max(k^{c}_{1},k^{+}_{1},H+1) $ where $ k^{+}_{1} $ is the iteration at which $ 0 < (1-\mu)  + 4 L \alpha_{k} \max\limits_{(\iota)} \| {\bf A}^{r_{(\iota)}} \| _{\infty} \|{\bf B}^{r_{(\iota)}}\| _{2,\infty}  + 8 (1-\gamma_{(0)}) L^{2} \alpha_{k}^{2} \max\limits_{(\iota)} \| {\bf A}^{r_{(\iota)}} \| ^{2} _{\infty} \|{\bf B}^{r_{(\iota)}}\|^{2} _{2,\infty}  < 1 $ for the first time.

 By the result of Lemma~\ref{Lemma6a} we have for $ v_{k}= \sum_{i=1}^{n} \| {\bf v}_{i}(k) - {\bf x}^{*} \|^{2} $ that
\begin{equation}
\begin{split}
    \sum_{i=1}^{n} \| {\bf v}_{i}(k) - {\bf x}^{*} \|^{2} \leq \rho_{1} ^{k}  V^{'}_{0} 
\end{split}    
\end{equation}
for $ k \geq \bar{k}_{1}+H+1 $ where $ \rho_{1}=\rho $, $ V^{'}_{0}=V_{0} $, $ H = B $ and $ \bar{k}_{1}=\bar{k}$ and $ k^{*}_{1}= k^{*} $ are as in the lemma, i.e., take $ \bar{k}_{1}=k^{*}_{1} - 1 $. Therefore, as $ k \rightarrow \infty $ we have $ \sum_{i=1}^{n} \| {\bf v}_{i}(k) - {\bf x}^{*} \|^{2} \rightarrow 0 $. That is, $  \| {\bf v}_{i}(k) - {\bf x}^{*} \| \rightarrow 0 $ for all $ i \in V $. 
Then in view of (SRDO-2) where $ {\bf x}_{i}(k+1) = {\bf v}_{i}(k) -\alpha_{k}\nabla{f}^{(\iota)}({\bf v}_{i}(k)) + {\bf R}_{i,r_{(\iota)}}(k) $ and since ${\bf R}_{i,r_{(\iota)}}(k) \rightarrow 0 $ because  $  \| {\bf v}_{i}(k) - {\bf x}^{*} \| \rightarrow 0 $, $ \alpha_{k} \rightarrow 0 $ and $ \alpha_{k} \nabla{f}^{(\iota)}({\bf v}_{i}(k))\rightarrow 0 $ since $ \alpha_{k} \rightarrow 0 $   where $ \nabla{f}^{(\iota)}({\bf v}_{i}(k)) \leq G_{f} $ since $ \nabla{f}^{(\iota)}({\bf v}_{i}(k))=  \nabla{f}^{(\iota)}({\bf v}_{i}(k))-  \nabla{f}^{(\iota)}({\bf x}^{(\iota)}) \leq L \| {\bf v}_{i}(k)- {\bf x}^{*} \| + L \| {\bf x}^{*} - {\bf x}^{(\iota)} \| $, $\| {\bf x}^{*} - {\bf x}^{(\iota)} \| =const. $ and ${\bf v}_{i}(k) \rightarrow {\bf x}^{*} $. Therefore, $ {\bf x}_{i}(k+1) = {\bf v}_{i}(k) = {\bf x}^{*} $ as $ k \rightarrow \infty $. 

\end{proof}

Proposition~\ref{Convergence_type1} follows from Proposition~\ref{Convergence_type1a} We explicitly present Proposition~\ref{Convergence_type1} to delineate the different convergence rate for the case with strongly convex functions.

\begin{proposition}\label{Convergence_type1a}
Let Assumptions \ref{A3.1}-\ref{A3.6} hold. Let the functions in Assumption~\ref{A3.1}  satisfy  $ f^{(\iota)}( {\bf x}^{*}) = f^{(\iota)}( {\bf x}^{(\iota)}) $ for all $ (\iota) $. Let the sequences $ \{ {\bf x}_{i}(k) \} $ and $ \{ {\bf v}_{i}(k) \} $, $ i \in V  $ be generated by Algorithm~\ref{alg:algorithm-srdo} under scenarios of Division~1 with stepsizes and errors as given in Assumptions \ref{A3.4} and \ref{A3.6}. Assume that problem \eqref{eq1} has a non-empty optimal solution set $ \mathcal{X}^{*} $ as given in Assumption \ref{A3.1}. Then, the sequences $ \{ {\bf x}_{i}(k) \} $ and $ \{ {\bf v}_{i}(k) \} $, $ i \in V  $ converge to the same random point in $ \mathcal{X}^{*} $ with probability 1.
\end{proposition}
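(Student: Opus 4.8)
The plan is to run the same machinery as in Proposition~\ref{Convergence_type1}, but to supply the one place where that proof invoked strong convexity by an argument using \emph{only} convexity together with the hypothesis $f^{(\iota)}({\bf x}^{*}) = f^{(\iota)}({\bf x}^{(\iota)})$. Accordingly, I would start from the general recursion \eqref{ineq_general_error_bound} of Lemma~\ref{general_error_bound} rather than from Lemma~\ref{type1_error_bound}, since the latter's $a_{v_{j}(k),x^{*}},b_{v_{j}(k),x^{*}}$ splitting was engineered specifically to exploit a strong-convexity lower bound. Throughout I write $V_{k}=\sum_{i=1}^{n}\|{\bf v}_{i}(k)-{\bf x}^{*}\|^{2}$ and work under Assumptions~\ref{A3.1}--\ref{A3.6} with the Division~1 error estimates of Section~\ref{app:evaluateR}.

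The crucial step is the treatment of the inner-product term on the second line of \eqref{ineq_general_error_bound}. The hypothesis $f^{(\iota)}({\bf x}^{*}) = f^{(\iota)}({\bf x}^{(\iota)}) = \min_{\bf x} f^{(\iota)}({\bf x})$ forces ${\bf x}^{*}$ to minimize every partition function, so that $\nabla f^{(\iota)}({\bf x}^{*})=0$ and $f^{(\iota)}({\bf x}^{*}) \leq f^{(\iota)}({\bf v}_{j}(k))$ for all $j,k$. The first-order convexity inequality then gives $\langle \nabla f^{(\iota)}({\bf v}_{j}(k)), {\bf x}^{*}-{\bf v}_{j}(k)\rangle \leq f^{(\iota)}({\bf x}^{*})-f^{(\iota)}({\bf v}_{j}(k)) \leq 0$, so the whole $2\alpha_{k}\sum_{j}\sum_{(\iota)}\gamma_{(\iota)}\langle\cdot\rangle$ term is nonpositive and may be discarded. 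This is exactly where strong convexity was used in Proposition~\ref{Convergence_type1}, and it is the only place, so its removal is what makes the statement hold in the merely-convex regime.

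Next I would bound the two surviving error contributions. Using $\nabla f^{(\iota)}({\bf x}^{*})=0$ and the Lipschitz bound \eqref{Lipshconstpart}, $\|\nabla f^{(\iota)}({\bf v}_{j}(k))\|^{2} \leq L^{2}\|{\bf v}_{j}(k)-{\bf x}^{*}\|^{2}$. For the cross term I would apply Cauchy--Schwarz and then $2ab\leq a^{2}+b^{2}$, and insert the Division~1 bounds on $\|{\bf \epsilon}_{j,r_{(\iota)}}(k)\|$ and $\|{\bf \epsilon}_{j,r_{(\iota)}}(k)\|^{2}$ from Section~\ref{app:evaluateR}, which convert everything into multiples of $\|{\bf v}_{j}(k)-{\bf x}^{*}\|^{2}$ and of $\max_{k-H\leq\hat{k}\leq k}\|{\bf v}_{q}(\hat{k})-{\bf x}^{*}\|^{2}$. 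Summing over $j$ and $(\iota)$ and absorbing $\sum_{(\iota)}\gamma_{(\iota)}\leq 1$ collapses the whole inequality to a scalar delayed recursion
\[ V_{k+1} \leq (1-\mu)\,V_{k} + \left(C_{1}\alpha_{k}+C_{2}\alpha_{k}^{2}\right)\max_{k-H\leq\hat{k}\leq k}V_{\hat{k}}, \]
with $C_{1},C_{2}$ built from $L$, $p$, $(1-\gamma_{(0)})$ and $\max_{(\iota)}\|{\bf A}^{r_{(\iota)}}\|_{\infty}\|{\bf B}^{r_{(\iota)}}\|_{2,\infty}$, which is precisely the hypothesis of Lemma~\ref{Lemma6a}.

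Finally, since $\alpha_{k}\to 0$ there is a threshold iteration past which $C_{1}\alpha_{k}+C_{2}\alpha_{k}^{2}<\mu$, so the effective per-step coefficient falls below one and Lemma~\ref{Lemma6a} yields $V_{k}\to 0$ with probability one, i.e. $\|{\bf v}_{i}(k)-{\bf x}^{*}\|\to 0$ for every $i$. To pass to the iterates I would use (SRDO-2): because $\|{\bf v}_{i}(k)-{\bf x}^{*}\|\to 0$, $\alpha_{k}\to 0$, $\|\nabla f^{(\iota)}({\bf v}_{i}(k))\|\leq L\|{\bf v}_{i}(k)-{\bf x}^{*}\|$ is bounded, and ${\bf R}_{i,r_{(\iota)}}(k)\to 0$ by its Division~1 error bound, so ${\bf x}_{i}(k+1)\to{\bf x}^{*}$ as well, with the common limit lying in $\mathcal{X}^{*}$. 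The main obstacle is not any single estimate, which are all routine Lipschitz/Cauchy--Schwarz manipulations already performed in Section~\ref{app:evaluateR}, but the bookkeeping of folding the delayed maxima $\max_{k-H\leq\hat{k}\leq k}$ into one scalar recursion and verifying the stepsize threshold so that Lemma~\ref{Lemma6a} applies; the conceptual hinge, by contrast, is the clean convexity-plus-equal-minima argument that lets the gradient term be dropped without strong convexity.
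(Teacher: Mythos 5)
Your proposal is correct and follows essentially the same route as the paper: the paper also reduces to a delayed scalar recursion and invokes Lemma~\ref{Lemma6a}, and its key move in \eqref{ineq_s2} --- discarding the nonpositive $-2\alpha_{k}\gamma_{(\iota)}a_{v_{j}(k),x^{*}}b_{v_{j}(k),x^{*}}\|{\bf v}_{j}(k)-{\bf x}^{*}\|^{2}$ part while retaining the $O(\alpha_{k}^{2})$ part bounded via $a^{2}_{v_{j}(k),x^{*}}\leq L^{2}$ --- is exactly your observation that plain convexity plus $\nabla f^{(\iota)}({\bf x}^{*})=0$ makes the gradient inner-product term nonpositive, so strong convexity is never needed. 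Your choice to start from Lemma~\ref{general_error_bound} rather than Lemma~\ref{type1_error_bound} is only a cosmetic repackaging, since the latter is the former rewritten in the $a,b$ parametrization.
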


\begin{proof}
With errors as in Assumption~\ref{A3.4} we have $ \|R_{i}(k)\|$ and $\|{\bf \epsilon}_{j,r_{(\iota)}}(k)\|$ as given in Appendix or Section~\ref{app:evaluateR} for Division~1 scenarios.
Then having $ f^{(\iota)}( {\bf x}^{*}) = f^{(\iota)}( {\bf x}^{(\iota)}) $ for all $ (\iota) $ also satisfied we have Lemma~\ref{type1_error_bound} satisfied. Then we can use  the resulting inequality \eqref{ineq_type1_error_bound} with the substitution of $ \| {\bf \epsilon}_{j,r_{(\iota)}}(k) \| $ from Appendix to get % which is valid for $ k \geq k_{0} $ where $ k_{0} $ is dependent on $ f^{(\iota)}( {\bf x}^{(\iota)}) $ or $ f^{(\iota)}( {\bf x}^{*}) $ for all $ (\iota) $ and is as in lemma, that is
{ \begin{equation}\label{ineq_s1}
\begin{split}
& \sum_{l=1}^{n}\|  {\bf v}_{l}(k+1) -  {\bf x}^{*}\|^{2}  \leq (1-\mu)\sum_{j=1}^{n}\| {\bf v}_{j}(k) - {\bf x}^{*}\|^{2}  \\
& + 4 p L \alpha_{k} \sum_{j=1}^{n} \max_{(\iota)}
\| {\bf A}^{r_{(\iota)}} \| _{\infty} \|{\bf B}^{r_{(\iota)}}\| _{2,\infty}  \max_{k-H \leq \hat{k} \leq k ; q \in V} \| {\bf v}_{q}(\hat{k}) -  {\bf x}^{*}\|^{2}   \\
& + 8 p (1-\gamma_{(0)}) L^{2} \alpha_{k}^{2} \sum_{j=1}^{n} \max_{(\iota)} \| {\bf A}^{r_{(\iota)}} \| ^{2} _{\infty} \|{\bf B}^{r_{(\iota)}}\|^{2} _{2,\infty} \max_{k-H \leq \hat{k} \leq k ; q \in V } \| {\bf v}_{q}(\hat{k}) -  {\bf x}^{*}\|^{2} \\
& - 2 a_{v_{j}(k),x^{*}} \alpha_{k} \sum_{j=1}^{n}\sum_{(\iota)=1}^{p}\gamma_{(\iota)} (b_{v_{j}(k),x^{*}} - (1-\gamma_{(0)})\alpha_{k} a_{v_{j}(k),x^{*}} \| {\bf v}_{j}(k) - {\bf x}^{*} \| ^{2}   
\end{split}
\end{equation}}
But in order to be able to use Lemma~\ref{Lemma6a} on \eqref{ineq_s1} with the use of the convexity of $ f^{(\iota)} $ only, we must take the negative part of the last term of the RHS of the above inequality so that we have

{ \begin{equation}\label{ineq_s2}
\begin{split}
& \sum_{l=1}^{n}\|  {\bf v}_{l}(k+1) -  {\bf x}^{*}\|^{2}  \leq (1-\mu)\sum_{j=1}^{n}\| {\bf v}_{j}(k) - {\bf x}^{*}\|^{2}  \\
& + 4 L \alpha_{k} \sum_{j=1}^{n} \max_{(\iota)}
\| {\bf A}^{r_{(\iota)}} \| _{\infty} \|{\bf B}^{r_{(\iota)}}\| _{2,\infty}  \max_{k-H \leq \hat{k} \leq k ; q \in V} \| {\bf v}_{q}(\hat{k}) -  {\bf x}^{*}\|^{2}   \\
&+ 8 p (1-\gamma_{(0)}) L^{2} \alpha_{k}^{2} \sum_{j=1}^{n} \max_{(\iota)} \| {\bf A}^{r_{(\iota)}} \| ^{2} _{\infty} \|{\bf B}^{r_{(\iota)}}\|^{2} _{2,\infty} \max_{k-H \leq \hat{k} \leq k ; q \in V } \| {\bf v}_{q}(\hat{k}) -  {\bf x}^{*}\|^{2} \\
&+ 2 (1-\gamma_{(0)})\sum_{j=1}^{n}\sum_{(\iota)=1}^{p}\gamma_{(\iota)} \alpha_{k}^{2} a^{2}_{v_{j}(k),x^{*}}\max_{k-H \leq \hat{k} \leq k ; q \in V }\| {\bf v}_{q}(\hat{k}) - {\bf x}^{*} \| ^{2}   
\end{split}
\end{equation}}

Then \eqref{ineq_s2} is similar to the martingale inequality \eqref{eqw} of Lemma~\ref{Lemma6a} for $ k \geq k_{2}^{*} $ where $ k^{*}_{2} $ is the iteration at which $ 
   0 < (1-\mu)+ 4 L \alpha_{k} \max_{(\iota)}
\| {\bf A}^{r_{(\iota)}} \| _{\infty} \|{\bf B}^{r_{(\iota)}}\| _{2,\infty}  + 8 (1-\gamma_{(0)}) L^{2} \alpha_{k}^{2} \max_{(\iota)} \| {\bf A}^{r_{(\iota)}} \| ^{2} _{\infty} \|{\bf B}^{r_{(\iota)}}\|^{2} _{2,\infty} + 2 (1-\gamma_{(0)}) \alpha_{k}^{2}L^{2} < 1 $ for the first time (i.e., since $ a^{2}_{v_{j}(k),x^{*}} \leq L^{2} $).

By the result of Lemma~\ref{Lemma6a} we have for $ v_{k}= \sum_{i=1}^{n} \| {\bf v}_{i}(k) - {\bf x}^{*} \|^{2} $ that
\begin{equation}
\begin{split}
    \sum_{i=1}^{n} \| {\bf v}_{i}(k) - {\bf x}^{*} \|^{2} \leq \rho_{2} ^{k}  V_{0}^{''} 
\end{split}    
\end{equation}
for $ k \geq \bar{k}_{2} + H + 1 $ where $ \rho_{2}=\rho $, $ V_{0}^{''}=V_{0} $, $ H = B $ and $ \bar{k}_{2}=\bar{k} $ and $ k^{*}_{2}=k^{*} $ are as in the lemma, i.e., take $ \bar{k}_{2}=\max(k^{*}_{2}-1,B) $. Therefore, as $ k \rightarrow \infty $ we have $ \sum_{i=1}^{n} \| {\bf v}_{i}(k) - {\bf x}^{*} \|^{2} \rightarrow 0 $. That is, $  \| {\bf v}_{i}(k) - {\bf x}^{*} \| \rightarrow 0 $ for all $ i \in V $. 
Then in view of (SRDO-2) where $ {\bf x}_{i}(k+1) = {\bf v}_{i}(k) -\alpha_{k}\nabla{f}^{(\iota)}({\bf v}_{i}(k)) + {\bf R}_{i,r_{(\iota)}}(k) $ and since ${\bf R}_{i,r_{(\iota)}}(k) \rightarrow 0 $ because  $  \| {\bf v}_{i}(k) - {\bf x}^{*} \| \rightarrow 0 $, $ \alpha_{k} \rightarrow 0 $ and $ \alpha_{k} \nabla{f}^{(\iota)}({\bf v}_{i}(k))\rightarrow 0 $ since $ \alpha_{k} \rightarrow 0 $   where $ \nabla{f}^{(\iota)}({\bf v}_{i}(k)) \leq G_{f} $ since $ \nabla{f}^{(\iota)}({\bf v}_{i}(k))=  \nabla{f}^{(\iota)}({\bf v}_{i}(k))-  \nabla{f}^{(\iota)}({\bf x}^{(\iota)}) \leq L \| {\bf v}_{i}(k)- {\bf x}^{*} \| + L \| {\bf x}^{*} - {\bf x}^{(\iota)} \| $, $\| {\bf x}^{*} - {\bf x}^{(\iota)} \| =const. $ and ${\bf v}_{i}(k) \rightarrow {\bf x}^{*} $. Therefore, $ {\bf x}_{i}(k+1) = {\bf v}_{i}(k) = {\bf x}^{*} $ as $ k \rightarrow \infty $.

\end{proof}

\begin{fact}
Note that Proposition~\ref{Convergence_type1} and \ref{Convergence_type1a} are valid under Scenarios 1 or 3. However, under Scenario~2 they are valid if Condition~\ref{Condition-1} is satisfied. That is, under Scenario~2 usually at the beginning of the algorithm process or always if $ {\bf x}^{\lambda,r_{(\iota)}} = {\bf x}^{*} $ for all replica $ r_{(\iota)} $ of all partitions $ (\iota) $. Therefore, we have the following two corollaries.
\end{fact}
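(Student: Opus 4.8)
The plan is to establish the Fact by tracing the hypotheses of Proposition~\ref{Convergence_type1} and Proposition~\ref{Convergence_type1a} back to the scenario partition and to the error estimates of Section~\ref{app:evaluateR}. Both propositions were proved \emph{under scenarios of Division~1}, so the Fact amounts to identifying which of Scenario~1, 2, 3 lie in Division~1 and explaining why membership in Division~1 is exactly what the convergence argument consumes.

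First I would recall the partition introduced after Condition~\ref{Condition-1}: Division~1 consists of Scenario~1, Scenario~3, and Scenario~2 \emph{when Condition~\ref{Condition-1} holds}, whereas Division~2 is Scenario~2 \emph{when Condition~\ref{Condition-1} fails}. Next I would point to the dichotomy of bounds on $\|{\bf \epsilon}_{j,r_{(\iota)}}(k)\|$ derived in Section~\ref{app:evaluateR} (Appendix~A): for Division~1 scenarios the bound is homogeneous in the distance to the optimum, i.e.\ proportional to $\max_{k-H\le\hat k\le k}\|{\bf v}_q(\hat k)-{\bf x}^{*}\|$ with no additive constant, while for Division~2 it carries the extra term $L\max_{(\iota),\lambda}\|{\bf x}^{*}-{\bf x}^{\lambda,r_{(\iota)}}\|$. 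The proofs of both propositions substitute precisely the homogeneous Division~1 bound into the recursion and then invoke Lemma~\ref{Lemma6a}; the homogeneity is what lets the error terms be folded into the contraction factor so that $\sum_i\|{\bf v}_i(k)-{\bf x}^{*}\|^2\to 0$. Since Scenarios~1 and~3 always produce the homogeneous bound, the propositions hold for them unconditionally, which gives the first assertion of the Fact.

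For the second assertion I would observe that Scenario~2 produces the homogeneous bound if and only if Condition~\ref{Condition-1} is satisfied: indeed, the Scenario~2 estimate in Section~\ref{app:evaluateR} absorbs the defect $\|{\bf x}^{*}-{\bf x}^{\lambda,r_{(\iota)}}\|$ into $\max_{k-H\le\hat k\le k}\|{\bf v}_q(\hat k)-{\bf x}^{*}\|$ exactly when $\max_{(\iota),\lambda}\|{\bf x}^{*}-{\bf x}^{\lambda,r_{(\iota)}}\|\le\max_{k-H\le\hat k\le k}\|{\bf v}_q(\hat k)-{\bf x}^{*}\|$, i.e.\ under Condition~\ref{Condition-1}. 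Hence under Scenario~2 the propositions are valid precisely when Condition~\ref{Condition-1} holds. The two concluding clauses then follow from the Remark after Condition~\ref{Condition-1}: the left side of the condition is a fixed constant while the right side $\max_{k-H\le\hat k\le k}\|{\bf v}_q(\hat k)-{\bf x}^{*}\|$ is large while the iterates are still far from ${\bf x}^{*}$, so the condition holds at the start of the run; and if every subpartition minimizer coincides with the global one, ${\bf x}^{\lambda,r_{(\iota)}}={\bf x}^{*}$, the left side is zero and the condition holds for all $k$.

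The main obstacle is making rigorous the claim that Division~2 genuinely lies outside the scope of these propositions rather than merely failing a convenient sufficient condition: one must argue that when the additive constant $L\max_{(\iota),\lambda}\|{\bf x}^{*}-{\bf x}^{\lambda,r_{(\iota)}}\|$ is present it cannot be absorbed into the contraction and therefore blocks the application of Lemma~\ref{Lemma6a}, which is why Scenario~2 without Condition~\ref{Condition-1} must be deferred to the separate, corollary-level analysis. A secondary caveat to flag is that the phrase ``usually at the beginning of the algorithm process'' is heuristic---it reflects that the right-hand side of Condition~\ref{Condition-1} is typically large initially---rather than a deterministic guarantee, so that clause should be presented as a qualitative observation supported by the Remark, not as an exact inequality holding for a prescribed range of $k$.
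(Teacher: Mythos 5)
Your proposal is correct and follows essentially the same (implicit) reasoning as the paper: the Fact is an immediate unpacking of the definition of Division~1 together with the hypotheses of Propositions~\ref{Convergence_type1} and~\ref{Convergence_type1a}, and your identification of the homogeneous versus additive-constant error bounds in Section~\ref{app:evaluateR} as the operative distinction is exactly what the paper's convergence arguments rely on. The paper states the Fact without proof, so your reconstruction (including the honest caveat that the ``only if'' direction and the ``usually at the beginning'' clause are heuristic rather than rigorously established) is a faithful and slightly more careful version of the intended justification.
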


\begin{corollary}\label{Convergence_type1}
Let Assumptions \ref{A3.1}-\ref{A3.6} hold. Let the functions in Assumption~\ref{A3.1} be strongly convex and satisfy $ {\bf x}^{\lambda,r_{(\iota)}} = {\bf x}^{*} $ for all replica $ r_{(\iota)} $ of all partitions $ (\iota) $. Let the sequences $ \{ {\bf x}_{i}(k) \} $ and $ \{ {\bf v}_{i}(k) \} $, $ i \in V  $ be generated by Algorithm~\ref{alg:algorithm-srdo} under Scenario~2 with stepsizes and errors as given in Assumptions \ref{A3.4} and \ref{A3.6}. Assume that problem \eqref{eq1} has a non-empty optimal solution set $ \mathcal{X}^{*} $ as given in Assumption~\ref{A3.1}. Then, the sequences $ \{ {\bf x}_{i}(k) \} $ and $ \{ {\bf v}_{i}(k) \} $, $ i \in V  $ converge to the same random point in $ \mathcal{X}^{*} $ with probability 1.
\end{corollary}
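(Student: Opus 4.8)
The plan is to reduce this corollary to Proposition~\ref{Convergence_type1} (the strongly convex case) by showing that the hypothesis $x^{\lambda,r_{(\iota)}}=x^{*}$ forces Scenario~2 into Division~1. First I would observe that if $x^{\lambda,r_{(\iota)}}=x^{*}$ for every subpartition $\lambda$ of every replica $r_{(\iota)}$ of every partition $(\iota)$, then $\max_{(\iota),\lambda}\|{\bf x}^{*}-{\bf x}^{\lambda,r_{(\iota)}}\|=0$. Because the right-hand side $\max_{k-H\leq\hat{k}\leq k}\|{\bf v}_{q}(\hat{k})-{\bf x}^{*}\|$ of Condition~\ref{Condition-1} is always nonnegative, Condition~\ref{Condition-1} is satisfied at every iteration $k$, not merely initially. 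Hence, by the way we partitioned the scenarios, Scenario~2 under this hypothesis lies in Division~1, precisely the case covered by the convergence propositions.

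Next I would verify the remaining hypothesis of Proposition~\ref{Convergence_type1}, namely $f^{(\iota)}({\bf x}^{*})=f^{(\iota)}({\bf x}^{(\iota)})$ for all $(\iota)$. Since each $f^{r_{(\iota)}}_{\lambda}$ attains its minimum at $x^{\lambda,r_{(\iota)}}={\bf x}^{*}$, we have $\nabla f^{r_{(\iota)}}_{\lambda}({\bf x}^{*})=0$ for all $\lambda$, so that $\nabla f^{(\iota)}({\bf x}^{*})=\sum_{\lambda=1}^{n_{r_{(\iota)}}}\nabla f^{r_{(\iota)}}_{\lambda}({\bf x}^{*})=0$. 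Strong convexity of $f^{(\iota)}$ guarantees a unique minimizer, whence ${\bf x}^{(\iota)}={\bf x}^{*}$ and therefore $f^{(\iota)}({\bf x}^{*})=f^{(\iota)}({\bf x}^{(\iota)})$, exactly as required.

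With both prerequisites in place, the Division~1 bounds on $\|{\bf R}_{j,r_{(\iota)}}(k)\|$ and $\|{\bf \epsilon}_{j,r_{(\iota)}}(k)\|$ established in Section~\ref{app:evaluateR} hold, and Lemma~\ref{type1_error_bound} applies, yielding inequality \eqref{ineq_type1_error_bound}. I would then invoke Proposition~\ref{Convergence_type1} verbatim: strong convexity supplies the sufficient condition \eqref{suffcond1} that lets us discard the last (nonpositive) term, reducing the recursion to the supermartingale form \eqref{eqw} of Lemma~\ref{Lemma6a}; this gives $\sum_{i=1}^{n}\|{\bf v}_{i}(k)-{\bf x}^{*}\|^{2}\to 0$ with probability~$1$, and then (SRDO-2) together with ${\bf R}_{i,r_{(\iota)}}(k)\to 0$ forces ${\bf x}_{i}(k)\to{\bf x}^{*}$.

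The only point needing care is confirming that Condition~\ref{Condition-1} persists across all iterations rather than holding merely at the start; under the present hypothesis this is automatic, since its left-hand side is identically zero. Thus there is no genuine obstacle, and the corollary is a direct specialization of Proposition~\ref{Convergence_type1} to the subcase in which every subpartition minimizer coincides with the global optimizer.
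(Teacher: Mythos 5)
Your proposal is correct and takes essentially the same route as the paper: the paper's proof is the one-line observation that the corollary "follows from Proposition~\ref{Convergence_type1} and the condition $ {\bf x}^{\lambda,r_{(\iota)}} = {\bf x}^{*} $," and your argument simply fills in the details of why that reduction works (Condition~\ref{Condition-1} holds identically since its left-hand side vanishes, so Scenario~2 lands in Division~1, and $ f^{(\iota)}({\bf x}^{*}) = f^{(\iota)}({\bf x}^{(\iota)}) $ follows from $ \nabla f^{(\iota)}({\bf x}^{*}) = 0 $ plus strong convexity). No gaps.
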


\begin{proof}
Follows from Proposition~\ref{Convergence_type1} and condition $ {\bf x}^{\lambda,r_{(\iota)}} = {\bf x}^{*} $. 
\end{proof}

\begin{corollary}\label{Convergence_type1a}
Let Assumptions \ref{A3.1}-\ref{A3.6} hold. Let the functions in Assumption~\ref{A3.1}  satisfy $ {\bf x}^{\lambda,r_{(\iota)}} = {\bf x}^{*} $ for all replica $ r_{(\iota)} $ of all partitions $ (\iota) $. Let the sequences $ \{ {\bf x}_{i}(k) \} $ and $ \{ {\bf v}_{i}(k) \} $, $ i \in V  $ be generated by Algorithm~\ref{alg:algorithm-srdo} under Scenario~2 with stepsizes and errors as given in Assumptions \ref{A3.4} and \ref{A3.6}. Assume that problem \eqref{eq1} has a non-empty optimal solution set $ \mathcal{X}^{*} $ as given in Assumption~\ref{A3.1}. Then, the sequences $ \{ {\bf x}_{i}(k) \} $ and $ \{ {\bf v}_{i}(k) \} $, $ i \in V  $ converge to the same random point in $ \mathcal{X}^{*} $ with probability 1.
\end{corollary}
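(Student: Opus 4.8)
The plan is to reduce this corollary directly to Proposition~\ref{Convergence_type1a}, which already establishes almost-sure convergence for every scenario in Division~1 under the single additional hypothesis $ f^{(\iota)}({\bf x}^{*}) = f^{(\iota)}({\bf x}^{(\iota)}) $ for all $ (\iota) $. All I need to do is check that the structural condition $ {\bf x}^{\lambda,r_{(\iota)}} = {\bf x}^{*} $ for every replica $ r_{(\iota)} $ both (i) places the present Scenario~2 inside Division~1, and (ii) supplies exactly that function-value equality; the convergence then follows word for word from the proposition.

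First I would verify membership in Division~1. By definition Division~1 contains Scenario~2 precisely when Condition~\ref{Condition-1} holds, i.e. when $ \max_{(\iota),\lambda}\|{\bf x}^{*}-{\bf x}^{\lambda,r_{(\iota)}}\| \leq \max_{k-H \leq \hat{k} \leq k}\|{\bf v}_{q}(\hat{k})-{\bf x}^{*}\| $. Under $ {\bf x}^{\lambda,r_{(\iota)}} = {\bf x}^{*} $ the left-hand side is identically zero while the right-hand side, being a norm, is nonnegative; hence Condition~\ref{Condition-1} holds at every iteration $ k $, so this run of Scenario~2 lies in Division~1 throughout the process and the Division~1 bounds on $ \|{\bf R}_{j,r_{(\iota)}}(k)\| $ and $ \|{\bf \epsilon}_{j,r_{(\iota)}}(k)\| $ from Section~\ref{app:evaluateR} are available.

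Next I would verify the function-value hypothesis. By Assumption~\ref{A3.1}(b) we have $ f^{(\iota)} = \sum_{\lambda=1}^{n_{r_{(\iota)}}} f^{r_{(\iota)}}_{\lambda} $ with each summand convex and minimized at $ {\bf x}^{\lambda,r_{(\iota)}} $, so $ \nabla f^{r_{(\iota)}}_{\lambda}({\bf x}^{\lambda,r_{(\iota)}}) = 0 $. Substituting $ {\bf x}^{\lambda,r_{(\iota)}} = {\bf x}^{*} $ gives $ \nabla f^{r_{(\iota)}}_{\lambda}({\bf x}^{*}) = 0 $ for every $ \lambda $, and summing over $ \lambda $ yields $ \nabla f^{(\iota)}({\bf x}^{*}) = 0 $. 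Convexity of $ f^{(\iota)} $ then makes $ {\bf x}^{*} $ a global minimizer of $ f^{(\iota)} $, so $ f^{(\iota)}({\bf x}^{*}) = f^{(\iota)}({\bf x}^{(\iota)}) $ for all $ (\iota) $, which is precisely the standing hypothesis needed to invoke Proposition~\ref{Convergence_type1a}.

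With both facts in hand the conclusion is immediate: Lemma~\ref{type1_error_bound} applies, and running the martingale argument of Lemma~\ref{Lemma6a} exactly as in the proof of Proposition~\ref{Convergence_type1a} gives $ \sum_{i=1}^{n}\|{\bf v}_{i}(k)-{\bf x}^{*}\|^{2} \to 0 $ with probability 1, whence $ {\bf x}_{i}(k+1) = {\bf v}_{i}(k) \to {\bf x}^{*} $ through (SRDO-2). I do not expect a genuine obstacle here, since the corollary is only a specialization; the one point deserving care is that $ {\bf x}^{\lambda,r_{(\iota)}} = {\bf x}^{*} $ forces Condition~\ref{Condition-1} for all $ k $ (not merely at the start of the run), and this is exactly why the strongly convex and convex statements collapse into a direct appeal to Propositions~\ref{Convergence_type1} and \ref{Convergence_type1a} respectively.
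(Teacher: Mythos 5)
Your proposal is correct and follows the same route as the paper, which simply cites Proposition~\ref{Convergence_type1a} together with the condition $ {\bf x}^{\lambda,r_{(\iota)}} = {\bf x}^{*} $; your two verification steps (that the condition forces Condition~\ref{Condition-1} to hold at every $k$, placing Scenario~2 in Division~1, and that it yields $ f^{(\iota)}({\bf x}^{*}) = f^{(\iota)}({\bf x}^{(\iota)}) $ via $ \nabla f^{(\iota)}({\bf x}^{*}) = 0 $) are exactly the details the paper leaves implicit.
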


\begin{proof}
Follows from Proposition~\ref{Convergence_type1a} and condition $ {\bf x}^{\lambda,r_{(\iota)}} = {\bf x}^{*} $. 
\end{proof}

\subsection{Convergence Results for SRDO in any scenario}

\begin{lemma}\label{type2_error_bound}
%Let Assumptions 1, 2, 4 and 5 hold. Let the functions in Assumption 1 also satisfy  $ f^{(\iota)}({\bf x}^{*}) > f^{(\iota)}(x^{(\iota)}) $ for at least one $ (\iota) $. And let the functions that satisfy  $ f^{(\iota)}( {\bf x}^{*}) = f^{(\iota)}( {\bf x}^{(\iota)}) $ for any $ (\iota) $ be strongly convex. Let the sequences $ \{{\bf x}_{i}(k)\} $ and $ \{ {\bf v}_{i}(k) \} $, $ i \in V $ be generated by Algorithm~\ref{alg:algorithm-srdo}.
Let Assumptions 1, 2, 4 and 5 hold. And let the functions that satisfy  $ f^{(\iota)}( {\bf x}^{*}) = f^{(\iota)}( {\bf x}^{(\iota)}) $ for any $ (\iota) $ be strongly convex. Let the sequences $ \{{\bf x}_{i}(k)\} $ and $ \{ {\bf v}_{i}(k) \} $, $ i \in V $ be generated by Algorithm~\ref{alg:algorithm-srdo}.
Then we have
{ \begin{equation}\label{ineq_type2_error_bound}
\begin{split}
&  \sum_{l=1}^{n}\|  {\bf v}_{l}(k+1) - {\bf x}^{*}\|^{2}  \leq (1-\mu) \sum_{j=1}^{n}\|{\bf v}_{j}(k)- {\bf x}^{*}\|^{2}  \\
&  + 2 \alpha_{k}\sum_{j=1}^{n}\sum_{(\iota)=1}^{p}\gamma_{(\iota)} \langle {\bf \epsilon}_{j,r_{(\iota)}}(k), {\bf v}_{j} - {\bf x}^{*} \rangle 
 + 2 (1 - \gamma_{(0)} )\alpha_{k}^{2} \sum_{j=1}^{n}\sum_{(\iota)=1}^{p}\gamma_{(\iota)} \|{\bf \epsilon}_{j,r_{(\iota)}}(k)\|^{2} \\
& + 2 \alpha_{k} \sum_{j=1}^{n}|I|\gamma_{max} L \| {\bf x}^{*} - \bar{\bf x}^{(\iota)} \| ^{2} + 2 \alpha_{k} \sum_{j=1}^{n}|I| \gamma_{max} L \| {\bf v}_{j}(k) - \bar{\bar{\bf x}}^{(\iota)} \| ^{2} \\
& - 2 \alpha_{k}  \sum_{j=1}^{n} \sum_{(\iota)=1}^{p} \gamma_{(\iota)} a_{v_{j}(k),(\iota)}(b_{v_{j}(k),(\iota)} - (1 -\gamma_{(0)}) \alpha_{k} a_{v_{j}(k),(\iota)} ) \| {\bf v}_{j}(k) - {\bf x}^{(\iota)} \| ^{2}    
\end{split}
\end{equation}}
\end{lemma}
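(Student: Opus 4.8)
The plan is to take the general one-step recursion of Lemma~\ref{general_error_bound}, whose hypotheses (Assumptions~\ref{A3.1}, \ref{A3.3}, \ref{A3.5}, \ref{A3.6}) are exactly the ones assumed here, as the starting point, and then reshape only its two gradient-dependent terms. Indeed, inequality~\eqref{ineq_general_error_bound} already produces the consensus contraction $(1-\mu)\sum_{j}\|{\bf v}_j(k)-{\bf x}^*\|^2$ and the two error terms in ${\bf \epsilon}_{j,r_{(\iota)}}(k)$, which are carried over verbatim into the claimed bound~\eqref{ineq_type2_error_bound}. Everything to be done concerns the pair
\[
2\alpha_k\sum_{j=1}^{n}\sum_{(\iota)=1}^{p}\gamma_{(\iota)}\langle\nabla f^{(\iota)}({\bf v}_j(k)),{\bf x}^*-{\bf v}_j(k)\rangle + 2(1-\gamma_{(0)})\alpha_k^2\sum_{j=1}^{n}\sum_{(\iota)=1}^{p}\gamma_{(\iota)}\|\nabla f^{(\iota)}({\bf v}_j(k))\|^2 .
\]

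First I would insert the partition minimizer ${\bf x}^{(\iota)}$ (at which $\nabla f^{(\iota)}({\bf x}^{(\iota)})=0$) into the inner product and split
\[
\langle\nabla f^{(\iota)}({\bf v}_j(k)),{\bf x}^*-{\bf v}_j(k)\rangle = \langle\nabla f^{(\iota)}({\bf v}_j(k))-\nabla f^{(\iota)}({\bf x}^{(\iota)}),{\bf x}^*-{\bf x}^{(\iota)}\rangle + \langle\nabla f^{(\iota)}({\bf v}_j(k))-\nabla f^{(\iota)}({\bf x}^{(\iota)}),{\bf x}^{(\iota)}-{\bf v}_j(k)\rangle .
\]
The second bracket is nonpositive by monotonicity of $\nabla f^{(\iota)}$; writing $\nabla f^{(\iota)}({\bf v}_j(k))-\nabla f^{(\iota)}({\bf x}^{(\iota)})=a_{v_j(k),(\iota)}\|{\bf v}_j(k)-{\bf x}^{(\iota)}\|\,\overrightarrow{u}$ and $b_{v_j(k),(\iota)}=\langle\overrightarrow{u},\overrightarrow{v}\rangle$ along ${\bf v}_j(k)-{\bf x}^{(\iota)}$ (the representation already used in the strongly convex analysis), it equals $-a_{v_j(k),(\iota)}b_{v_j(k),(\iota)}\|{\bf v}_j(k)-{\bf x}^{(\iota)}\|^2$. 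The first bracket vanishes on every \emph{consistent} partition (${\bf x}^{(\iota)}={\bf x}^*$, i.e.\ $f^{(\iota)}({\bf x}^*)=f^{(\iota)}({\bf x}^{(\iota)})$); on the remaining partitions, indexed by the set $I$, I would bound it by Cauchy--Schwarz and the Lipschitz estimate~\eqref{Lipshconstpart} as $L\|{\bf v}_j(k)-{\bf x}^{(\iota)}\|\,\|{\bf x}^*-{\bf x}^{(\iota)}\|$ and then separate the two factors via $2ab\le a^2+b^2$. Replacing $\gamma_{(\iota)}$ by $\gamma_{max}$ on these finitely many partitions and counting them with $|I|$ yields exactly the two bias terms $2\alpha_k\sum_j|I|\gamma_{max}L\|{\bf x}^*-\bar{\bf x}^{(\iota)}\|^2$ and $2\alpha_k\sum_j|I|\gamma_{max}L\|{\bf v}_j(k)-\bar{\bar{\bf x}}^{(\iota)}\|^2$, with $\bar{\bf x}^{(\iota)}=\bar{\bar{\bf x}}^{(\iota)}={\bf x}^{(\iota)}$.

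The squared-norm term is then folded in exactly as in the proof of Lemma~\ref{type1_error_bound}: since $\|\nabla f^{(\iota)}({\bf v}_j(k))\|^2=a_{v_j(k),(\iota)}^2\|{\bf v}_j(k)-{\bf x}^{(\iota)}\|^2$, adding it to $-2\alpha_k a_{v_j(k),(\iota)}b_{v_j(k),(\iota)}\|{\bf v}_j(k)-{\bf x}^{(\iota)}\|^2$ gives the single negative definite term
\[
-2\alpha_k\sum_{j=1}^{n}\sum_{(\iota)=1}^{p}\gamma_{(\iota)}\,a_{v_j(k),(\iota)}\bigl(b_{v_j(k),(\iota)}-(1-\gamma_{(0)})\alpha_k a_{v_j(k),(\iota)}\bigr)\|{\bf v}_j(k)-{\bf x}^{(\iota)}\|^2 .
\]
Assembling the contraction term, the two untouched ${\bf \epsilon}$ terms inherited from Lemma~\ref{general_error_bound}, the two bias terms, and this negative definite term reproduces~\eqref{ineq_type2_error_bound}.

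The main obstacle I anticipate is the clean separation of the inner product into the monotone piece centered at ${\bf x}^{(\iota)}$ and the residual bias piece $\langle\nabla f^{(\iota)}({\bf v}_j(k))-\nabla f^{(\iota)}({\bf x}^{(\iota)}),{\bf x}^*-{\bf x}^{(\iota)}\rangle$: expanding about ${\bf x}^{(\iota)}$ rather than about ${\bf x}^*$ is precisely what keeps the monotonicity step valid without requiring that every $f^{(\iota)}$ be minimized at ${\bf x}^*$, and it is also why the negative definite term must carry $\|{\bf v}_j(k)-{\bf x}^{(\iota)}\|^2$ instead of $\|{\bf v}_j(k)-{\bf x}^*\|^2$. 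Only convexity/monotonicity is needed for the inequality itself; the strong convexity hypothesis on the consistent partitions is retained because the strict positivity $b_{v_j(k),(\iota)}>0$ it guarantees is what the subsequent supermartingale argument (Lemma~\ref{Lemma6a}) will exploit once this bound is used downstream.
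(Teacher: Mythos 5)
Your proof is correct and follows essentially the same route as the paper's: start from Lemma~\ref{general_error_bound}, split $\langle\nabla f^{(\iota)}({\bf v}_j(k)),{\bf x}^*-{\bf v}_j(k)\rangle$ about the partition minimizer ${\bf x}^{(\iota)}$, invoke strong convexity on the consistent partitions to force ${\bf x}^{(\iota)}={\bf x}^*$ there so that only the set $I$ contributes bias, and fold the squared-gradient term into the monotone piece via the $a_{v_{j}(k),(\iota)}$, $b_{v_{j}(k),(\iota)}$ representation. The only cosmetic difference is how the residual $\langle\nabla f^{(\iota)}({\bf v}_j(k))-\nabla f^{(\iota)}({\bf x}^{(\iota)}),{\bf x}^*-{\bf x}^{(\iota)}\rangle$ is bounded: you use Cauchy--Schwarz, the Lipschitz estimate and $2ab\le a^2+b^2$, whereas the paper passes through the convexity gaps $f^{(\iota)}({\bf x}^*)-f^{(\iota)}({\bf x}^{(\iota)})$ and $\langle\nabla f^{(\iota)}({\bf v}_j(k)),{\bf v}_j(k)-{\bf x}^{(\iota)}\rangle$; both routes land on the same two $|I|\gamma_{max}L$ bias terms.
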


\begin{proof}
See Appendix~D for proof. 
\end{proof}

\begin{proposition}\label{sum_v_i-x^*_type2}
%Let Assumptions 1-5 hold. Let the functions in Assumption 1 be strongly convex and satisfy  $ f^{(\iota)}({\bf x}^{*}) > f^{(\iota)}( {\bf x}^{(\iota)}) $ for at least one $ (\iota) $, or let at least the functions that satisfy  $ f^{(\iota)}( {\bf x}^{*}) = f^{(\iota)}( {\bf x}^{(\iota)}) $ for any $ (\iota) $ be strongly convex.  Let the sequences $ \{ {\bf x}_{i}(k) \} $ and $ \{ {\bf v}_{i}(k) \} $, $ i \in V  $ be generated by Algorithm~\ref{alg:algorithm-srdo} with stepsizes and errors as given in Assumptions 3 and 5. Assume that problem \eqref{eq1} has a non-empty optimal solution set $ \mathcal{X}^{*} $ as given in Assumption 1. Then, the sequence $ \{ \sum_{i=1}^{n} \| {\bf v}_{i}(k) - {\bf x}^{*} \|^{2} \} $ converge to a nonnegative value $ D_{1} $.
Let Assumptions \ref{A3.1}-\ref{A3.6} hold. Let the functions that satisfy $ f^{(\iota)}({\bf x}^{*}) = f^{(\iota)}({\bf x}^{(\iota)}) $ be strongly convex. Let the sequences $ \{ {\bf x}_{i}(k) \} $ and $ \{ {\bf v}_{i}(k) \} $, $ i \in V  $ be generated by Algorithm~\ref{alg:algorithm-srdo} with stepsizes and errors as given in Assumptions \ref{A3.4} and \ref{A3.6}. Assume that problem \eqref{eq1} has a non-empty optimal solution set $ \mathcal{X}^{*} $ as given in Assumption~\ref{A3.1}. Then, the sequences $ \{ {\bf x}_{i}(k) \} $ and $ \{ {\bf v}_{i}(k) \} $, $ i \in V  $ converge to the same random point in $ \mathcal{X}^{*} $ with probability 1.
\end{proposition}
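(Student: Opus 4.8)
The plan is to mirror the proof of Proposition~\ref{Convergence_type1a}, now starting from Lemma~\ref{type2_error_bound} instead of Lemma~\ref{type1_error_bound}, and to reduce the resulting recursion to the supermartingale-with-delay inequality \eqref{eqw} so that Lemma~\ref{Lemma6a} applies. First I would invoke Lemma~\ref{type2_error_bound}, which holds in every scenario since it does not presuppose Condition~\ref{Condition-1}, and substitute into \eqref{ineq_type2_error_bound} the ``all scenarios'' bounds on $\|{\bf \epsilon}_{j,r_{(\iota)}}(k)\|$ and $\|{\bf \epsilon}_{j,r_{(\iota)}}(k)\|^{2}$ derived at the end of Section~\ref{app:evaluateR}. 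This turns the first two error lines into terms of the form $C\alpha_{k}\max_{k-H\le\hat k\le k}\sum_{j}\|{\bf v}_{q}(\hat k)-{\bf x}^{*}\|^{2}$ and $C'\alpha_{k}^{2}\max_{k-H\le\hat k\le k}\sum_{j}\|{\bf v}_{q}(\hat k)-{\bf x}^{*}\|^{2}$, exactly the delayed-maximum shape that \eqref{eqw} tolerates.

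Next I would dispose of the final (negative) line of \eqref{ineq_type2_error_bound}. As in Proposition~\ref{Convergence_type1a}, write $\nabla f^{(\iota)}({\bf v}_{j}(k))-\nabla f^{(\iota)}({\bf x}^{(\iota)})=a_{v_{j}(k),(\iota)}\|{\bf v}_{j}(k)-{\bf x}^{(\iota)}\|\,\overrightarrow{u}$ with $a_{v_{j}(k),(\iota)}\le L$, and use the strong convexity of exactly those $f^{(\iota)}$ for which ${\bf x}^{(\iota)}={\bf x}^{*}$ to get $a_{v_{j}(k),(\iota)}\,b_{v_{j}(k),(\iota)}\ge\sigma_{(\iota)}^{2}>0$. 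Since $\alpha_{k}\to 0$, for $k$ past some threshold $k^{c}$ the bracket $b_{v_{j}(k),(\iota)}-(1-\gamma_{(0)})\alpha_{k}a_{v_{j}(k),(\iota)}$ is nonnegative, so that entire line is $\le 0$ and may be dropped; for the strongly convex partitions ${\bf x}^{(\iota)}={\bf x}^{*}$, so dropping it only discards a genuine negative multiple of $\|{\bf v}_{j}(k)-{\bf x}^{*}\|^{2}$.

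The step I expect to be the crux is the two middle bias terms $2\alpha_{k}\sum_{j}|I|\gamma_{max}L\|{\bf x}^{*}-\bar{\bf x}^{(\iota)}\|^{2}$ and $2\alpha_{k}\sum_{j}|I|\gamma_{max}L\|{\bf v}_{j}(k)-\bar{\bar{\bf x}}^{(\iota)}\|^{2}$, which are absent from the Division~1 analysis and encode Scenario~2's failure of Condition~\ref{Condition-1}. For the second I would split $\|{\bf v}_{j}(k)-\bar{\bar{\bf x}}^{(\iota)}\|^{2}\le 2\|{\bf v}_{j}(k)-{\bf x}^{*}\|^{2}+2\|{\bf x}^{*}-\bar{\bar{\bf x}}^{(\iota)}\|^{2}$, folding the first piece into the $\alpha_{k}$-weighted coefficient of the delayed maximum and leaving a constant piece alongside the first bias term. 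The difficulty is that a term $C\alpha_{k}\cdot\text{const}$ is \emph{not} square-summable when $\sum_{k}\alpha_{k}=\infty$, so it cannot simply be pushed into a $\rho^{k}$ remainder; the resolution must be that the strongly convex hypothesis forces the relevant $\bar{\bf x}^{(\iota)},\bar{\bar{\bf x}}^{(\iota)}$ to coincide with ${\bf x}^{*}$ (so $\|{\bf x}^{*}-\bar{\bf x}^{(\iota)}\|$ vanishes on those partitions), or else these contributions are absorbed into the strong-convexity negative term retained from the step above rather than discarded. Once these terms are shown to add at most a coefficient strictly below one times the delayed maximum plus a square-summable remainder, the recursion matches \eqref{eqw}; applying Lemma~\ref{Lemma6a} with $v_{k}=\sum_{i}\|{\bf v}_{i}(k)-{\bf x}^{*}\|^{2}$ gives geometric decay $v_{k}\le\rho^{k}V_{0}$, hence $\|{\bf v}_{i}(k)-{\bf x}^{*}\|\to 0$, and feeding this into (SRDO-2) exactly as in the earlier propositions (${\bf R}_{i,r_{(\iota)}}(k)\to 0$ and $\alpha_{k}\nabla f^{(\iota)}({\bf v}_{i}(k))\to 0$) yields ${\bf x}_{i}(k)\to{\bf x}^{*}$ with probability~1.
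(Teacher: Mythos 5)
You have correctly identified the crux — the two bias terms proportional to $\alpha_{k}$ times a constant — but your proposed resolutions do not work, and as a result the reduction to \eqref{eqw} and Lemma~\ref{Lemma6a} fails. The hypothesis of the proposition only imposes strong convexity on those $f^{(\iota)}$ with $f^{(\iota)}({\bf x}^{*})=f^{(\iota)}({\bf x}^{(\iota)})$; for the partitions $(\iota)\in I$ with $f^{(\iota)}({\bf x}^{*})>f^{(\iota)}({\bf x}^{(\iota)})$ nothing forces ${\bf x}^{(\iota)}$ (hence $\bar{\bf x}^{(\iota)}$ or $\bar{\bar{\bf x}}^{(\iota)}$) to coincide with ${\bf x}^{*}$, so $\max_{(\iota)}\|{\bf x}^{*}-{\bf x}^{(\iota)}\|^{2}$ and $\max_{(\iota)}\|{\bf x}^{*}-{\bf x}^{\lambda,r_{(\iota)}}\|^{2}$ are genuinely nonzero constants. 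Nor can these $O(\alpha_{k})$ constants be absorbed into the retained negative strong-convexity term $-2\alpha_{k}\sum_{j}\sum_{(\iota)}\gamma_{(\iota)}a(b-(1-\gamma_{(0)})\alpha_{k}a)\|{\bf v}_{j}(k)-{\bf x}^{(\iota)}\|^{2}$: that would require a uniform positive lower bound on $\|{\bf v}_{j}(k)-{\bf x}^{(\iota)}\|^{2}$, which is unavailable (and vanishes for $(\iota)\in I^{\complement}$ as ${\bf v}_{j}(k)\to{\bf x}^{*}$). The paper's proof uses that negative term only as a sign check before deleting it, exactly as in Division~1.

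The missing ingredient is Lemma~\ref{Lemma6a1} (Martingale~2) rather than Lemma~\ref{Lemma6a}. The paper keeps the $O(\alpha_{k})$ bias as an explicit additive term $a_{3,k}\le\frac{b_{k+1}}{l}\eta$ with $b_{k}=\alpha_{k}=\frac{1}{(k+a)^{\theta}}$, and Lemma~\ref{Lemma6a1} then yields $\sum_{i}\|{\bf v}_{i}(k)-{\bf x}^{*}\|^{2}\le\rho_{3}^{k}V_{0}'''+b_{k}\eta_{3}$; convergence follows because $b_{k}\to 0$, not because the iteration contracts geometrically. Consequently your asserted conclusion $v_{k}\le\rho^{k}V_{0}$ is also stronger than what is actually provable here — the decay is governed by $\alpha_{k}$, which is the reason the paper states a different (slower) convergence rate for this general case than for the strongly convex Division~1 case of Proposition~\ref{Convergence_type1}. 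The remainder of your argument (invoking Lemma~\ref{type2_error_bound}, substituting the all-scenario error bounds, splitting $\|{\bf v}_{j}(k)-\bar{\bar{\bf x}}^{(\iota)}\|^{2}$, dropping the negative term via the sufficient condition $(1-\gamma_{(0)})\alpha_{k}L\le\sigma_{(\iota)}/a_{v_{j}(k),(\iota)}$ for large $k$, and the final passage from ${\bf v}_{i}(k)\to{\bf x}^{*}$ to ${\bf x}_{i}(k)\to{\bf x}^{*}$ via (SRDO-2)) matches the paper.
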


\begin{proof}
With errors as in Assumption~\ref{A3.4} we have $ \|{\bf R}_{i,(\iota)}(k)\|$ and $\|{\bf \epsilon}_{j,r_{(\iota)}}(k)\|$ as given in Appendix or Section~\ref{app:evaluateR} for any algorithm scenario.
Then having $ f^{(\iota)}({\bf x}^{*}) > f^{(\iota)}({\bf x}^{(\iota)}) $ for at least one $ (\iota) $ and the functions that satisfy  $ f^{(\iota)}( {\bf x}^{*}) = f^{(\iota)}( {\bf x}^{(\iota)}) $ for any $ (\iota) $ strongly convex then we have Lemma~\ref{type2_error_bound} satisfied. Then we can use  the resulting inequality \eqref{ineq_type2_error_bound} with the substitution of $ \| {\bf \epsilon}_{j,r_{(\iota)}}(k) \| $ and $ \| {\bf v}_{j}(k) -\bar{\bar{\bf x}}^{(\iota)} \|^{2}  \leq 2 \| {\bf v}_{j}(k) -{\bf x}^{*} \|^{2} + 2 \| {\bf x}^{*} -\bar{\bar{\bf x}}^{(\iota)} \|^{2} $, $ \| {\bf v}_{j}(k) -{\bf x}^{*} \|^{2} \leq max_{k-H \leq \hat{k} \leq k ; q \in V} \| {\bf v}_{q}(\hat{k}) -  {\bf x}^{*}\|^{2} $ and $ \| {\bf x}^{*} - \bar{\bar{\bf x}}^{(\iota)} \| ^{2}  \leq  \| {\bf x}^{*} - \bar{\bf x}^{(\iota)} \| = \max_{(\iota)} \| {\bf x}^{*}- {\bf x}^{(\iota)} \| ^{2} $ to get % which is valid for $ k \geq k_{0} $ where $ k_{0} $ is dependent on $ f^{(\iota)}({\bf x}^{(\iota)}) $ and $ f^{(\iota)}({\bf x}^{*}) $ for all $ (\iota) \in I ^ {\complement} $ and is as in lemma, that is
{ \begin{equation}\label{ineq_type2_error_bound_subst}
\begin{split}
& \sum_{l=1}^{n}\|  {\bf v}_{l}(k+1)-  {\bf x}^{*}\|^{2}  \leq (1-\mu)\sum_{j=1}^{n}\| {\bf v}_{j}(k)- {\bf x}^{*}\|^{2}  \\
& + 6 \alpha_{k} n |I| L  \max_{(\iota)} \| {\bf x}^{*}- {\bf x}^{(\iota)} \| ^{2} + 4 \alpha_{k} |I|  L \max_{k-H \leq \hat{k} \leq k ; q \in V} \sum_{j=1}^{n}\| {\bf v}_{q}(\hat{k}) -  {\bf x}^{*}\|^{2} \\
& + 4 p L \alpha_{k} \max\limits_{(\iota)} \| {\bf A}^{r_{(\iota)}} \| _{\infty} \|{\bf B}^{r_{(\iota)}}\| _{2,\infty}  \max_{k-H \leq \hat{k} \leq k ; q \in V} \sum_{j=1}^{n} \| {\bf v}_{q}(\hat{k}) -  {\bf x}^{*}\|^{2} \\ & + 8 p ( 1 - \gamma_{(0)} ) L^{2} \alpha_{k}^{2} \max_{(\iota)} \| {\bf A}^{r_{(\iota)}} \| ^{2} _{\infty} \|{\bf B}^{r_{(\iota)}}\| ^{2} _{2,\infty} \max_{k-H \leq \hat{k} \leq k ; q \in V} \sum_{j=1}^{n} \| {\bf v}_{q}(\hat{k}) -  {\bf x}^{*}\|^{2} \\
& + \alpha_{k}n (1 + 4 (1-\gamma_{(0)}) \alpha_{k} L \max\limits_{(\iota)} \| {\bf A}^{r_{(\iota)}} \| _{\infty} \|{\bf B}^{r_{(\iota)}}\| _{2,\infty}  ) \\
& \hspace{2cm} \times p L \max\limits_{(\iota)} \| {\bf A}^{r_{(\iota)}} \| _{\infty} \|{\bf B}^{r_{(\iota)}}\| _{2,\infty} \max_{(\iota)} \| {\bf x}^{*}- {\bf x}^{\lambda,r_{(\iota)}} \| ^{2} \\
& - 2 \alpha_{k} \sum_{j=1}^{n}\sum_{(\iota)=1}^{p}\gamma_{(\iota)} a_{v_{j}(k),(\iota)}(b_{v_{j}(k),(\iota)} - (1 -\gamma_{(0)}) \alpha_{k} a_{v_{j}(k),(\iota)} ) \| {\bf v}_{j}(k) - {\bf x}^{(\iota)} \| ^{2}  
\end{split}
\end{equation}}
But in order to be able to use Lemma~\ref{Lemma6a1} the last term in \eqref{ineq_type2_error_bound_subst} should be negative  so that it can be deleted from the inequality. Which means $ b_{v_{j}(k),(\iota)} - 2 (1-\gamma_{(0)}) \alpha_{k} a_{v_{j}(k),(\iota)}  \geq 0 $ where $ b_{v_{j}(k),(\iota)} = \langle \overrightarrow{u},\overrightarrow{v} \rangle $. And $ \nabla{f}^{(\iota)}({\bf v}_{j}(k)) - \nabla{f}^{(\iota)}({\bf x}^{(\iota)}) = a_{v_{j}(k),(\iota)} \| {\bf v}_{j}(k) - {\bf x}^{(\iota)} \| \overrightarrow{u} $ where $ a_{v_{j}(k),(\iota)} \leq L $.
However, $ f^{(\iota)} $ is strongly convex for every $ (\iota) $, then $ \langle \nabla{f}^{(\iota)}({\bf v}_{j}(k)) - \nabla{f}^{(\iota)}({\bf x}^{(\iota)}), {\bf v}_{j}(k) - {\bf x}^{(\iota)} \rangle  = a_{v_{j}(k),(\iota)} \| {\bf v}_{j}(k) - {\bf x}^{(\iota)} \| ^{2} \langle \overrightarrow{u},\overrightarrow{v} \rangle  \geq \sigma_{(\iota)} ^{2} \| {\bf v}_{j}(k) - {\bf x}^{(\iota)} \| ^{2} $, that is $ \langle \overrightarrow{u},\overrightarrow{v} \rangle \geq \frac{\sigma_{(\iota)}}{a_{v_{j}(k),(\iota)}} $. 
Therefore, a sufficient condition is 
\begin{equation}\label{suffcond2}
\begin{split}
 (1-\gamma_{(0)}) \alpha_{k} a_{v_{j}(k),(\iota)} \leq   (1-\gamma_{(0)}) \alpha_{k} L \leq \frac{\sigma_{(\iota)}}{a_{v_{j}(k),(\iota)}} = \langle \overrightarrow{u},\overrightarrow{v} \rangle 
\end{split}
\end{equation}
The sufficient condition in \eqref{suffcond2} is satisfied for $ k \geq k^{c}_{3} $ since $ \alpha_{k} \rightarrow 0 $. \\

Let us choose $ \alpha_{k} = \frac{1}{{k+a}^{\theta}} $ where $ a \geq 0 $ and $ \theta \in (0,1] $.
Let $ k_{3,1} $ be the iteration at which $ 0 < (1-\mu) + 4 \alpha_{k} |I|  L + 4 p L \alpha_{k} \max\limits_{(\iota)} \| {\bf A}^{r_{(\iota)}} \| _{\infty} \|{\bf B}^{r_{(\iota)}}\| _{2,\infty} $ \\ $ + 8 p (1-\gamma_{(0)}) L^{2} \alpha_{k}^{2} \max_{(\iota)} \| {\bf A}^{r_{(\iota)}} \| ^{2} _{\infty} \|{\bf B}^{r_{(\iota)}}\| ^{2} _{2,\infty} < 1 $ for the first time. And let $ k_{3,2} $ be the first iteration at which $ 4 \alpha_{k} |I|  L + 4 p L \alpha_{k} \max\limits_{(\iota)} \| {\bf A}^{r_{(\iota)}} \| _{\infty} \|{\bf B}^{r_{(\iota)}}\| _{2,\infty} + 8 p (1-\gamma_{(0)}) L^{2} \alpha_{k}^{2} \max_{(\iota)} \| {\bf A}^{r_{(\iota)}} \| ^{2} _{\infty} \|{\bf B}^{r_{(\iota)}}\| ^{2} _{2,\infty} < \frac{1-\frac{1}{l}}{(B+2)^{\theta}} + \mu - 1 $ for the first time. Then one choice is $ b_{k} = \alpha_{k} $ and an $ l \geq 1 $ such that we can find a feasible $ k_{3,2} $ depending on the value of $ \mu $.

Since $ \max_{(\iota)} \| {\bf x}^{*}- {\bf x}^{(\iota)} \| ^{2} $ and $ \max_{(\iota)} \| {\bf x}^{*}- {\bf x}^{\lambda,r_{(\iota)}} \| ^{2} $ are fixed independent of $ k $ then by deleting the last term of \eqref{ineq_type2_error_bound_subst} it will be similar to the martingale inequality \eqref{eqw1} of Lemma~\ref{Lemma6a1} for $ k \geq k^{*}_{3} $ where $ k^{+}_{3} , k_{3,1},k_{3,2}) $.

By the result of Lemma~\ref{Lemma6a1} we have for $ v_{k}= \sum_{i=1}^{n} \| {\bf v}_{i}(k) - {\bf x}^{*} \|^{2} $ that
{ \begin{equation}
\begin{split}
    \sum_{i=1}^{n} \| {\bf v}_{i}(k) - {\bf x}^{*} \|^{2} \leq \rho_{3} ^{k}  V_{0}^{'''} + b_{k} \eta_{3}
\end{split}    
\end{equation}}
for $ k \geq \bar{k}_{3} + H + 1 $ where $ \rho_{3} = \rho $, $ V_{0}^{'''} = V_{0} $, $ H = B $ and $ \bar{k}_{3}=\bar{k} $ and $ k^{*}_{3} $, i.e., take $ \bar{k}_{3}=k^{*}_{3} - 1 $ are as in the lemma and where $ \eta_{3}=\eta > 0 $ as substituted from the inequality \eqref{ineq_type2_error_bound_subst} by using the lemma. Therefore, as $ k \rightarrow \infty $ we have $ \sum_{i=1}^{n} \| {\bf v}_{i}(k) - {\bf x}^{*} \|^{2} \rightarrow 0 $ since $ b_{k} \rightarrow 0 $. That is, $  \| {\bf v}_{i}(k) - {\bf x}^{*} \| \rightarrow 0 $ for all $ i \in V $. 
Then in view of (SRDO-2) where $ {\bf x}_{i}(k+1) = {\bf v}_{i}(k) -\alpha_{k}\nabla{f}^{(\iota)}({\bf v}_{i}(k)) + {\bf R}_{i,r_{(\iota)}}(k) $ and since ${\bf R}_{i,r_{(\iota)}}(k) \rightarrow 0 $ because  $  \| {\bf v}_{i}(k) - {\bf x}^{*} \| \rightarrow 0 $, $ \alpha_{k} \rightarrow 0 $ and $ \alpha_{k} \nabla{f}^{(\iota)}({\bf v}_{i}(k))\rightarrow 0 $ since $ \alpha_{k} \rightarrow 0 $   where $ \nabla{f}^{(\iota)}({\bf v}_{i}(k)) \leq G_{f} $ since $ \nabla{f}^{(\iota)}({\bf v}_{i}(k))=  \nabla{f}^{(\iota)}({\bf v}_{i}(k))-  \nabla{f}^{(\iota)}({\bf x}^{(\iota)}) \leq L \| {\bf v}_{i}(k)- {\bf x}^{*} \| + L \| {\bf x}^{*} - {\bf x}^{(\iota)} \| $, $\| {\bf x}^{*} - {\bf x}^{(\iota)} \| =const. $ and ${\bf v}_{i}(k) \rightarrow {\bf x}^{*} $. Therefore, $ {\bf x}_{i}(k+1) = {\bf v}_{i}(k) = {\bf x}^{*} $ as $ k \rightarrow \infty $.

\end{proof}

\begin{theorem}\label{sum_v_i-x^*_type2a}
%Let Assumptions 1-5 hold. Let the functions in Assumption 1  satisfy  $ f^{(\iota)}({\bf x}^{*}) > f^{(\iota)}({\bf x}^{(\iota)}) $ for at least one $ (\iota) $. Let the sequences $ \{ {\bf x}_{i}(k) \} $ and $ \{ {\bf v}_{i}(k) \} $, $ i \in V  $ be generated by Algorithm~\ref{alg:algorithm-srdo} with stepsizes and errors as given in Assumptions 3 and 5. Assume that problem \eqref{eq1} has a non-empty optimal solution set $ \mathcal{X}^{*} $ as given in Assumption 1. Then, the sequence $ \{ \sum_{i=1}^{n} \| {\bf v}_{i}(k) - {\bf x}^{*} \|^{2} \} $ converge to a nonnegative value $ D_{2} $.
Let Assumptions \ref{A3.1}-\ref{A3.6} hold. Let the sequences $ \{ {\bf x}_{i}(k) \} $ and $ \{ {\bf v}_{i}(k) \} $, $ i \in V  $ be generated by Algorithm~\ref{alg:algorithm-srdo} with stepsizes and errors as given in Assumptions \ref{A3.4} and \ref{A3.6}. Assume that problem \eqref{eq1} has a non-empty optimal solution set $ \mathcal{X}^{*} $ as given in Assumption~\ref{A3.1}. Then, the sequences $ \{ {\bf x}_{i}(k) \} $ and $ \{ {\bf v}_{i}(k) \} $, $ i \in V  $ converge to the same random point in $ \mathcal{X}^{*} $ with probability 1.
\end{theorem}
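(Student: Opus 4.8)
The plan is to treat Theorem~\ref{sum_v_i-x^*_type2a} as the fully general, merely-convex counterpart of Proposition~\ref{sum_v_i-x^*_type2}, proving it along the same supermartingale route but replacing the strong-convexity step by the ``negative-part'' device already used for the merely-convex Division~1 result (Proposition~\ref{Convergence_type1a}). Concretely, I would start from the per-iteration estimate of Lemma~\ref{type2_error_bound} and substitute the bounds on $\|{\bf \epsilon}_{j,r_{(\iota)}}(k)\|$ and $\|{\bf \epsilon}_{j,r_{(\iota)}}(k)\|^{2}$ from Section~\ref{app:evaluateR}, exactly as in the derivation of \eqref{ineq_type2_error_bound_subst}. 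This lumps every error contribution into two harmless families: terms proportional to the lagged Lyapunov quantity $\max_{k-H\le\hat k\le k;\,q\in V}\sum_{j}\|{\bf v}_{q}(\hat k)-{\bf x}^{*}\|^{2}$ that carry a factor $\alpha_{k}$ or $\alpha_{k}^{2}$, and fixed constant biases $\max_{(\iota)}\|{\bf x}^{*}-{\bf x}^{(\iota)}\|^{2}$ and $\max_{(\iota),\lambda}\|{\bf x}^{*}-{\bf x}^{\lambda,r_{(\iota)}}\|^{2}$, each carrying a factor $\alpha_{k}$.

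The decisive step, and the place where the absence of strong convexity bites, is the last term $-2\alpha_{k}\sum_{j}\sum_{(\iota)}\gamma_{(\iota)}a_{v_{j}(k),(\iota)}\bigl(b_{v_{j}(k),(\iota)}-(1-\gamma_{(0)})\alpha_{k}a_{v_{j}(k),(\iota)}\bigr)\|{\bf v}_{j}(k)-{\bf x}^{(\iota)}\|^{2}$. In Proposition~\ref{sum_v_i-x^*_type2} strong convexity forced $b_{v_{j}(k),(\iota)}=\langle\overrightarrow{u},\overrightarrow{v}\rangle\ge\sigma_{(\iota)}/a_{v_{j}(k),(\iota)}$, so the parenthesis was nonnegative for large $k$ by \eqref{suffcond2} and the whole term could be discarded. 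Without strong convexity I would invoke only convexity, which still gives $b_{v_{j}(k),(\iota)}=\langle\overrightarrow{u},\overrightarrow{v}\rangle\ge 0$, and split the term as $-2\alpha_{k}\sum\gamma_{(\iota)}a\,b\,\|\cdot\|^{2}+2(1-\gamma_{(0)})\alpha_{k}^{2}\sum\gamma_{(\iota)}a^{2}\|\cdot\|^{2}$. The first piece is nonpositive and is dropped; the second, after $a_{v_{j}(k),(\iota)}\le L$ and $\|{\bf v}_{j}(k)-{\bf x}^{(\iota)}\|^{2}\le 2\|{\bf v}_{j}(k)-{\bf x}^{*}\|^{2}+2\|{\bf x}^{*}-{\bf x}^{(\iota)}\|^{2}$, becomes one more $O(\alpha_{k}^{2})$ multiple of the lagged Lyapunov term plus one more $O(\alpha_{k})$ constant bias, i.e. it lands in exactly the two admissible families above.

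With that replacement the inequality acquires the shape required by the recursion lemma, Lemma~\ref{Lemma6a1}, namely $\sum_{l}\|{\bf v}_{l}(k+1)-{\bf x}^{*}\|^{2}\le\rho_{k}\max_{k-H\le\hat k\le k}\sum_{j}\|{\bf v}_{q}(\hat k)-{\bf x}^{*}\|^{2}+\alpha_{k}\eta$ with $\rho_{k}=(1-\mu)+O(\alpha_{k})+O(\alpha_{k}^{2})$. The crucial observation, just as in Proposition~\ref{Convergence_type1a}, is that the contraction $1-\mu$ is a fixed constant while every correction carries a vanishing $\alpha_{k}$, so there is a first index $k_{3}^{*}=\max(k^{c}_{3},k_{3,1},k_{3,2},H+1)$ beyond which $\rho_{k}<1$; taking $b_{k}=\alpha_{k}$ meets the lemma's hypotheses. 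Lemma~\ref{Lemma6a1} then yields $\sum_{i}\|{\bf v}_{i}(k)-{\bf x}^{*}\|^{2}\le\rho_{3}^{k}V_{0}^{'''}+b_{k}\eta_{3}$ for $k\ge\bar k_{3}+H+1$, and since $b_{k}\to 0$ we get $\|{\bf v}_{i}(k)-{\bf x}^{*}\|\to 0$ for every $i\in V$ with probability $1$. The conclusion for $\{{\bf x}_{i}(k)\}$ is the standard closing argument repeated throughout: in (SRDO-2) one has ${\bf R}_{i,r_{(\iota)}}(k)\to 0$ and $\alpha_{k}\nabla f^{(\iota)}({\bf v}_{i}(k))\to 0$ (the gradient bounded by $G_{f}$ through Lipschitzness and ${\bf v}_{i}(k)\to{\bf x}^{*}$), hence ${\bf x}_{i}(k+1)\to{\bf x}^{*}$.

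The main obstacle I expect is conceptual rather than computational: justifying \emph{exact} convergence to ${\bf x}^{*}$ despite a genuinely \emph{biased} gradient surrogate (Scenario~2 with partition minimizers ${\bf x}^{(\iota)}\neq{\bf x}^{*}$ and Condition~\ref{Condition-1} failing). This is viable only because every bias enters multiplied by the stepsize $\alpha_{k}$ while the consensus step supplies a \emph{constant} multiplicative contraction $1-\mu$; the resulting convolution $\sum_{j}(1-\mu)^{k-j}\alpha_{j}\,(\text{const})$ tends to $0$ even though $\sum_{k}\alpha_{k}=\infty$. Verifying that Lemma~\ref{Lemma6a1} absorbs both the $H$-step delay and the $\alpha_{k}$-scaled bias into its $b_{k}\eta_{3}$ residual, and that dropping the (now indefinite) strong-convexity term does not destroy the eventual $\rho_{k}<1$, is the delicate point, but it reduces to the stepsize bookkeeping of Assumption~\ref{A3.6} already performed in the proofs of Proposition~\ref{Convergence_type1a} and Proposition~\ref{sum_v_i-x^*_type2}.
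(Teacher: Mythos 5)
Your overall strategy --- reduce the iteration to an inequality of the form \eqref{eqw1} and invoke Lemma~\ref{Lemma6a1}, then close with the standard argument on (SRDO-2) --- is the same as the paper's, and your treatment of the indefinite last term (split off $-2\alpha_{k}\sum\gamma_{(\iota)}a\,b\,\|\cdot\|^{2}\le 0$ using only gradient monotonicity, drop it, and absorb the $O(\alpha_{k}^{2})$ remainder into the lagged-Lyapunov and constant-bias families) is exactly the device the paper already uses in \eqref{ineq_s2} for Proposition~\ref{Convergence_type1a}, so that part is sound. Where your route genuinely differs is the starting point: the paper's proof of Theorem~\ref{sum_v_i-x^*_type2a} does \emph{not} pass through Lemma~\ref{type2_error_bound} but returns to the fully general Lemma~\ref{general_error_bound} and bounds the descent term directly via \eqref{ineq_subs3}, i.e. $\langle\nabla f^{(\iota)}({\bf v}_{j}(k)),{\bf x}^{*}-{\bf v}_{j}(k)\rangle\le f^{(\iota)}({\bf x}^{*})-f^{(\iota)}({\bf x}^{(\iota)})\le a_{x^{*},(\iota)}b_{x^{*},(\iota)}\|{\bf x}^{*}-{\bf x}^{(\iota)}\|^{2}$, which converts the whole inner-product term into an $\alpha_{k}$-scaled constant bias using convexity alone and yields \eqref{ineq_type2_error_bound_substa}.

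The genuine gap in your plan is the appeal to Lemma~\ref{type2_error_bound} itself: as stated, that lemma assumes that every $f^{(\iota)}$ with $f^{(\iota)}({\bf x}^{*})=f^{(\iota)}({\bf x}^{(\iota)})$ is \emph{strongly} convex --- its proof uses this to conclude ${\bf x}^{*}={\bf x}^{(\iota)}$ on $I^{\complement}$ from uniqueness of the minimizer --- whereas Theorem~\ref{sum_v_i-x^*_type2a} grants only Assumptions~\ref{A3.1}--\ref{A3.6}, i.e.\ mere convexity. So your very first step invokes a lemma whose hypotheses are not available under the theorem's premises. The repair is either to note that $f^{(\iota)}({\bf x}^{*})=f^{(\iota)}({\bf x}^{(\iota)})$ already makes ${\bf x}^{*}$ a minimizer of $f^{(\iota)}$, hence $\nabla f^{(\iota)}({\bf x}^{*})=0$ and one may take ${\bf x}^{(\iota)}:={\bf x}^{*}$ on $I^{\complement}$ without strong convexity, re-deriving the Lemma~\ref{type2_error_bound} estimate under convexity only, or to do what the paper does and work from Lemma~\ref{general_error_bound} with \eqref{ineq_subs3}. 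With that single substitution your bookkeeping goes through and lands on the same bound $\sum_{i}\|{\bf v}_{i}(k)-{\bf x}^{*}\|^{2}\le\rho_{4}^{k}V_{0}^{''''}+b_{k}\eta_{4}$ and the same conclusion.
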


\begin{proof}
With errors as in Assumption~\ref{A3.4} we have $ \|R_{i}(k)\|$ and $\|\epsilon_{j,r_{(\iota)}}(k)\|$ as given in Appendix Section~\ref{app:evaluateR} for any algorithm scenario.
Then having Assumptions 1-5 holding satisfied we have \eqref{ineq_general_error_bound} inequality of Lemma~\ref{general_error_bound} satisfied. Then we can use   \eqref{ineq_general_error_bound} with the substitution of $ \| {\bf \epsilon}_{j,r_{(\iota)}}(k) \| $ and \eqref{ineq_subs3}, \eqref{res2} and $ \| {\bf v}_{j}(k) - {\bf x}^{(\iota)} \|^{2}  \leq \| {\bf v}_{j}(k) -\bar{\bar{\bf x}}^{(\iota)} \|^{2}  \leq 2 \| {\bf v}_{j}(k) -{\bf x}^{*} \|^{2} + 2 \| {\bf x}^{*} -\bar{\bar{\bf x}}^{(\iota)} \|^{2} $, $ \| {\bf v}_{j}(k) -{\bf x}^{*} \|^{2} \leq max_{k-H \leq \hat{k} \leq k ; q \in V} \| {\bf v}_{q}(\hat{k}) -  {\bf x}^{*}\|^{2} $ and $ \| {\bf x}^{*} - \bar{\bar{\bf x}}^{(\iota)} \| ^{2}  \leq  \| {\bf x}^{*} - \bar{\bf x}^{(\iota)} \| = \max_{(\iota)} \| {\bf x}^{*}- {\bf x}^{(\iota)} \| ^{2} $ to get % which is valid for $ k \geq k_{0} $ where $ k_{0} $ is dependent on $ f^{(\iota)}({\bf x}^{(\iota)}) $ and $ f^{(\iota)}({\bf x}^{*}) $ for all $ (\iota) \in I ^ {\complement} $ and is as in lemma, that is
{ \begin{equation}\label{ineq_type2_error_bound_substa}
\begin{split}
& \sum_{l=1}^{n}\|  {\bf v}_{l}(k+1)-  {\bf x}^{*}\|^{2}  \leq (1-\mu)\sum_{j=1}^{n}\| {\bf v}_{j}(k)- {\bf x}^{*}\|^{2}  \\
& + 4 p L \alpha_{k} \max\limits_{(\iota)} \| {\bf A}^{r_{(\iota)}} \| _{\infty} \|{\bf B}^{r_{(\iota)}}\| _{2,\infty}  \max_{k-H \leq \hat{k} \leq k ; q \in V} \sum_{j=1}^{n}\| {\bf v}_{q}(\hat{k}) -  {\bf x}^{*}\|^{2} \\
& + 8 p ( 1 - \gamma_{(0)} ) L^{2} \alpha_{k}^{2} \max_{(\iota)} \| {\bf A}^{r_{(\iota)}} \| ^{2} _{\infty} \|{\bf B}^{r_{(\iota)}}\| ^{2} _{2,\infty} 
\max_{k-H \leq \hat{k} \leq k ; q \in V} \sum_{j=1}^{n} \| {\bf v}_{q}(\hat{k}) -  {\bf x}^{*}\|^{2} \\
& + \alpha_{k}( \max\limits_{(\iota)} \| {\bf A}^{r_{(\iota)}} \| _{\infty} \|{\bf B}^{r_{(\iota)}}\| _{2,\infty}  + 4 ( 1 - \gamma_{(0)}) \alpha_{k} L)
p L \sum_{j=1}^{n}\max_{k-H \leq \hat{k} \leq k ; q \in V} \| {\bf v}_{q}(\hat{k}) -  {\bf x}^{*}\|^{2} \\
& \hspace{2cm} + \alpha_{k}n (2 + 4 (1-\gamma_{(0)}) \alpha_{k} L  ) p L \max_{(\iota)} \| {\bf x}^{*}- {\bf x}^{(\iota)} \| ^{2} \\
&  + \alpha_{k}n (1 + 4 (1-\gamma_{(0)}) \alpha_{k} L \max\limits_{(\iota)} \| {\bf A}^{r_{(\iota)}} \| _{\infty} \|{\bf B}^{r_{(\iota)}}\| _{2,\infty} ) \\
& \hspace{2cm} \times p L \max\limits_{(\iota)} \| {\bf A}^{r_{(\iota)}} \| _{\infty} \|{\bf B}^{r_{(\iota)}}\| _{2,\infty} \max_{(\iota)} \| {\bf x}^{*}- {\bf x}^{\lambda,r_{(\iota)}} \| ^{2} 
\end{split}
\end{equation}}

Let us choose $ \alpha_{k} = \frac{1}{{k+a}^{\theta}} $ where $ a \geq 0 $ and $ \theta \in (0,1] $.
Let $ k_{4,1} $ be the iteration at which $ 0 < (1-\mu) + 4 p L \alpha_{k} \max\limits_{(\iota)} \| {\bf A}^{r_{(\iota)}} \| _{\infty} \|{\bf B}^{r_{(\iota)}}\| _{2,\infty} \\ $ 
\\ $ + 8 p (1-\gamma_{(0)}) L^{2} \alpha_{k}^{2}  \max_{(\iota)} \| {\bf A}^{r_{(\iota)}} \| ^{2} _{\infty} \|{\bf B}^{r_{(\iota)}}\| ^{2} _{2,\infty} + 8 (1-\gamma_{(0)}) \alpha_{k}^{2}  p \gamma_{max}L ^{2} + \alpha_{k}( 1 + 4 ( 1 - \gamma_{(0)}) \alpha_{k} L)p L < 1 $ for the first time. And let $ k_{4,2} $ be the first iteration at which $ 4 p L \alpha_{k} \max\limits_{(\iota)} \| {\bf A}^{r_{(\iota)}} \| _{\infty} \|{\bf B}^{r_{(\iota)}}\| _{2,\infty} + 8 p (1-\gamma_{(0)}) L^{2} \alpha_{k}^{2} \max_{(\iota)} \| {\bf A}^{r_{(\iota)}} \| ^{2} _{\infty} \|{\bf B}^{r_{(\iota)}}\| ^{2} _{2,\infty} $ \\
$ + 8 (1-\gamma_{(0)}) \alpha_{k}^{2}  p \gamma_{max}L ^{2} + \alpha_{k}( \max\limits_{(\iota)} \| {\bf A}^{r_{(\iota)}} \| _{\infty} \|{\bf B}^{r_{(\iota)}}\| _{2,\infty}  + 4 ( 1 - \gamma_{(0)}) \alpha_{k} L)p L < \frac{1-\frac{1}{l}}{(B+2)^{\theta}}+\mu -1 $. Then one choice is $ b_{k} = \alpha_{k} $ and an $ l \geq 1 $ so that we can find a feasible $ k_{4,2} $ depending on the value of $ \mu $.

Since $ \max_{(\iota)} \| {\bf x}^{*}- {\bf x}^{(\iota)} \| ^{2} $ and $ \max_{(\iota)} \| {\bf x}^{*}- {\bf x}^{\lambda,r_{(\iota)}} \| ^{2} $ are fixed independent of $ k $ then \eqref{ineq_type2_error_bound_substa} is similar to the martingale inequality \eqref{eqw1} of Lemma~\ref{Lemma6a1} for $ k \geq k_{4}^{*}  $ where $ k_{4}^{*}=\max(k_{4,1},k_{4,2}) $.

By the result of Lemma~\ref{Lemma6a1} we have for $ v_{k}= \sum_{i=1}^{n} \| {\bf v}_{i}(k) - {\bf x}^{*} \|^{2} $ that
{ \begin{equation}
\begin{split}
    \sum_{i=1}^{n} \| {\bf v}_{i}(k) - {\bf x}^{*} \|^{2} \leq \rho_{4} ^{k}  V_{0}^{''''} + b_{k} \eta_{4}
\end{split}    
\end{equation}}
for $ k \geq \bar{k}_{4} + H + 1 $ where $ \rho_{4}=\rho $, $ V_{0}^{''''}=V_{0} $, $ H = B $ and $ \bar{k}_{4}=\bar{k} $ and $ k_{4}^{*}=k^{*} $ are as in the lemma, i.e., take $ \bar{k}_{4}=\max(k_{4}^{*}-1,H) $ and where $ \eta_{4}=\eta > 0 $ as substituted from the inequality \eqref{ineq_type2_error_bound_substa} by using the lemma. Therefore, as $ k \rightarrow \infty $ we have $ \sum_{i=1}^{n} \| {\bf v}_{i}(k) - {\bf x}^{*} \|^{2} \rightarrow 0 $ since $  b_{k} \rightarrow 0 $. That is, $  \| {\bf v}_{i}(k) - {\bf x}^{*} \| \rightarrow 0 $ for all $ i \in V $. 
Then in view of (SRDO-2) where $ {\bf x}_{i}(k+1) = {\bf v}_{i}(k) -\alpha_{k}\nabla{f}^{(\iota)}({\bf v}_{i}(k)) + {\bf R}_{i,r_{(\iota)}}(k) $ and since ${\bf R}_{i,r_{(\iota)}}(k) \rightarrow 0 $ because  $  \| {\bf v}_{i}(k) - {\bf x}^{*} \| \rightarrow 0 $, $ \alpha_{k} \rightarrow 0 $ and $ \alpha_{k} \nabla{f}^{(\iota)}({\bf v}_{i}(k))\rightarrow 0 $ since $ \alpha_{k} \rightarrow 0 $   where $ \nabla{f}^{(\iota)}({\bf v}_{i}(k)) \leq G_{f} $ since $ \nabla{f}^{(\iota)}({\bf v}_{i}(k))=  \nabla{f}^{(\iota)}({\bf v}_{i}(k))-  \nabla{f}^{(\iota)}({\bf x}^{(\iota)}) \leq L \| {\bf v}_{i}(k)- {\bf x}^{*} \| + L \| {\bf x}^{*} - {\bf x}^{(\iota)} \| $, $\| {\bf x}^{*} - {\bf x}^{(\iota)} \| =const. $ and ${\bf v}_{i}(k) \rightarrow {\bf x}^{*} $. Therefore, $ {\bf x}_{i}(k+1) = {\bf v}_{i}(k) = {\bf x}^{*} $ as $ k \rightarrow \infty $.

\end{proof}

\begin{corollary}
%If Theorem~\ref{Main Theorem} holds then Proposition~1, 2, 5 and 6 also hold.
If Theorem~\ref{sum_v_i-x^*_type2a} holds then Proposition \ref{Convergence_type1}, \ref{Convergence_type1a}, \ref{sum_v_i-x^*_type2} also hold.
\end{corollary}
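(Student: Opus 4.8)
The plan is to argue by \emph{subsumption}: Theorem~\ref{sum_v_i-x^*_type2a} is the broadest of the four statements, so each Proposition should follow simply by checking that its hypotheses are a special case of the Theorem's and that its conclusion is verbatim the same. First I would recall that the proof of Theorem~\ref{sum_v_i-x^*_type2a} rests on the \emph{general} error bound of Lemma~\ref{general_error_bound}, together with the ``for all scenarios irrespective of which part they belong'' bounds on $\|{\bf R}_{j,r_{(\iota)}}(k)\|$ and $\|{\bf \epsilon}_{j,r_{(\iota)}}(k)\|$ established in Section~\ref{app:evaluateR}, and the martingale estimate of Lemma~\ref{Lemma6a1}. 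The essential point is that this argument invokes \emph{only} Assumptions \ref{A3.1}--\ref{A3.6}, imposes no structural restriction on the $f^{(\iota)}$ beyond convexity and Lipschitz gradients, and is valid in every scenario (Division~1 and Division~2 alike). Thus the Theorem is the umbrella result under which the three Propositions sit.

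Next I would match each Proposition against this umbrella. For Proposition~\ref{sum_v_i-x^*_type2}, the only additional hypothesis---strong convexity of those $f^{(\iota)}$ for which $f^{(\iota)}({\bf x}^{*})=f^{(\iota)}({\bf x}^{(\iota)})$---merely narrows the admissible objective, so Assumptions \ref{A3.1}--\ref{A3.6} continue to hold and the Theorem applies unchanged. For Propositions~\ref{Convergence_type1} and \ref{Convergence_type1a} the extra requirements are $f^{(\iota)}({\bf x}^{*})=f^{(\iota)}({\bf x}^{(\iota)})$ for all $(\iota)$ (with strong convexity added in the former) together with the restriction to Division~1 scenarios; since Division~1 is a sub-collection of the full family of scenarios already covered by the universal error bounds, and the equality and convexity conditions are compatible with Assumption~\ref{A3.1}, the Theorem's premises are again met. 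In every case the assertion is word for word the same---the sequences $\{{\bf x}_{i}(k)\}$ and $\{{\bf v}_{i}(k)\}$ converge to the same random point of $\mathcal{X}^{*}$ with probability $1$---so invoking Theorem~\ref{sum_v_i-x^*_type2a} on the restricted data delivers exactly the Proposition in question.

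The only genuine verification, and hence the main (though modest) obstacle, is to confirm that none of the Propositions carries a hypothesis that is \emph{not} already implied by Assumptions \ref{A3.1}--\ref{A3.6}: one must check that ``Division~1'' is a sub-collection of the scenarios handled by the all-scenario bounds used in the Theorem, and that the conditions $f^{(\iota)}({\bf x}^{*})=f^{(\iota)}({\bf x}^{(\iota)})$ and strong convexity only \emph{shrink} the problem class rather than demanding anything the Theorem did not already provide. Once this containment of hypotheses and the coincidence of conclusions are recorded, each of Propositions~\ref{Convergence_type1}, \ref{Convergence_type1a}, and \ref{sum_v_i-x^*_type2} follows immediately from Theorem~\ref{sum_v_i-x^*_type2a}, which is precisely the statement of the Corollary.
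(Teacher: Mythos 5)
Your proposal is correct and follows essentially the same route as the paper, whose entire proof is the one-line observation that the premises of Propositions~\ref{Convergence_type1}, \ref{Convergence_type1a}, and \ref{sum_v_i-x^*_type2} subsume those of Theorem~\ref{sum_v_i-x^*_type2a} while sharing its conclusion. Your version simply spells out the hypothesis-containment check (Division~1 scenarios, the condition $f^{(\iota)}({\bf x}^{*})=f^{(\iota)}({\bf x}^{(\iota)})$, and strong convexity only shrink the problem class) that the paper leaves implicit.
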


\begin{proof}
%Check Theorem~\ref{Main Theorem} proof.
Premises of propositions are also satisfied from premise of Theorem~\ref{sum_v_i-x^*_type2a}. 
\end{proof}

%\subsection{Sufficient Condition of Convergence Implications}

%We see that one sufficient condition, which is used in this analysis, is $  p \geq  \frac{7}{ 12 \gamma_{min}} $. Thus, if the probability of the worst connection partition decreases (i.e., $ \gamma_{min} $ decreases) then $ p $ needs to increase and vice versa. We can therefore partition the function as low as $ \lceil \frac{7}{ 12 \gamma_{min}} \rceil $. This means that a larger partition size of $  p > \lceil \frac{7}{ 12 \gamma_{min}} \rceil $ are also applicable. Therefore, by having more partitions we can compensate for the low probability of the worst connection partition. By having more partitions, the worst connection partition $ f^{(\iota)^{wc}} $ will be made to contribute to a small portion of the summand $ \sum_{(\iota)=1}^{p}f^{(\iota)}(x) $ thus having a small effect on the convergence of the algorithm.

\subsection{Martingale 1}

\begin{lemma}\label{Lemma6a}
Assume the following inequality holds a.s. for all $ k \geq k^{*} $,

{ \begin{equation}\label{eqw}
    v_{k+1} \leq a_{1}v_{k}+ a_{2,k}\max_{k-B \leq \hat{k} \leq k} v_{\hat{k}}
\end{equation}}
$ v_{k}$, $ a_{1} $ and $ a_{2,k} $ are non-negative random variables where $ a_{1} + a_{2,k} \leq 1 $ and $ \{ a_{2,k} \} $ is a decreasing sequence. Then if for $ \rho = (a_{1}+a_{2,\bar{k}})^{\frac{1}{B+1}} $ where $ \bar{k} \geq k^{*} - 1 $ and $ \bar{k} \geq B $ (i.e., we index from $ k =0 $) we have
{ \begin{equation}
     v_{\bar{k}+n} \leq \rho^{\bar{k}+B+1}V_{0} \ \ \ a.s.
 \end{equation}}
for $ 1 \leq n \leq B + 1$
and
{ \begin{equation}\label{rec1}
     v_{k} \leq \rho^{k}V_{0} \ \ \ a.s.
 \end{equation}}
for all $ k \geq \bar{k}+B+1 $ where $ V_{0} > 0 $ as in proof and $ \rho $ as before.
\end{lemma}

\begin{proof}
See Appendix~E for proof. 
\end{proof}

\subsection{Martingale 2}

\begin{lemma}\label{Lemma6a1}
Assume the following inequality holds a.s. for all $ k \geq k^{*} $,

{ \begin{equation}\label{eqw1}
    v_{k+1} \leq a_{1}v_{k}+ a_{2,k}\max_{k-B \leq \hat{k} \leq k} v_{\hat{k}}+a_{3,k}
\end{equation}}
$ v_{k}$, $ a_{1} $, $ a_{2,k} $  and $ a_{3} $ are non-negative random variables where $ a_{1} + a_{2,k} \leq 1 $ and $ \{ a_{2,k} \} $ is a decreasing sequence. Then if for $ \rho = (a_{1}+a_{2,\bar{k}})^{\frac{1}{B+1}} $ where $ \bar{k} \geq k^{*} - 1 $ and $ \bar{k} \geq B $ (i.e., we index from $ k =0 $), $ a_{1} \leq 1 -\mu $, $ a_{2,\bar{k}} \leq \frac{1-\frac{1}{l}}{(B+2)^{\theta}} + \mu - 1 $  and $ a_{3,k} \leq \frac{b_{k+1}}{l}\eta$ where $ b_{k}=\frac{1}{(k+a)^{\theta}} $, $ l \geq 1 $, $ \theta \in (0,1] $ and $ \eta =\frac{a_{3}}{1-a_{1}-a_{2,\bar{k}}} $ we have
{ \begin{equation}
     v_{\bar{k}+n} \leq \rho^{\bar{k}+B+1}V_{0} + b_{\bar{k}+n} \eta \ \ \ a.s.
 \end{equation}}
for $ 1 \leq n \leq B + 1$
and
{ \begin{equation}\label{rec11}
     v_{k} \leq \rho^{k}V_{0} + b_{k} \eta \ \ \ a.s.
 \end{equation}}
for all $ k \geq \bar{k}+B+1 $ where $ V_{0} > 0 $ as in proof and $ \rho $ and $ \eta $ as before.
\end{lemma}

\begin{proof}
See Appendix~F for proof. 
\end{proof}

\section{Convergence Rate}\label{sec-convergence_rate}

In this subsection are going to find the expected convergence rate of SRDO under any scenario where the function $ f $ to be minimized is a strongly convex formed of $ p $ functions $ f^{(\iota)} $ that are strongly convex.

Then by elaborating upon Lemma~\ref{general_error_bound} for the case above we have 

{ \begin{equation}
\begin{split}
& \sum_{l=1}^{n}\|  {\bf x}_{l}(k+1)-  {\bf x}^{*}\|^{2}  \leq (1-\mu)\sum_{j=1}^{n}\| {\bf x}_{j}(k)- {\bf x}^{*}\|^{2}  \\
& + \frac{1}{1-\mu} 6 \alpha_{k} n |I| \gamma_{max} L  \max_{(\iota)} \| {\bf x}^{*}- {\bf x}^{(\iota)} \| ^{2}  \\
& + \frac{1}{1-\mu} 4 \alpha_{k}|I| \sum_{j=1}^{n}p \gamma_{max} L \max_{k-H \leq \hat{k} \leq k ; q \in V} \| {\bf v}_{q}(\hat{k}) -  {\bf x}^{*}\|^{2} \\
& + \frac{1}{1-\mu} 4 p L \alpha_{k}\sum_{j=1}^{n}\| {\bf A}^{r_{(\iota)}} \| _{\infty} \|{\bf B}^{r_{(\iota)}}\| _{2,\infty}  \max_{k-H \leq \hat{k} \leq k ; q \in V} \| {\bf v}_{q}(\hat{k}) -  {\bf x}^{*}\|^{2} \\
& + \frac{1}{1-\mu} 8 p ( 1 - \gamma_{(0)} ) L^{2} \alpha_{k}^{2} \sum_{j=1}^{n}\| {\bf A}^{r_{(\iota)}} \| ^{2} _{\infty} \|{\bf B}^{r_{(\iota)}}\| ^{2} _{2,\infty} \max_{k-H \leq \hat{k} \leq k ; q \in V} \| {\bf v}_{q}(\hat{k}) -  {\bf x}^{*}\|^{2} \\
& + \frac{1}{1-\mu} \alpha_{k}n (1 + 4 (1-\gamma_{(0)}) \alpha_{k} L \| {\bf A}^{r_{(\iota)}} \| _{\infty} \|{\bf B}^{r_{(\iota)}}\| _{2,\infty}  ) \\
& \hspace{2cm} \times p L \| {\bf A}^{r_{(\iota)}} \| _{\infty} \|{\bf B}^{r_{(\iota)}}\| _{2,\infty} \max_{(\iota)} \| {\bf x}^{*}- {\bf x}^{\lambda,r_{(\iota)}} \| ^{2} 
\end{split}
\end{equation}}
since $ \sum_{l=1}^{n}\|  {\bf v}_{l}(k)-  {\bf x}^{*}\|^{2} \leq (1-\mu) \sum_{l=1}^{n}\|  {\bf x}_{l}(k)-  {\bf x}^{*}\|^{2} $.

\begin{lemma}\label{convergence-lemma}
Let $ \{d_{k} \} $ and $ \{ u_{k} \} $ be scalar sequences such that $ d_{k} \leq c d_{k-1} + u_{k-1} $ for all $ k \geq 1$ and some scalar $ c \in (0,1) $. Then, $ \lim \sup_{k \rightarrow \infty} d_{k} \leq \frac{1}{1-c} \lim \sup _{ k \rightarrow \infty} u_{k} $. 
\end{lemma}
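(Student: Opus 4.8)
The plan is to prove this via a standard unrolling of the one-step recursion followed by passage to the limit superior. First I would dispose of the trivial case: if $\limsup_{k\to\infty} u_k = +\infty$, the right-hand side is $+\infty$ and there is nothing to prove. So I would assume $U \triangleq \limsup_{k\to\infty} u_k < \infty$, and fix an arbitrary $\epsilon > 0$. By the definition of the limit superior, there is an index $K$ such that $u_k \leq U + \epsilon$ for all $k \geq K$.

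Next I would iterate the hypothesis $d_k \leq c d_{k-1} + u_{k-1}$ starting from $k = K$. Substituting the bound $u_{k-1} \leq U + \epsilon$ valid for $k - 1 \geq K$ and unrolling $n$ steps gives, for $n \geq 1$,
\begin{equation}
d_{K+n} \leq c^{n} d_{K} + (U + \epsilon)\sum_{j=0}^{n-1} c^{j} \leq c^{n} d_{K} + \frac{U + \epsilon}{1 - c},
\end{equation}
where the geometric sum is bounded using $\sum_{j=0}^{n-1} c^{j} \leq \sum_{j=0}^{\infty} c^{j} = \frac{1}{1-c}$, which is where the assumption $c \in (0,1)$ is essential. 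Since $d_K$ is a fixed finite scalar and $c \in (0,1)$, the term $c^{n} d_{K} \to 0$ as $n \to \infty$.

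Taking $\limsup$ over $n$ on both sides, and using that shifting the index by the fixed constant $K$ does not change the limit superior (i.e. $\limsup_{n\to\infty} d_{K+n} = \limsup_{k\to\infty} d_k$), I obtain
\begin{equation}
\limsup_{k\to\infty} d_k \leq \frac{U + \epsilon}{1 - c}.
\end{equation}
Finally, since $\epsilon > 0$ was arbitrary, letting $\epsilon \to 0$ yields $\limsup_{k\to\infty} d_k \leq \frac{U}{1-c} = \frac{1}{1-c}\limsup_{k\to\infty} u_k$, as claimed.

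The only point requiring care—the mild obstacle—is the justification that $c^{n} d_K \to 0$, which relies on $d_K$ being finite; this is guaranteed because the $d_k$ are real scalars. One should also keep the order of quantifiers correct: the index $K$ depends on $\epsilon$, but this causes no difficulty since the geometric bound $\frac{1}{1-c}$ is uniform in $n$ and the $\epsilon \to 0$ limit is taken only after the $\limsup$ in $n$. No martingale or probabilistic machinery is needed here; this is a purely deterministic real-analysis estimate used downstream to convert the per-iteration contraction inequality into an asymptotic bound on $\sum_{l}\|{\bf x}_l(k) - {\bf x}^{*}\|^{2}$.
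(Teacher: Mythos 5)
Your proof is correct and is the standard unrolling-plus-geometric-sum argument for this classical fact; the paper itself states this lemma without proof (it is invoked as a known auxiliary result), so there is no competing argument to compare against and your derivation fills that gap adequately. The only cosmetic caveat is that the intermediate bound $(U+\epsilon)\sum_{j=0}^{n-1}c^{j}\leq \frac{U+\epsilon}{1-c}$ presumes $U+\epsilon\geq 0$; taking $\limsup_{n\to\infty}$ of the partial geometric sums directly gives the same conclusion without this assumption, and in the paper's application the sequences $d_{k}$ and $u_{k}$ are sums of squared norms, hence nonnegative, so the point is immaterial.
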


But using Proposition~\ref{sum_v_i-x^*_type2} we have inequality \eqref{ineq_type2_error_bound_subst} for $ k \geq k_{3}^{*} $ in the form \eqref{eqw1} is valid. Thus, using Lemma~\ref{Lemma6a1} we get $ \sum_{i=1}^{n} \| {\bf v}_{i}(k) - {\bf x}^{*} \|^{2} \leq \rho_{3}^{k}V_{0}^{'''} + b_{k} \eta_{3} $ where $ b_{k}= \alpha_{k} $, $ \rho_{3} $ and $ \eta_{3} $ (see Appendix~G) for $ k \geq \bar{k}_{3} + H + 1 $. 

\begin{comment}
{ \begin{equation}\label{ConvexConvergenceRateRho}
\begin{split}
\rho_{3}= & ((1-\mu) + 4 p L \alpha_{\bar{k}_{3}} \| {\bf A}^{r_{(\iota)}} \| _{\infty} \|{\bf B}^{r_{(\iota)}}\| _{2,\infty} \\
& + 8 p (1-\gamma_{(0)}) L^{2} \alpha_{\bar{k}_{3}}^{2}  \| {\bf A}^{r_{(\iota)}} \| ^{2} _{\infty} \|{\bf B}^{r_{(\iota)}}\| ^{2} _{2,\infty} \\
& + 8 (1-\gamma_{(0)}) \alpha_{\bar{k}_{3}}^{2}  p \gamma_{max}L ^{2} + \alpha_{k}( 1 + 4 ( 1 - \gamma_{(0)}) \alpha_{k} L)p L)^{\frac{1}{H+1}}
\end{split}
\end{equation}}

\vspace{-0.5cm}
{ \begin{equation}\label{ConvexConvergenceRateeta}
\begin{split}
\eta_{3} = 
 { \frac{ 2 \alpha_{\bar{k}_{3}} \sum_{j=1}^{n}(|I|+2p)\gamma_{max} L \| {\bf x}^{*} - \bar{\bf x}^{(\iota)} \| ^{2} +  \frac{1}{1-\mu} \alpha_{k}n (1 + 4 (1-\gamma_{(0)}) \alpha_{k} L \| {\bf A}^{r_{(\iota)}} \| _{\infty} \|{\bf B}^{r_{(\iota)}}\| _{2,\infty}  ) p L \| {\bf A}^{r_{(\iota)}} \| _{\infty} \|{\bf B}^{r_{(\iota)}}\| _{2,\infty} \max_{(\iota)} \| {\bf x}^{*}- {\bf x}^{\lambda,r_{(\iota)}} \| ^{2} }{ \mu - 4 p L \alpha_{\bar{k}_{3}} \| {\bf A}^{r_{(\iota)}} \| _{\infty} \|{\bf B}^{r_{(\iota)}}\| _{2,\infty} - 8 p (1-\gamma_{(0)}) L^{2} \alpha_{\bar{k}_{3}}^{2}  \| {\bf A}^{r_{(\iota)}} \| ^{2} _{\infty} \|{\bf B}^{r_{(\iota)}}\| ^{2} _{2,\infty} - 8 (1-\gamma_{(0)}) \alpha_{\bar{k}_{3}}^{2}  p \gamma_{max}L ^{2} + \alpha_{k}( 1 + 4 ( 1 - \gamma_{(0)}) \alpha_{k} L)p L }}
 \end{split}
 \end{equation}}
\end{comment}

Using Lemma~\ref{convergence-lemma} we get
{ \begin{equation}
\begin{split}
& \sum_{j=1}^{n}\| {\bf x}_{j}(k)- {\bf x}^{*}\|^{2}  \leq
  \frac{6n|I|p\gamma_{max}L}{\mu(1-\mu)} \alpha_{k} \max_{(\iota)} \| {\bf x}^{*}- {\bf x}^{(\iota)} \| ^{2} \\
& + \frac{1}{\mu(1-\mu)} \alpha_{k}n (1 + 4 (1-\gamma_{(0)}) \alpha_{k} L \| {\bf A}^{r_{(\iota)}} \| _{\infty} \|{\bf B}^{r_{(\iota)}}\| _{2,\infty}  ) \\
& \hspace{2cm} \times p L \| {\bf A}^{r_{(\iota)}} \| _{\infty} \|{\bf B}^{r_{(\iota)}}\| _{2,\infty} \max_{(\iota)} \| {\bf x}^{*}- {\bf x}^{\lambda,r_{(\iota)}} \| ^{2} \\
& + \frac{\alpha_{k}} {\mu(1-\mu)} ( 4 p L \alpha_{\bar{k}_{3}} \| {\bf A}^{r_{(\iota)}} \| _{\infty} \|{\bf B}^{r_{(\iota)}}\| _{2,\infty} + 8 p (1-\gamma_{(0)}) L^{2} \alpha_{\bar{k}_{3}}^{2}  \| {\bf A}^{r_{(\iota)}} \| ^{2} _{\infty} \|{\bf B}^{r_{(\iota)}}\| ^{2} _{2,\infty} \\
& + 8 (1-\gamma_{(0)}) \alpha_{\bar{k}_{3}}^{2}  p \gamma_{max}L ^{2} + \alpha_{k}( 1 + 4 ( 1 - \gamma_{(0)}) \alpha_{k} L)p L) \eta_{3}
\end{split}
\end{equation}}
%for $ k \geq \bar{k}_{3} + H + 1 $, 
%which is the expected convergence rate of SRDO under scenarios of Division~1 where the function $ f $ to be minimized is a strongly convex function formed of $ p $ strongly convex functions with $ f^{(\iota)}({\bf x}^{*})>f^{(\iota)}({\bf x}^{i}) $ for at least one $ (\iota) $.
\subsection{Convergence Rate for Strongly Convex Function with $ f^{(\iota)}({\bf x}^{*})=f^{(\iota)}({\bf x}^{(\iota)}) $ for all $ (\iota) $ under scenarios of Division~1}

The convergence rate of SRDO in minimizing strongly convex function $ f $ formed of $ p $ strongly convex functions  with $ f^{(\iota)}({\bf x}^{*})=f^{(\iota)}({\bf x}^{i}) $ for all $ (\iota) $ under scenarios of Division~1 can be deduced by applying Proposition~\ref{Convergence_type1} and is 
{\small \begin{equation}\label{convergence_rate_strongly_convex_Martingale_1}
\begin{split}
 \mathbb{E}  [ \sum_{i=1}^{n} & \| {\bf v}_{i}  (k)- {\bf x}^{*} \| ^{2}  ] \leq \\
 & ( 1 - \mu + 4 (1-\gamma_{(0)}) L \alpha_{\bar{k}_{1}} \| {\bf A}^{r_{(\iota)}} \| _{\infty} \|{\bf B}^{r_{(\iota)}}\| _{2,\infty}  ( 1 + 2 L \alpha_{\bar{k}_{1}}  \| {\bf A}^{r_{(\iota)}} \|  _{\infty} \|{\bf B}^{r_{(\iota)}}\|  _{2,\infty} )^{\frac{k}{H+1}}V_{0}  
\end{split}
\end{equation}}
for $ k \geq \bar{k}_{1} + H + 1 $.

\section{Numerical Simulation}

Our aim in the simulation is to verify the convergence of the proposed algorithm while showing its convergence rate for different algorithm's scenarios.

In this section, we restrict the optimization problem to the following unconstrained convex optimization problem on a parameter server network  %associated with the objective function
\begin{equation}\label{objective.simulation}
 \arg\min_{x \in \mathbf{R}^{N}} \|  {\bf G}{\bf x}- {\bf y} \|_{2}^{2},\end{equation}
where the network contains $ n $ server nodes, $ G $ is a random matrix of size $M\times N$ whose entries are independent and identically distributed standard normal random variables, and
\begin{equation} y= {\bf G} {\bf x}_o\in \mathbb{R}^{M}\end{equation}
has entries of ${\bf x}_o$ that are identically independent random variables sampled from the uniform bounded random distribution  between $ -1 $ and $ 1 $.
The solution ${\bf x}^{*}$ of the optimization problem above  is the least squares solution of the overdetermined system  %of equations
$ y  = {\bf G} {\bf x}_o, \ x_o\in \mathbb{R}^N$. % For both networks we set $ m = N = 225 $.
We demonstrate the  performance of SRDO to solve the convex optimization problem \eqref{eq1}, and match it with the calculated convergence rates.

Accordingly, the random measurement matrix $ {\bf G} $,  the measurement data $ y$,  and the objective function  $ f({\bf x}):= \|  {\bf G} {\bf x}- {\bf y}  \|_{2}^{2} $ in \eqref{eq1} as follows:
 $f({\bf x})= \sum_{i=1}^{p} f^{(\iota)}({\bf x}):= \sum_{i=1}^{p} \| {\bf G}_i {\bf x} - {\bf y}_i\|_2^2.  $
 
In the simulations, without a loss of generality, we assume the number of workers' partitions equals the number of server nodes; i.e., $ p = n $. We also require that we have one replica per partition and that the repartitioned parts have the same size; i.e., the number of rows in $ {\bf G}_{i} $ and the lengths of vectors $ {\bf y}_i, 1\le i\le n$ are the same, respectively and obviously equal. Then we can partition the network
around $ m $ worker nodes, where the number of workers per  replica of partition $ (\iota) $, $ n_{r_{(\iota)}} = n_{(\iota)} = \emph{const} $ and $ s_{r_{(\iota)}}=s_{(\iota)} = \emph{const} $, the maximum number of allowed stragglers per replica of partition $ (\iota) $.
We further require that $ \gamma_{(0)} $ is small (i.e., $ \gamma_{(0)}= 0.05 $) and $=\gamma_{(\iota)} = \frac{1}{p} (1-\gamma_{(0)}) $, that is all partitions are connected to a server at the pull step with the same probability and the probability of disconnection of a server from all partitions at each pull step is relatively small.

For each simulation, we ran the experiment 100 times and average the results. Thus, we present the simulation for $ 100 $ samples of parameter server networks of $ p =5 $ equal sized partitions $ (\iota) $, with unanimous $ n_{r_{(\iota)}} = n_{(\iota)} = 3 $, $ s_{r_{(\iota)}} = s_{(\iota)} = 1 $ and $ n_{(\iota)} = 5 $, $ s_{(\iota)} = 2 $ for $ 1 \leq (\iota) \leq p $, respectively (i.e., $ m_{(\iota)} = 300$ and $ 500 $ where $ 1 \leq (\iota) \leq p $, and $ M= 1500 $ and $ 2500 $ and $ \bar{m} = 100 $, $ N =100 $, respectively). Here, $ \bar{m} $ stands for the number of rows in a partition sub-partition, which is assumed equal all over the network and $ m_{(\iota)} $ is the number of rows used by partition $ (\iota) $. That is, $ \bar{m} $ corresponds to the functions $ f^{r_{(\iota)}}_{\lambda} $, where $ \lambda $ corresponds to repartition $ 1 \leq \lambda \leq n_{r_{(\iota)}} $ in an arbitrary replica $ r_{(\iota)} $ of partition $ (\iota) $. Each worker node finds its local coded gradient through a combination of uncoded local gradients computed through local optimization problems of  overdetermined linear systems of equations. That is, for partition $ (\iota) $ where $ n_{(\iota)}=3 $, each worker takes a total of rows which is at least $ \bar{m} $ to calculate its coded gradient according to the used coding scheme in \citep{tandon17a}.

The stepsizes $\alpha_{k}$ are chosen such that
$ \alpha_{k} = \frac{1}{(k+a)^{\theta}}$ where $ a \geq 0 $ and $ \theta \in (0,1] $. 

We define the absolute error
${\rm AE}:= \max_{1\le i\le n} \frac{\|{\bf x}_i(k)-{\bf x}_o\|_2}{\|{\bf x}_0\|_2}$ 
and  consensus error
$ {\rm CE}:= \max_{1\le i\le n} \frac{\|{\bf x}_i(k)-\bar{\bf x}(k)\|_2}{\|{\bf x}_o\|_2} $
which are used to measure  the performance of SRDO. \\

Moreover, in this simulation we have used a fixed coding scheme in each experiment. We could have adapted other coding schemes that can be adjusted to improve the convergence rate as the performance of SRDO is dependent on the used coding scheme. To that end, we can effectively improve the performance of our algorithm by adjusting the coding scheme in a manner dependent on the factors that govern this performance such as independence and probability of stragglers, partitions' connections' probabilities, full disconnection probability, prioritization of stale gradients, the delay uniform bound $ H $ of the allowed delayed coded, respectively uncoded gradients used in the gradient computation schemes. If done efficiently, by the use of a learning algorithm for example we can aim to outperform the Centralized-SGD.

%We analyze SRDO performance relative to Stochastic Gradient Descent algorithm on centralized networks (i.e., master/workers) Centralized-SGD with no failures or same type of failures as SRDO scenario used, respectively. As the benefit of robustness to stragglers is evident in SRDO, we aim to outperform Centralized-SGD with full connection since Centralized-SGD with stragglers will definitely have a degraded performance relative to SRDO due its inability to compute the inexact full batch gradient. To that end, we show that SRDO can match or even outperform Centralized-SGD with full connection if effectively adapted since there are many threads that can be calibrated to allow that. Beginning from an adaptable coding scheme and ending in the delay uniform bound allowed for gradient evaluation compensation.

For our chosen coding scheme, we compare SRDO in gradient computation scenario~1 to Centralized–SGD with full connection in Fig~\ref{Fig.3}. We see that both have almost the same performance, this is due to the fact that they both compute the full batch inexact gradient although SRDO is mitigating the effect of an allowed number of stragglers. That is, Centralized-SGD has an equivalent estimate of the gradient at the expense of a higher communication cost.
%Notice as mentioned earlier, we compared SRDO with respect to Centralized-SGC with no failures (i.e., all workers send their gradient evaluations). If we had compared it with Centralized-SGD with failures (i.e., master ignores the stragglers and compute a partial inexact gradient due to disconnection) it would have definitely converged faster. Moreover, for this precise comparison in Fig~\ref{Fig.3} we could have enhanced the performance of SRDO by either increasing $ \theta $ (i.e., decreasing stepsize) or finding a better coding scheme.

\vspace{-1cm}
\begin{figure}[H]
%\hspace{0.5cm}
%\includegraphics[width=14cm,height=10cm,angle=0]{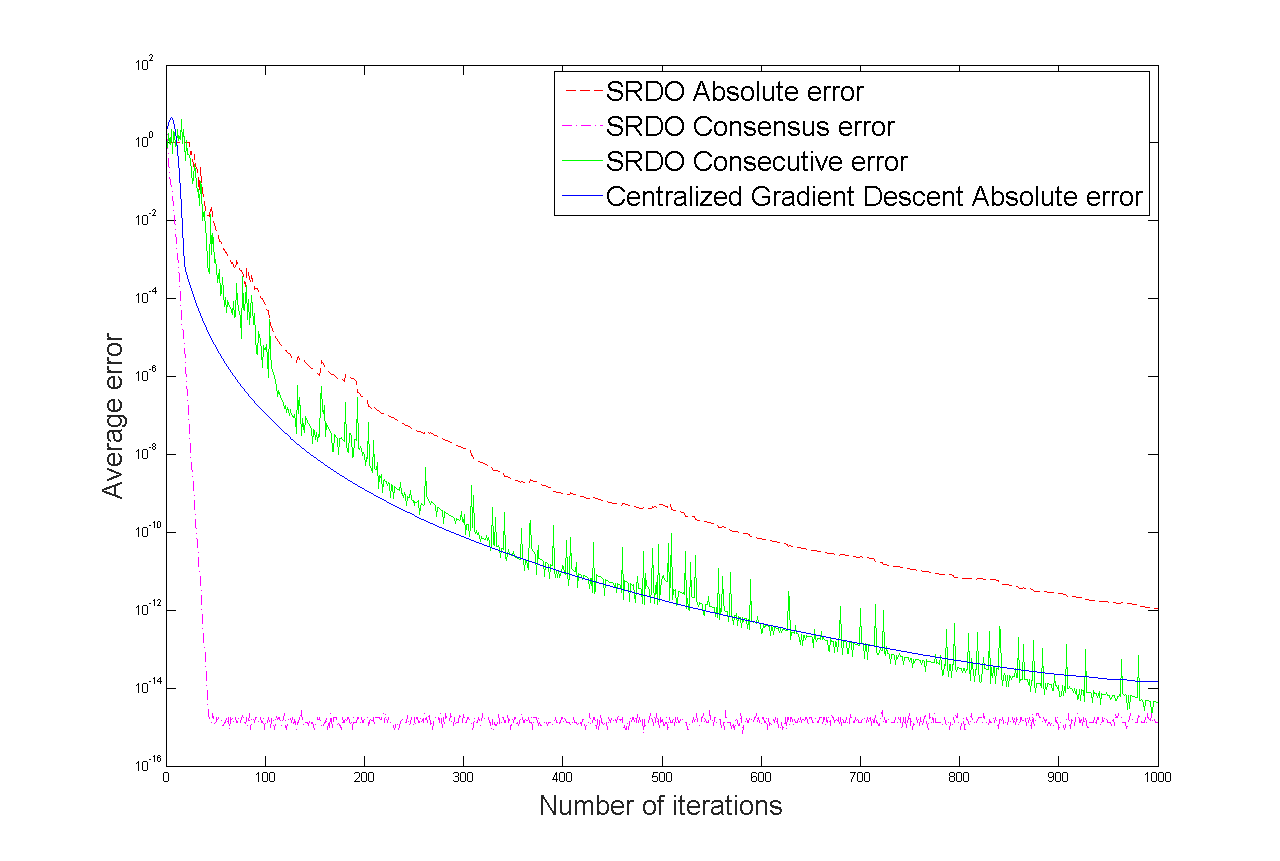}
\includegraphics[bb=0 0 800 800,scale=0.3]{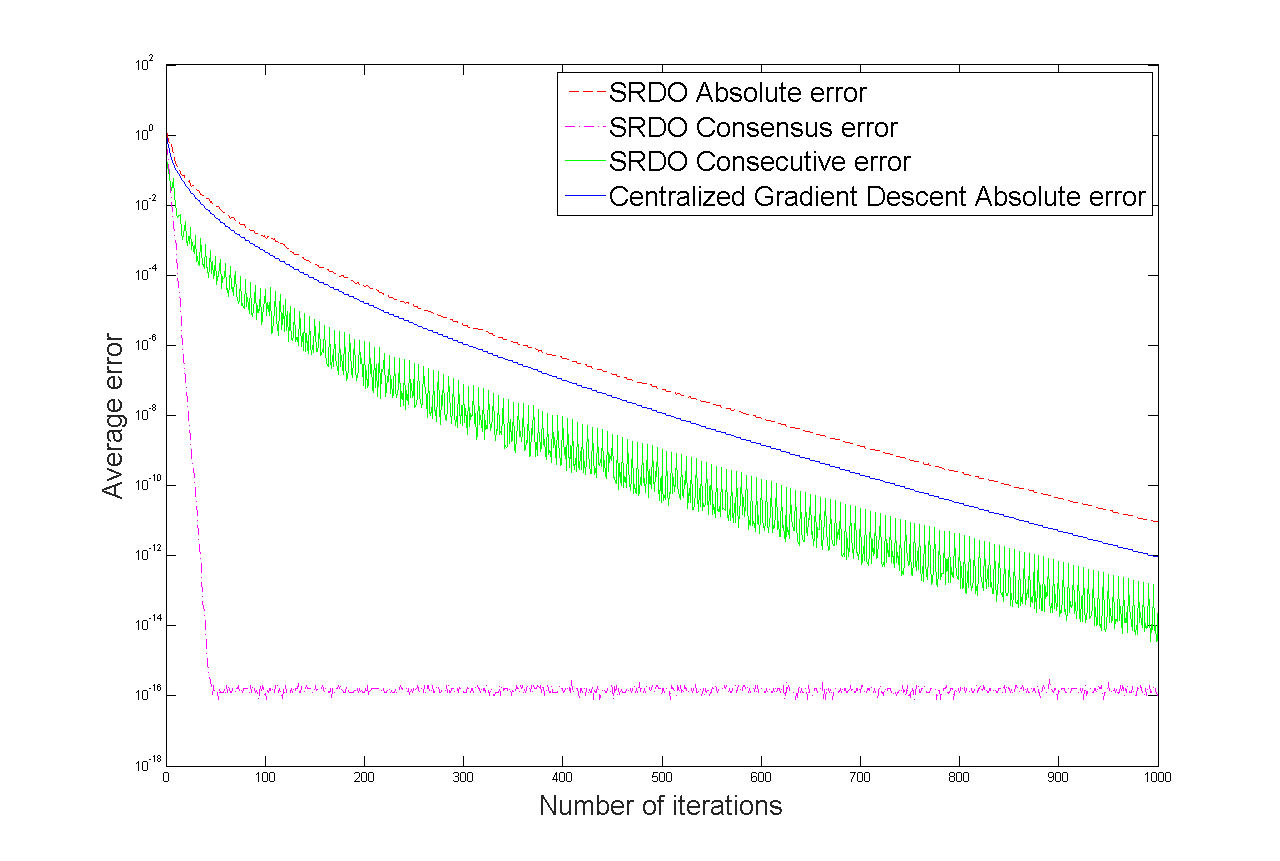}
\caption{Allowed number of stragglers connection for SRDO algorithm network ($ n_{(\iota)}=5 $, $ s_{(\iota)}=2 $) with $ \frac{1}{(k+300)^{0.55}} $ and Centralized-SGD with full connection and no stragglers ($ n_{(\iota)}=5 $, $ s_{(\iota)}=0 $), for $ M= 2500, N =100 $.}
\label{Fig.3}
\end{figure}
A lower bound on the convergence rate performance of SRDO is in scenario~2 which converges slower than Centralized-SGD (with no failures) as shown in Fig.~\ref{Fig.4}. 

%If we had compared it to Centralized-SGD with the same type of failures the behavior would have been dependent on the used coded gradients of the first and the uncoded gradients of the latter and the partial gradients they form on servers relative to the inexact full gradient since for this case (i.e., the gradients are of the same instant evaluations and there are more than the allowed number of stragglers) both algorithms compute a partial inexact gradient. Not to forget that SRDO in this scenario computes a weighted coded gradients linear combination with more gradients than Centralized-SGD with the equivalent type of failures. However, its effect is dependent on how close this weighted combination to the ideal inexact full gradient required for good convergence relative to the closeness of the partial gradient formed by Centralized-SGD with the same type of failures (i.e., the compromise between less partitions' gradients with an a unweighted direct sum form and more partition's coded gradients but in a weighted form). Note again, that both algorithms use gradients of the same instant evaluations.

We apprehend that if $ H > 0 $ then the behavior of SRDO in scenario~2 would be much worse than SRDO scenario~2 with $ H = 0 $ since the partial inexact gradient computed in the first is of delayed evaluations.

Meanwhile, SRDO in scenario~3 would perform better than SRDO in scenario~2 since the stale delayed gradients are added in an attempt to allow the servers to form an overall inexact gradient and thus its performance might match that of Centralized-SGD with no failures depending on the used coding scheme and the adequate stepsize calibration as we are going to discuss later in Subsection~\ref{Subsection-theta-discussion}. 
%And surely, SRDO in scenario~3 will behave better than centralized-SGD with $ H > 0 $ and same type of failures since the latter uses also delayed gradients however it forms a partial inexact gradient per each iteration. 
Although SRDO in Scenario~3 has better convergence rate than Centralized-SGD with the same type of failures depending on the value of the delay uniform bound $ H $, where smaller $ H $ favors a better performance, and this is due to the leverage allowed by the utilized coding scheme.

%However, the performance of SRDO in scenario~3 and Centralized-SGD with the same type of failures depends on the compensation compromise in forming an approximation to the full inexact gradient through utilizing delayed stale coded gradients in the first algorithm relative to uncoded same instant gradients used in forming a partial inexact gradient in the latter one. Notice, it is a compromise between an exact full gradient evaluation but with delayed stale partitions' gradients use and a partial gradient approximation formed by some partitions' gradients of the same instant evaluations that falls well below the number of partitions that allows a full inexact gradient evaluation at a server node. Notice, although Centralized-SGD, with same type of failures, uses gradients of the same instant there cardinality is less than that of the gradients of SRDO in scenario~3 (SI-Stragglers decomposition) formed from same instant evaluations.

The above analysis is clearly seen in Fig.~\ref{Fig.5}, \ref{Fig.6} and \ref{Fig.7} where an increase in $ H $ degrades the convergence rate and it is up to the stepsize calibration, as we are going to show later in Subsection~\ref{Subsection-theta-discussion}, to provide suitable performance accomodation

%\begin{remark}
%We use the term full inexact gradient in any centralized network master or server node update since although the uncoded or coded gradients used in its computation could be of the same instant however the computation would be inexact since they are gradient evaluations of possibly different weighted averages sent to the workers thus inexact. However, full gradient since the overall formed gradient at server nodes is equal to the overall batch gradient needed for good convergence although of different weighted averages evaluations of maybe delayed instants. Moreover, we use the term partial gradient if the batch gradient formed at the servers is not equal to the overall batch gradient needed for good convergence but usually forms part or weighted part of that overall gradient.
%\end{remark}

\vspace{-2cm}
\begin{figure}[H]
%\hspace*{1.5cm}
%\includegraphics[width=10cm,height=7cm,angle=0]
%\includegraphics[trim = 0 0 0 40, clip, width=14cm,height=10cm,angle=0]{300_100_T5_035_300SS.png}
\includegraphics[bb=0 0 800 800,scale=0.3]{}
\caption{More than the allowed number of stragglers connection for SRDO algorithm network ($ n_{(\iota)}=5 $, $ s_{(\iota)}=3 $) where $ H=20 $ for $ \alpha_{k}= \frac{1}{(k+300)^{0.95}} $ using gradient computation scenario $ 2 $ and Centralized-SGD with full connection and no stragglers ($ n_{(\iota)}=5 $, $ s_{(\iota)}=0 $), for $ M= 1500, N =100 $.}
\label{Fig.4}
\end{figure}

\vspace{-3cm}
\begin{figure}[H]
%\hspace*{1.5cm}
%\includegraphics[width=10cm,height=7cm,angle=0]
%\includegraphics[trim = 0 0 0 40, clip, width=14cm,height=10cm,angle=0]{300_100_T5_035_300SS.png}
\includegraphics[bb=0 0 800 800,scale=0.3]{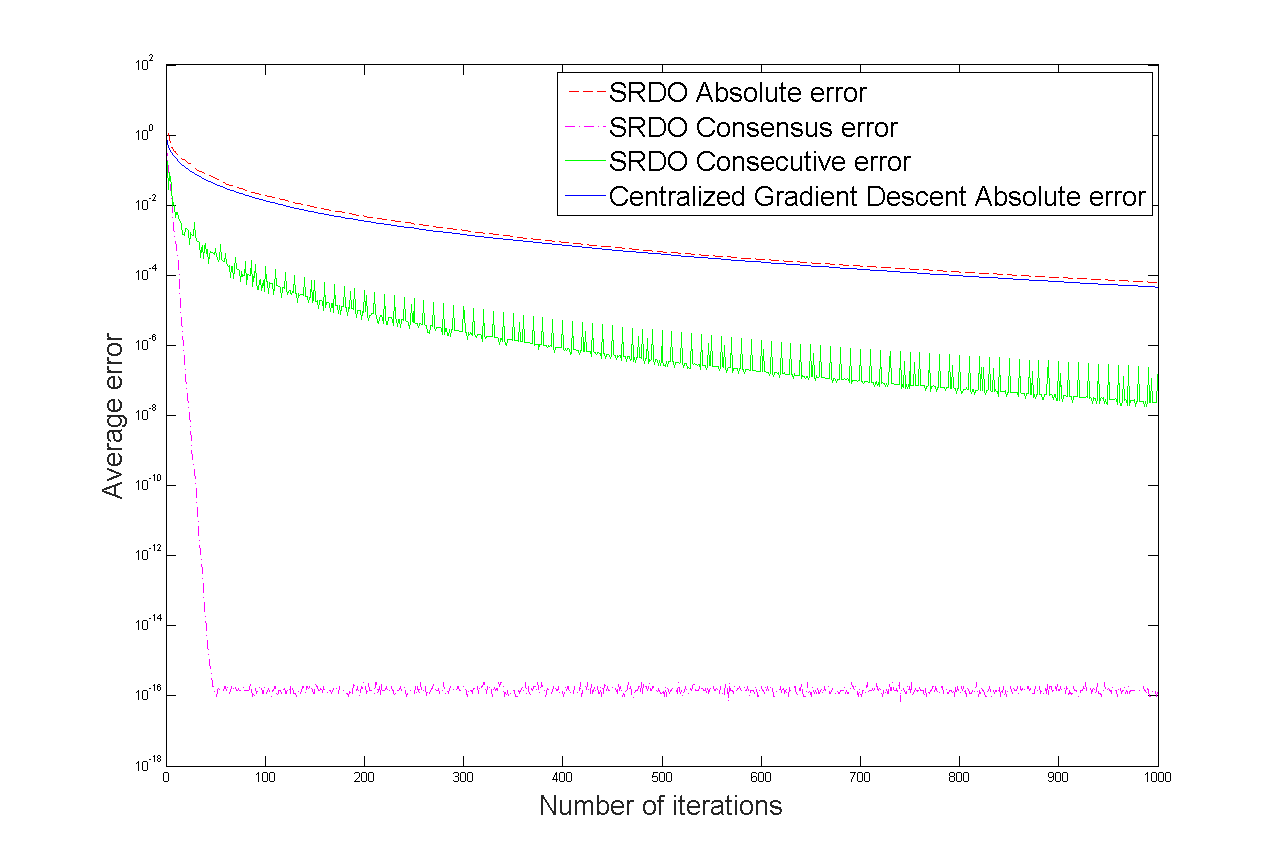}
\caption{More than the allowed number of stragglers connection for SRDO algorithm network ($ n_{(\iota)}=3 $, $ s_{(\iota)}=2 $) where $ H=5 $ for $ \alpha_{k}= \frac{1}{(k+300)^{0.35}} $ using gradient computation scenario $ 3 $ and Centralized-SGD with same type of failures ($ n_{(\iota)}=3 $, $ s_{(\iota)}=2 $), for $ M= 1500, N =100 $.}
\label{Fig.5}
\end{figure}

\vspace{-1cm}
\begin{figure}[H]
%\hspace*{1.5cm}
%\includegraphics[trim = 0 0 0 40, clip, width=14cm,height=10cm,angle=0]
\includegraphics[bb=0 0 800 800,scale=0.3]{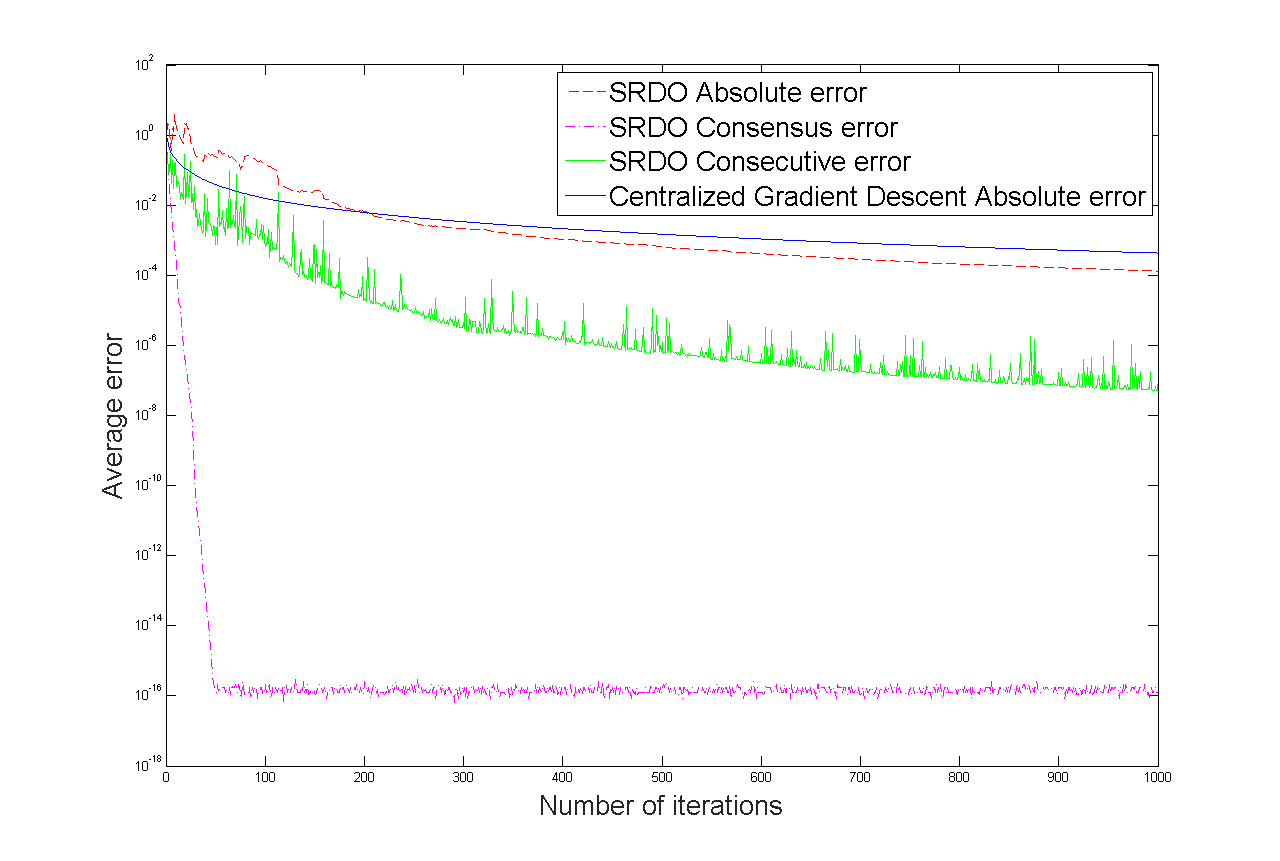}
\caption{More than the allowed number of stragglers connection for SRDO algorithm network ($ n_{(\iota)}=3 $, $ s_{(\iota)}=2 $) where $ H=10 $ for $ \alpha_{k}= \frac{1}{(k+300)^{0.55}} $ using gradient computation scenario $ 3 $  and Centralized-SGD with same type of failures ($ n_{(\iota)}=3 $, $ s_{(\iota)}=2 $), for $ M= 1500, N =100 $.}
\label{Fig.6}
\end{figure}

\vspace{-2cm}
\begin{figure}[H]
%\hspace*{1.5cm}
%\includegraphics[trim = 0 0 0 40, clip, width=14cm,height=10cm,angle=0]
\includegraphics[bb=0 0 800 800,scale=0.3]{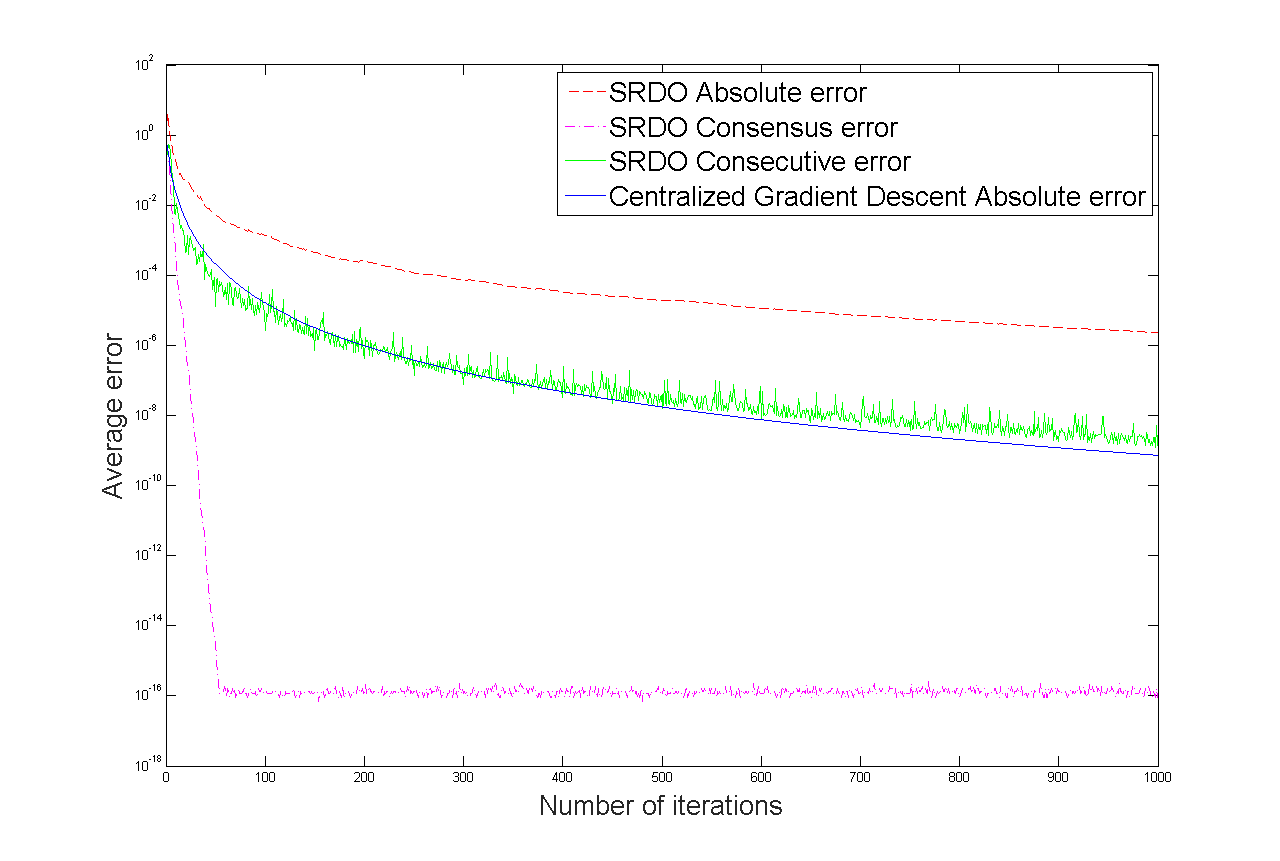}
\caption{More than the allowed number of stragglers connection for SRDO algorithm network ($ n_{(\iota)}=3 $, $ s_{(\iota)}=2 $) where $ H=20 $ for $ \alpha_{k}= \frac{1}{(k+300)^{0.75}} $ using gradient computation scenario $ 3 $  and Centralized-SGD with same type of failures ($ n_{(\iota)}=3 $, $ s_{(\iota)}=2 $), for $ M= 1500, N =100 $.}
\label{Fig.7}
\end{figure}

% as p increases the gap between the two error floor of SGC and SGC–Send–All decreases. In our simulations, we notice the error floor of both algorithms almost match for p ≥ 0.6 as can be seen in Figure 3c. For our chosen data set, 

\subsection{Discussion relative to stepsize and delay uniform bound}\label{Subsection-theta-discussion}

The convergence can be faster or slower depending on the condition number of the coded matrices in relation to the uncoded matrices at each node; i.e., this is related to the respective Lipschitz constants.
%For Figs. \ref{Fig.7}, \ref{Fig.9}, and \ref{Fig.10}, the behavior matches the described convergence rates in Section~\ref{sec-convergence_rate}.
The fluctuation of the average consensus  error for the SRDO has a larger variation, % which is dependent on $ T = \max_{i \in [1:n] }T_{i} $, 
while the proposed algorithm corresponding absolute error behaves more smoothly.
We can conceive from the simulations that SRDO has faster convergence for a smaller exponent $\theta \in (0 , 1] $, which confirms its convergence rate estimate in Section~\ref{sec-convergence_rate}.
However, the simulations also indicate that decreasing more the exponent $\theta$ moves SRDO into the divergence phase, which could be directly related to the complexity of the network.

It is  worth noting that we can adequately calibrate this convergence/divergence trade-off by increasing the value of $ a $ in our illustrated examples for a fixed exponent $ \theta $.
We can see that for a fixed value of $ H $, the delay uniform bound, and a fixed condition number, i.e., fixed Lipschitz constant,(more specifically for a fixed matrix $ {\bf G}_{i} $), a considerable decrease in the exponent $ \theta $ (i.e., the increase in the stepsize) allows the algorithm to enter the divergence instability region. Then an increase in $ \theta $ (i.e., a decrease in the stepsize) will make it converge the fastest where then any other increase in $ \theta $ (i.e., other decrease in the stepsize) will ultimately degrade the algorithm to a slower convergence.
Similarly, if we fix $ H $ and the stepsize, the behavior of the convergence of the algorithm relative to the change in the condition number is the same as that relative to the stepsize in the previous scenario.
Moreover, we see that when $ H $ increases, and the allowed number of stragglers of the same instant connection becomes less frequent, then the convergence under the same stepsize and fixed coding matrices is replaced by an anticipated divergence. Then, for that $ H $, we can reenter the convergence region of the algorithm by increasing the exponent $ \theta $ for a fixed optimization problem.  Convergence is also achieved for problems with matrices of higher condition numbers when the stepsize and $ H $ are fixed.

%As for scenario $ 3 $, we see that its performance is better than that of scenario $ 2 $, because scenario $ 3 $ exploits stale or delayed gradients to form the overall gradient at each instant, although both scenarios $ 2 $ and $ 3 $ have slower convergence rates for the fixed stepsize and fixed condition number relative to scenario $ 1 $.

For scenario~3 of SRDO, we can also see in Fig. \ref{Fig.5} that the value of $ \theta = 0.35 $ allowed a comparable convergence rate of the SRDO for $ H = 5 $ as that of the Centralized-SGD algorithm with same type of failures.
In Figs. \ref{Fig.6} and \ref{Fig.7}, we realize that the lower value of $ \theta=0.35 $ is not permissible, because the SRDO algorithm will considerably enter the instability region, while a higher value of $ \theta=0.55 $ favors a better convergence rate for $ H = 10 $, and the highest value of $ \theta=0.75 $ a better convergence rate for $ H = 20 $. And this better convergence rate is relative to Centralized-SGD with the same type of failures although the overall performance here of the first is much degraded relative to the latter. We could have also acquired a better convergence rate for $ H=10 $ and $ H=20 $ relative to Centralized-SGD with same type of failures or even no failures if we adequately calibrated $ \theta $ (i.e., increase $ \theta $) so that the algorithm is the fast convergence region as described at the beginning of this subsection.
Moreover, as we mentioned earlier, in the simulation we have used a definite coding scheme introduced in \citep{tandon17a} using the encoding algorithm~\ref{alg:algorithm-B} and decoding algorithm~\ref{alg:algorithm-A} described in Subsection~\ref{Gradient Coding}. We could have adapted other coding schemes that can be adjusted to improve the convergence rate.

%The stragglers are chosen randomly and i.i.d. with a probability $ p $ and this is repeated until the algorithm stops. The stragglers are independent in a manner that permits a specific type of stragglers according to scenarios~1, 2 and 3. That is, in a manner that permits an allowed number of stragglers across all iterations or more than the allowed number of stragglers across all iterations, respectively.

%We also check the effect of the dependency between stragglers across iterations on the numerical performance. We enforce dependency of stragglers across iterations by fixing the stragglers for $ T $ iterations, after which the stragglers are chosen randomly again (i.e., the special value of $ T=1 $ implies that the stragglers are chosen randomly according to the above). A large value of $ T $ implies a longer dependency among the stragglers across iterations. We observe as $ T $ increases, the rate of convergence decreases.

\section{Conclusion}

We have considered in this paper a parameter server network algorithm, SRDO, for minimizing a convex function that consists of a number of component functions. The parameter server updates estimates synchronously with the possibility of asynchronous use of computed gradients' evaluations and straggler workers mitigation. Computed gradients can be of a delayed time with uniform bound on that delay and no other statistical assumptions. We restricted the simulation for the case of a quadratic function which corresponds to solving an overdetermined system of linear equations. A convergence proof for this algorithm in its general form (not necessarily a quadratic function) was provided in the case of network topologies where the number of stragglers is under the allowed threshold and when the number of stragglers exceeding the quantity allowed. In Section~\ref{sec-convergence_rate} we describe the convergence rates. Furthermore, the simulation showed optimistic results for the algorithm convergence rate. The metrics matched the centralized gradient descent method metrics with the bonus of robustness to an allowed number of stragglers. Furthermore, we analytically showed that the convergence rate can be considerably enhanced through applying an adequate coding scheme as shown from the dependency of the convergence rate on the coding matrices.

\vspace{1cm}

\section{Appendix}

\subsection{Evaluation of $ \sum_{j=1}^{n} \| {\bf R}_{j,r_{(\iota)}}(k) \| ^{2} $}\label{app:evaluateR}

Then by Cauchy-Schwartz inequality we have \\
$ \| {\bf R}_{j,r_{(\iota)}}(k) \|  \leq \alpha_{k}  \|{\bf A}^{r_{(\iota)}} \|  _{\infty} \|{\bf B}^{r_{(\iota)}}\| _{2,\infty} $ \\ $ \times  \max\limits_{(\iota), \lambda, 0 \leq \Delta{k}(q,w,r_{(\iota)},k) \leq H} \| \nabla{f}^{r_{(\iota)}}_{\lambda}({\bf v}_{q}(k-\Delta{k}(q,w,r_{(\iota)},k)) - \nabla{f}^{r_{(\iota)}}_{\lambda}({\bf v}_{i}(k)) \|  $
and 
\\
$ \| {\bf R}_{j,r_{(\iota)}}(k) \| ^{2} \leq \alpha_{k} ^{2} \|{\bf A}^{r_{(\iota)}} \| ^{2} _{\infty} \|{\bf B}^{r_{(\iota)}}\| _{2,\infty} ^{2} $ \\ $ \times \max\limits_{(\iota), \lambda, 0 \leq \Delta{k}(q,w,r_{(\iota)},k) \leq H} \| \nabla{f}^{r_{(\iota)}}_{\lambda}({\bf v}_{q}(k-\Delta{k}(q,w,r_{(\iota)},k)) - \nabla{f}^{r_{(\iota)}}_{\lambda}({\bf v}_{i}(k)) \| ^{2} $

%%%%%%%%%%%
% where   $ \|{\bf A}^{r_{(\iota)}} \| ^{2}_{2,\infty} $ is the norm of the row of ${\bf A}^{r_{(\iota)}}$ with maximum $ l_2 $ norm 
%(i.e. ${\bf A}^{r_{(\iota)}}_{fit}$ can vary over all possibilities of connection)
%which is bounded by the  Euclidean norm of the vector formed by the support of a row of ${\bf A}^{r_{(\iota)}}$ of length equals $n_{r_{(\iota)}}-s_{r_{(\iota)}}$ of maximum Euclidean norm, since not all coefficients are nonzero.
%%%%%%%%%%%

%And $ {\bf v}_{M}(k) $ and $ {\bf v}_{m}(k) $ are the instants at iteration $ k' $  where $ k-H \leq k' \leq k $ with 
%$  \| \nabla{f}^{r_{(\iota)}}_{\lambda}({\bf v}_{M}(k))-\nabla{f}^{r_{(\iota)}}_{\lambda}({\bf v}_{m}(k)) \| = \max\limits_{i,j} \| \nabla{f}^{r_{(\iota)}}_{\lambda}({\bf v}_{j}(k'))-\nabla{f}^{r_{(\iota)}}_{\lambda}({\bf v}_{i}(k')) \| $. Then

For scenario~1  and 3 of Division~1, we have the contribution relative to each $ w \in \Gamma_{fit, r_{(\iota)}} $ of $ (\nabla{f}^{r_{(\iota)}}_{\lambda}({\bf v}_{q}(k-\Delta{k}(q,w,r_{(\iota)},k)) - \nabla{f}^{r_{(\iota)}}_{\lambda}({\bf v}_{i}(k))) $ satisfying the inequality 
{\begin{align*}
  &  \| \nabla{f}^{r_{(\iota)}}_{\lambda}({\bf v}_{q}(k-\Delta{k}(q,w,r_{(\iota)},k)) - \nabla{f}^{r_{(\iota)}}_{\lambda}({\bf v}_{i}(k)) \|   \leq \\
  & \| \nabla{f}^{r_{(\iota)}}_{\lambda}({\bf v}_{q}(k-\Delta{k}(q,w,r_{(\iota)},k)) - \nabla{f}^{r_{(\iota)}}_{\lambda}(x^{*}) \| 
  + \| \nabla{f}^{r_{(\iota)}}_{\lambda}({\bf v}_{i}(k))-\nabla{f}^{r_{(\iota)}}_{\lambda}(x^{*}) \| \leq \\
& L\| {\bf v}_{q}(k-\Delta{k}(q,w,r_{(\iota)},k) - x^{*} \| 
+ L \| {\bf v}_{i}(k) - x^{*} \|.
\end{align*}}
where we used the Lipschitz assumption   on the gradients for the last inequality. 
Thus, we have
{ \begin{align*}
  & \max\limits_{\lambda, 0 \leq \Delta{k}(q,w,r_{(\iota)},k) \leq H, q \in V} \| \nabla{f}^{r_{(\iota)}}_{\lambda}({\bf v}_{q}(k-\Delta{k}(q,w,r_{(\iota)},k)) - \nabla{f}^{r_{(\iota)}}_{\lambda}({\bf v}_{i}(k)) \|   \leq \\
  &  \max\limits_{\lambda, 0 \leq \Delta{k}(q,w,r_{(\iota)},k) \leq H, q \in V} (\| \nabla{f}^{r_{(\iota)}}_{\lambda}({\bf v}_{q}(k-\Delta{k}(q,w,r_{(\iota)},k)) - \nabla{f}^{r_{(\iota)}}_{\lambda}(x^{*}) \| \\
 & \hspace{2cm} + \| \nabla{f}^{r_{(\iota)}}_{\lambda}({\bf v}_{i}(k))-\nabla{f}^{r_{(\iota)}}_{\lambda}(x^{*}) \|) \leq \\
 & \max\limits_{(\iota), \lambda, 0 \leq \Delta{k}(q,w,r_{(\iota)},k) \leq H, q \in V} (L\| {\bf v}_{q}(k-\Delta{k}(q,w,r_{(\iota)},k) - x^{*} \| 
 + L \| {\bf v}_{i}(k) - x^{*} \|) .
\end{align*}}

But we have

{ \begin{equation}\label{3.64a}
\begin{split}
 & \max\limits_{\lambda, 0 \leq \Delta{k}(q,w,r_{(\iota)},k) \leq H, q} \| {\bf v}_{q}(k-\Delta{k}(q,w,r_{(\iota)},k) - x^{*} \|   \leq
 \max\limits_{k-H \leq \hat{k} \leq k, q} \| {\bf v}_{q}(\hat{k}) -  x^{*} \|
\end{split}
\end{equation}}
where $ q \in \{1,\ldots,q\} $.

{\small \begin{equation}\label{3.64b}
\begin{split}
\max\limits_{\lambda, 0 \leq \Delta{k}(q,w,r_{(\iota)},k) \leq H, q} \| {\bf v}_{i}(k) - x^{*} \|  \leq \max\limits_{k-H \leq \hat{k} \leq k, q} \| {\bf v}_{q}(\hat{k}) -  x^{*}\| 
\end{split}
\end{equation}}

where $ q \in \{1,\ldots,q\} $.

Then we have for scenario~1 and 3 
{ \begin{align*}
  & \max\limits_{\lambda, 0 \leq \Delta{k}(q,w,r_{(\iota)},k) \leq H, q \in V} \| \nabla{f}^{r_{(\iota)}}_{\lambda}({\bf v}_{q}(k-\Delta{k}(q,w,r_{(\iota)},k)) - \nabla{f}^{r_{(\iota)}}_{\lambda}({\bf v}_{i}(k)) \|   \leq \\
  & \hspace{2cm} 2 L \max\limits_{k-H \leq \hat{k} \leq k, q \in V} \| {\bf v}_{q}(\hat{k}) -  x^{*}\|.
\end{align*}}

 Therefore, for scenario~1 and 3 we have
{ \begin{align*}
 \| {\bf R}_{j,r_{(\iota)}}(k) \|   \leq \alpha_{k}   {\bf A}^{r_{(\iota)}} \| _{\infty} \|{\bf B}^{r_{(\iota)}}\| _{2,\infty} ( 2 L \max\limits_{k-H \leq \hat{k} \leq k, q \in V} \| {\bf v}_{q}(\hat{k}) -  x^{*}\| ) 
\end{align*}}
And squaring both sides and using $ 2ab \leq a^{2} + b^{2} $, we have
{ \begin{align*}
 \| {\bf R}_{j,r_{(\iota)}}(k) \| ^{2} & \leq \alpha_{k} ^{2}  {\bf A}^{r_{(\iota)}} \| ^{2} _{\infty} \|{\bf B}^{r_{(\iota)}}\| _{2,\infty} ^{2} 
 ( 4 L^{2}  \max\limits_{k-H \leq \hat{k} \leq k, q \in V} \| {\bf v}_{q}(\hat{k}) -  x^{*}\|^{2} ) 
\end{align*}}

For scenario~2, we have the contribution relative to each $ w \in \Gamma_{fit}, r_{(\iota)} \cap \Gamma_{(\iota),r_{(\iota)}} $ of  $(\nabla{f}^{r_{(\iota)}}_{\lambda}({\bf v}_{q}(k-\Delta{k}(q,w,r_{(\iota)},k)) - \nabla{f}^{r_{(\iota)}}_{\lambda}({\bf v}_{i}(k))) $ satisfying the inequality 
{\begin{align*}
  &  \| \nabla{f}^{r_{(\iota)}}_{\lambda}({\bf v}_{q}(k-\Delta{k}(q,w,r_{(\iota)},k)) - \nabla{f}^{r_{(\iota)}}_{\lambda}({\bf v}_{i}(k)) \|   \leq \\
  & \| \nabla{f}^{r_{(\iota)}}_{\lambda}({\bf v}_{q}(k-\Delta{k}(q,w,r_{(\iota)},k)) - \nabla{f}^{r_{(\iota)}}_{\lambda}(x^{*}) \| 
  +  \| \nabla{f}^{r_{(\iota)}}_{\lambda}({\bf v}_{i}(k))-\nabla{f}^{r_{(\iota)}}_{\lambda}(x^{*}) \| \leq \\
& L\| {\bf v}_{q}(k-\Delta{k}(q,w,r_{(\iota)},k) - x^{*} \| 
+ L \| {\bf v}_{i}(k) - x^{*} \|.
\end{align*}}
And the contribution relative to each $ w \in \Gamma_{fit, r_{(\iota)}} \ \Gamma_{i, r_{(\iota)}} $ \\ 
of  $(\nabla{f}^{r_{(\iota)}}_{\lambda}({\bf v}_{q}(k-\Delta{k}(q,w,r_{(\iota)},k)) - \nabla{f}^{r_{(\iota)}}_{\lambda}({\bf v}_{i}(k))) $ satisfying the inequality 
{\begin{align*}
  &  \| \nabla{f}^{r_{(\iota)}}_{\lambda}({\bf v}_{q}(k-\Delta{k}(q,w,r_{(\iota)},k)) - \nabla{f}^{r_{(\iota)}}_{\lambda}({\bf v}_{i}(k)) \|   = \\
  &  \| \nabla{f}^{r_{(\iota)}}_{\lambda}({\bf v}_{i}(k)) \|   \leq  \| \nabla{f}^{r_{(\iota)}}_{\lambda}({\bf v}_{i}(k)) - \nabla{f}^{r_{(\iota)}}_{\lambda}({\bf x}^{*}) \| 
  +
  \| \nabla{f}^{r_{(\iota)}}_{\lambda}(x^{*}) \| \leq \\
& L\| {\bf v}_{i}(k) - {\bf x}^{*} \| 
+ L \| {\bf x}^{*} - {\bf x}^{\lambda,r_{(\iota)}} \|.
\end{align*}}
where we used the Lipschitz assumption   and $ \nabla{f}^{r_{(\iota)}}_{\lambda}({\bf x}^{\lambda,r_{(\iota)}}) = 0 $ in the last inequality.

Thus, we have for scenario~2 

{\begin{align*}
   & \max\limits_{\lambda, 0 \leq \Delta{k} (q,w,r_{(\iota)},k) \leq H, q}  \| \nabla{f}^{r_{(\iota)}}_{\lambda}({\bf v}_{q}(k-\Delta{k}(q,w,r_{(\iota)},k)) - \nabla{f}^{r_{(\iota)}}_{\lambda}({\bf v}_{i}(k)) \|   \leq \\
 & \max(\max\limits_{\lambda, 0 \leq \Delta{k}(q,w,r_{(\iota)},k) \leq H} (L\| {\bf v}_{q}(k-\Delta{k}(q,w,r_{(\iota)},k) - x^{*} \| \\
& + L \| {\bf v}_{i}(k) - x^{*} \|) ,  \max\limits_{\lambda, 0 \leq \Delta{k}(q,w,r_{(\iota)},k) \leq H} (L\| {\bf v}_{i}(k) - {\bf x}^{*} \| + L \| {\bf x}^{*} - {\bf x}^{\lambda,r_{(\iota)}} \|) 
\end{align*}}

Therefore, for scenario~2 in Division~1, that is, where Condition~4.1 is satisfied we have

{\begin{align*}
   & \max\limits_{\lambda, 0 \leq \Delta{k} (q,w,r_{(\iota)},k) \leq H, q}  \| \nabla{f}^{r_{(\iota)}}_{\lambda}({\bf v}_{q}(k-\Delta{k}(q,w,r_{(\iota)},k)) - \nabla{f}^{r_{(\iota)}}_{\lambda}({\bf v}_{i}(k)) \|   \leq \\
 & \max(\max\limits_{\lambda, 0 \leq \Delta{k}(q,w,r_{(\iota)},k) \leq H} (L\| {\bf v}_{q}(k-\Delta{k}(q,w,r_{(\iota)},k) - x^{*} \| \\
& + L \| {\bf v}_{i}(k) - x^{*} \|) ,  \max\limits_{\lambda, 0 \leq \Delta{k}(q,w,r_{(\iota)},k) \leq H} (L\| {\bf v}_{i}(k) - {\bf x}^{*} \| + L \| {\bf x}^{*} - {\bf x}^{\lambda,r_{(\iota)}} \|) \\
\leq & \max( 2 L \max\limits_{k-H \leq \hat{k} \leq k, q} \| {\bf v}_{q}(\hat{k}) -  x^{*}\| , 2 L \max\limits_{k-H \leq \hat{k} \leq k, q} \| {\bf v}_{q}(\hat{k}) -  x^{*}\| )
\\
\leq &  2 L \max\limits_{k-H \leq \hat{k} \leq k, q \in V} \| {\bf v}_{q}(\hat{k}) -  x^{*}\| .
\end{align*}}

where we used \eqref{3.64a} and \eqref{3.64b} in the second inequality.

Therefore, for scenario~2 in Division~2, that is, where Condition~6 is not satisfied we have

{\begin{align*}
   & \max\limits_{\lambda, 0 \leq \Delta{k} (q,w,r_{(\iota)},k) \leq H, q}  \| \nabla{f}^{r_{(\iota)}}_{\lambda}({\bf v}_{q}(k-\Delta{k}(q,w,r_{(\iota)},k)) - \nabla{f}^{r_{(\iota)}}_{\lambda}({\bf v}_{i}(k)) \|   \leq \\
 & \max(\max\limits_{\lambda, 0 \leq \Delta{k}(q,w,r_{(\iota)},k) \leq H} (L\| {\bf v}_{q}(k-\Delta{k}(q,w,r_{(\iota)},k) - x^{*} \| \\
& + L \| {\bf v}_{i}(k) - x^{*} \|) ,  \max\limits_{\lambda, 0 \leq \Delta{k}(q,w,r_{(\iota)},k) \leq H} (L\| {\bf v}_{i}(k) - {\bf x}^{*} \| + L \| {\bf x}^{*} - {\bf x}^{\lambda,r_{(\iota)}} \|) \\
\leq & \max( 2 L \max\limits_{k-H \leq \hat{k} \leq k, q} \| {\bf v}_{q}(\hat{k}) -  x^{*}\| ,  L \max\limits_{k-H \leq \hat{k} \leq k, q} \| {\bf v}_{q}(\hat{k}) -  x^{*}\| + L \| {\bf x}^{*} - {\bf x}^{\lambda,r_{(\iota)}} \|) 
\\
\leq &  2 L \max\limits_{k-H \leq \hat{k} \leq k, q} \| {\bf v}_{q}(\hat{k}) -  x^{*}\|  + L \max\limits_{\lambda} \| {\bf x}^{*} - {\bf x}^{\lambda,r_{(\iota)}} \| .
\end{align*}}

where we used \eqref{3.64b} in the second inequality.

Then for scenarios included in Division~1 we have
{ \begin{align*}
 \| {\bf R}_{j,r_{(\iota)}}(k) \|   \leq \alpha_{k}  \max\limits_{(\iota)}\| {\bf A}^{r_{(\iota)}} \| _{\infty} \|{\bf B}^{r_{(\iota)}}\| _{2,\infty} ( 2 L \max\limits_{k-H \leq \hat{k} \leq k, q \in V} \| {\bf v}_{q}(\hat{k}) -  x^{*}\| ) 
\end{align*}}

And squaring both sides and using $ 2ab \leq a^{2} + b^{2} $ we have
{ \begin{align*}
 \| {\bf R}_{j,r_{(\iota)}}(k) \| ^{2} & \leq \alpha_{k} ^{2} \max\limits_{(\iota)} \| {\bf A}^{r_{(\iota)}} \| ^{2} _{\infty} \|{\bf B}^{r_{(\iota)}}\| _{2,\infty} ^{2}  
 ( 4 L^{2}  \max\limits_{k-H \leq \hat{k} \leq k, q \in V} \| {\bf v}_{q}(\hat{k}) -  x^{*}\|^{2} ) 
\end{align*}}
For scenarios in Division~2, that is, Scenario~2 where Condition~4.1 is not satisfied, we have

{ \begin{align*}
 \| {\bf R}_{j,r_{(\iota)}}(k) \|  & \leq \alpha_{k}  \max\limits_{(\iota)}\| {\bf A}^{r_{(\iota)}} \| _{\infty} \|{\bf B}^{r_{(\iota)}}\| _{2,\infty} \\
 & \times ( 2 L \max\limits_{k-H \leq \hat{k} \leq k} \| {\bf v}_{q}(\hat{k}) -  x^{*}\| + L \max\limits_{(\iota), \lambda} \| {\bf x}^{*} - {\bf x}^{\lambda,r_{(\iota)}} \|) 
\end{align*}}
And squaring both sides and using $ 2ab \leq a^{2} + b^{2} $, we have
{ \begin{align*}
 \| {\bf R}_{j,r_{(\iota)}}(k) \|^{2}  &\leq \alpha_{k}^{2}  \max\limits_{(\iota)}\| {\bf A}^{r_{(\iota)}} \|^{2} _{\infty} \|{\bf B}^{r_{(\iota)}}\|^{2} _{2,\infty} \\
 & \times( 4 L^{2} \max\limits_{k-H \leq \hat{k} \leq k} \| {\bf v}_{q}(\hat{k}) -  x^{*}\| + 2 L^{2} \max\limits_{(\iota), \lambda} \| {\bf x}^{*} - {\bf x}^{\lambda,r_{(\iota)}} \|^{2}) 
\end{align*}}

And we can get the upper bounds for $ \| {\bf \epsilon}_{j,r_{(\iota)}}(k) \| $ and $ \| {\bf \epsilon}_{j,r_{(\iota)}}(k) \| ^{2} $ for both divisions by substituting $ {\bf \epsilon}_{j,r_{(\iota)}}(k) = \frac{1}{\alpha_{k}} \| {\bf R}_{j,r_{(\iota)}}(k) \| $, respectively.
For all scenarios irrespective of which part they belong, we can upper bound by the bound which is a maximum for both parts, that is the bound of Division~2, so we have
{ \begin{align*}
 \| {\bf R}_{j,r_{(\iota)}}(k) \|  & \leq \alpha_{k}  \max\limits_{(\iota)}\| {\bf A}^{r_{(\iota)}} \| _{\infty} \|{\bf B}^{r_{(\iota)}}\| _{2,\infty} \\
 & \times ( 2 L \max\limits_{k-H \leq \hat{k} \leq k} \| {\bf v}_{q}(\hat{k}) -  x^{*}\| + L \max\limits_{(\iota), \lambda} \| {\bf x}^{*} - {\bf x}^{\lambda,r_{(\iota)}} \|) 
\end{align*}}
And squaring both sides and using $ 2ab \leq a^{2} + b^{2} $, we have
{ \begin{align*}
 \| {\bf R}_{j,r_{(\iota)}}(k) \|^{2}  & \leq \alpha_{k}^{2}  \max\limits_{(\iota)}\| {\bf A}^{r_{(\iota)}} \|^{2} _{\infty} \|{\bf B}^{r_{(\iota)}}\|^{2} _{2,\infty} \\
 & \hspace{-0.5cm} \times ( 4 L^{2} \max\limits_{k-H \leq \hat{k} \leq k} \| {\bf v}_{q}(\hat{k}) -  x^{*}\| + 2 L^{2} \max\limits_{(\iota), \lambda} \| {\bf x}^{*} - {\bf x}^{\lambda,r_{(\iota)}} \|^{2}) 
\end{align*}}
%

%and consequently

%
%{ \begin{align*}
% \sum_{j=1}^{n}  \| {\bf R}_{j,r_{(\iota)}}(k-1) \| ^{2}  \leq \alpha_{k-1} ^{2} \max\limits_{(\iota)} \|{\bf A}^{r_{(\iota)}} \| ^{2} _{\infty} \|{\bf B}^{r_{(\iota)}}\| _{2,\infty} ^{2}( 4 L^{2} \sum_{i=1}^{n} \max\limits_{k-1-H \leq \hat{k} \leq k-1 ; q \in V} \| {\bf v}_{q}(\hat{k}) -  x^{*}\|^{2}) 
%\end{align*}}
%

And then
{ \begin{equation}\label{3.67a}
\begin{split}
\sum_{j=1}^{n} & \| {\bf \epsilon}_{j,r_{(\iota)}} (k) \|  \leq  \max\limits_{(\iota)} \|{\bf A}^{r_{(\iota)}} \| _{\infty} \|{\bf B}^{r_{(\iota)}}\| _{2,\infty} \\
& \times ( 2 L \max\limits_{k-H \leq \hat{k} \leq k} \sum_{j=1}^{n}\| {\bf v}_{q}(\hat{k}) -  x^{*}\| + n L \max\limits_{(\iota), \lambda} \| {\bf x}^{*} - {\bf x}^{\lambda,r_{(\iota)}} \| ) \end{split}
\end{equation}}
And squaring both sides and using $ 2ab \leq a^{2} + b^{2} $, we have
{ \begin{equation}\label{3.67a}
\begin{split}
\sum_{j=1}^{n} & \| {\bf \epsilon}_{j,r_{(\iota)}} (k) \|^{2}  \leq  \max\limits_{(\iota)} \|{\bf A}^{r_{(\iota)}} \|^{2} _{\infty} \|{\bf B}^{r_{(\iota)}}\|^{2} _{2,\infty}  \\
& \times ( 4 L^{2}\max\limits_{k-H \leq \hat{k} \leq k} \sum_{j=1}^{n}\| {\bf v}_{q}(\hat{k}) -  x^{*}\| + 2 n L^{2} \max\limits_{(\iota), \lambda} \| {\bf x}^{*} - {\bf x}^{\lambda,r_{(\iota)}} \| ) \end{split}
\end{equation}}

\subsection{Proof of Lemma~1}

\begin{align*}
     \| & {\bf v}_{l}(k+1) -  {\bf x}^{*}\|^{2} 
      \leq \sum_{j=1}^{n} [{\bf W}(k+1)]_{l,j} \| {\bf x}_{j}(k+1)- {\bf x}^{*}\|^{2} \\
     & \leq \sum_{j=1}^{n}{[\bf W}(k+1)]_{l,j} \| \sum_{(\iota)=1}^{p} \gamma_{(\iota)} [{\bf v}_{j}(k) -\alpha_{k}\widehat{\nabla{f}}^{(\iota)}({\bf v}_{j}(k))]+\gamma_{(0)} {\bf v}_{j}(k) - {\bf x}^{*} \|^{2} \\
     & \leq \sum_{j=1}^{n}[{\bf W}(k+1)]_{l,j} \| {\bf v}_{j}(k) -\alpha_{k} \sum_{(\iota)=1}^{p}\widehat{\nabla{f}}^{(\iota)}({\bf v}_{j}(k))- {\bf x}^{*} \|^{2} \\
     & \leq \sum_{j=1}^{n}[{\bf W}(k+1)]_{l,j} [\| {\bf v}_{j}(k) - {\bf x}^{*} \|^{2} - 2 \alpha_{k}\sum_{(\iota)=1}^{p} \gamma_{(\iota)}\langle \widehat{\nabla{f}}^{(\iota)}({\bf v}_{j}(k)), {\bf v}_{j}(k) - {\bf x}^{*} \rangle \\
     & \hspace{2cm} + \alpha_{k}^{2} \|\sum_{(\iota)=1}^{p}\gamma_{(\iota)}\widehat{\nabla{f}}^{(\iota)}({\bf v}_{j}(k))\|^{2}] \\
\end{align*}
     
\begin{align*}
     & \leq \sum_{j=1}^{n}[{\bf W}(k+1)]_{l,j} [\|  {\bf v}_{j}(k) - {\bf x}^{*} \|^{2} + 2 \alpha_{k} \sum_{(\iota)=1}^{p} \gamma_{(\iota)} \langle \nabla{f}^{(\iota)}({\bf v}_{j}(k)), {\bf x}^{*} - {\bf v}_{j}(k) \rangle \\
     & \hspace{2cm} + 2 \alpha_{k} \sum_{(\iota)=1}^{p} \gamma_{(\iota)}\langle {\bf \epsilon}_{j,r_{(\iota)}}(k), {\bf v}_{j}(k) - {\bf x}^{*} \rangle \\
     & \hspace{2cm} + \alpha_{k}^{2} \|\sum_{(\iota)=1}^{p}\gamma_{(\iota)}(\nabla{f}^{(\iota)}({\bf v}_{j}(k)) - {\bf \epsilon}_{j,r_{(\iota)}}(k))\|^{2}] \\
     & \leq \sum_{j=1}^{n}[{\bf W}(k+1)]_{l,j} [ \|  {\bf v}_{j}(k) - {\bf x}^{*} \|^{2} + 2 \alpha_{k} \sum_{(\iota)=1}^{p} \gamma_{(\iota)} \langle \nabla{f}^{(\iota)}({\bf v}_{j}(k)), {\bf x}^{*} - {\bf v}_{j}(k) \rangle \\ 
     & \hspace{2cm} + 2 \alpha_{k} \sum_{(\iota)=1}^{p} \gamma_{(\iota)}\langle {\bf \epsilon}_{j,r_{(\iota)}}(k), {\bf v}_{j}(k) - {\bf x}^{*} \rangle \\
     & \hspace{2cm} + \alpha_{k}^{2} ( 1- \gamma_{(0)} ) \sum_{(\iota)=1}^{p}\gamma_{(\iota)} \|\nabla{f}^{(\iota)}({\bf v}_{j}(k)) - {\bf \epsilon}_{j,r_{(\iota)}}(k)\|^{2}
\end{align*}

where the first inequality follows from Jensen's inequality and convexity of $ \| x - b  \|^{2} $ and where we used Jensen's inequality for the last inequality.
Then summing from $i=1$ to $n$ and knowing that the sum of each column is less than or equal to $ 1 -\mu $ then the lemma follows.

\subsection{ Proof of Lemma~2}

Having Lemma~1 then we have
 \begin{align*}
     \sum_{l=1}^{n} & \|  {\bf v}_{l}(k+1) - {\bf x}^{*}\|^{2}  \leq (1-\mu) \sum_{j=1}^{n}\|{\bf v}_{j}(k)- {\bf x}^{*}\|^{2} 
     \\ & + 2 \alpha_{k}\sum_{j=1}^{n}\sum_{(\iota)=1}^{p}\gamma_{(\iota)} \langle {\bf \epsilon}_{j,r_{(\iota)}}(k), {\bf v}_{j} - {\bf x}^{*} \rangle + 2 (1-\gamma_{(0)}) \alpha_{k}^{2}  \sum_{j=1}^{n}\sum_{(\iota)=1}^{p}\gamma_{(\iota)} \|{\bf \epsilon}_{j,r_{(\iota)}}(k)\|^{2} \\
   & + 2 \alpha_{k} \sum_{j=1}^{n}\sum_{(\iota)=1}^{p}\gamma_{(\iota)} \langle \nabla{f}^{(\iota)}({\bf v}_{j}(k)), {\bf x}^{*} - {\bf v}_{j}(k) \rangle \\
  & \hspace{2cm} + 2 (1-\gamma_{(0)}) \alpha_{k}^{2} \sum_{j=1}^{n}\sum_{(\iota)=1}^{p}\gamma_{(\iota)} \| \nabla{f}^{(\iota)}({\bf v}_{j}(k)) \| ^{2}
\end{align*}

But
{ \begin{equation}\label{des_exp}
\begin{split}
\langle \nabla{f}^{(\iota)}({\bf v}_{j}(k)), {\bf x}^{*} - & {\bf v}_{j}(k) \rangle   = - \langle \nabla{f}^{(\iota)}({\bf v}_{j}(k)), {\bf v}_{j}(k) -{\bf x}^{*} \rangle \\
& = - \langle \nabla{f}^{(\iota)}({\bf v}_{j}(k)) - \nabla{f}^{(\iota)}({\bf x}^{*}), {\bf v}_{j}(k) -{\bf x}^{*} \rangle 
\end{split}
\end{equation}}
Notice that in the second equality we used $ {\bf x}^{*} $ to be a minimizer of $ f^{(\iota)} $ that is $ \nabla{f}^{(\iota)}({\bf x}^{*})=0 $ since $ f^{(\iota)}({\bf x}^{*})=f^{(\iota)}({\bf x}^{(\iota)}) $ for all $(\iota)$.
While 
{ \begin{equation}
\begin{split}
 \nabla{f}^{(\iota)}({\bf v}_{j}(k)) - \nabla{f}^{(\iota)}({\bf x}^{*}) = a_{v_{j}(k), x^{*}} \| {\bf v}_{j}(k) - {\bf x}^{*} \| \overrightarrow{u}
\end{split}
\end{equation}}
where $ \| \overrightarrow{u} \| = 1 $ and $  0 \leq a_{v_{j}(k), x^{*}} \leq L $,
and 
{ \begin{equation}
\begin{split}
& {\bf v}_{j}(k) -{\bf x}^{*} = \| {\bf v}_{j}(k) -{\bf x}^{*} \| \overrightarrow{v}
\end{split}
\end{equation}}
where $ \| \overrightarrow{v} \| = 1 $.
Using what preceded we have the expression in \eqref{des_exp} equal to
 \begin{align*}
\langle \nabla{f}^{(\iota)}({\bf v}_{j}(k)), {\bf x}^{*} - & {\bf v}_{j}(k) \rangle   = - \langle \nabla{f}^{(\iota)}({\bf v}_{j}(k)) - \nabla{f}^{(\iota)}({\bf x}^{*}), {\bf v}_{j}(k) -{\bf x}^{*} \rangle \\
 & = -a_{v_{j}(k), x^{*}} \| {\bf v}_{j}(k) - {\bf x}^{*} \| ^{2}  \langle \overrightarrow{u},\overrightarrow{v} \rangle
\end{align*}
But since $  \langle \overrightarrow{u},\overrightarrow{v} \rangle \geq 0 $ due to the monotonicity of the gradient we have $  0 \leq  b_{v_{j}(k), x^{*}} = \langle \overrightarrow{u},\overrightarrow{v} \rangle \leq  1 $.
Then
 \begin{equation}\label{res1}
\begin{split}
 \langle \nabla{f}^{(\iota)}({\bf v}_{j}(k)), {\bf x}^{*} - {\bf v}_{j}(k) \rangle   = -a_{v_{j}(k), x^{*}} \| {\bf v}_{j}(k) - {\bf x}^{*} \| ^{2}  b_{v_{j}(k), x^{*}}
\end{split}
\end{equation}
where $ 0 \leq b_{v_{j}(k), x^{*}} \leq 1 $.

%But the last term is negative and its coefficient is decreasing. Thus, for $k>k_{1}$ which is dependent on the Lipcshitz constant of the function we have the last term less than $ -\mu \sum_{j=1}^{n}\|{\bf v}_{j}(k)-{\bf x}^{*}\|^{2} $.

Similarly, 
 \begin{equation}\label{res2}
\begin{split}
\| \nabla{f}^{(\iota)}  ({\bf v}_{j}(k)) \|^{2} & =  \langle \nabla{f}^{(\iota)}({\bf v}_{j}(k)) - \nabla{f}^{(\iota)}({\bf x}^{*}),\nabla{f}^{(\iota)}({\bf v}_{j}(k)) - \nabla{f}^{(\iota)}({\bf x}^{*}) \rangle \\
& = a_{v_{j}(k), x^{*}}^{2} \| {\bf v}_{j}(k) -{\bf x}^{*} \| ^{2} \langle \overrightarrow{u},\overrightarrow{u} \rangle  
= a_{v_{j}(k), x ^{*}}^{2} \| {\bf v}_{j}(k) -{\bf x}^{*} \| ^{2}
\end{split}
\end{equation}
Then substituting \eqref{res1} and \eqref{res2} in \eqref{type1_exp} we have
 \begin{equation}\label{type1_exp}
\begin{split}
     \sum_{l=1}^{n}\|  {\bf v}_{l}(k+1) & - {\bf x}^{*}\|^{2}  \leq  (1-\mu) \sum_{j=1}^{n}\|{\bf v}_{j}(k)- {\bf x}^{*}\|^{2} 
  \\ &  + 2 \alpha_{k}\sum_{j=1}^{n}\sum_{(\iota)=1}^{p}\gamma_{(\iota)} \langle {\bf \epsilon}_{j,r_{(\iota)}}(k), {\bf v}_{j} - {\bf x}^{*} \rangle \\
    & + 2 (1-\gamma_{(0)}) \alpha_{k}^{2} \sum_{j=1}^{n}\sum_{(\iota)=1}^{p}\gamma_{(\iota)}\|{\bf \epsilon}_{j,r_{(\iota)}}(k)\|^{2} \\
    & - 2 \alpha_{k} \sum_{j=1}^{n}\sum_{(\iota)=1}^{p}\gamma_{(\iota)} a_{v_{j}(k), x^{*}} (b_{v_{j}(k), x^{*}} \\
    & \hspace{2cm} - (1-\gamma_{(0)}) \alpha_{k} a_{v_{j}(k), x^{*}} ) \| {\bf v}_{j}(k) - {\bf x}^{*} \| ^{2}   
\end{split}
\end{equation}

\section{Proof of Lemma~3}

From Lemma~1, we have
 \begin{equation}
\begin{split}
     \sum_{l=1}^{n}\| & {\bf v}_{l}(k+1) -  {\bf x}^{*}\|^{2} \leq (1-\mu) \sum_{j=1}^{n}\|{\bf v}_{j}(k)- {\bf x}^{*}\|^{2}  
 \\
 &   + 2 \alpha_{k}\sum_{j=1}^{n}\sum_{(\iota)=1}^{p}\gamma_{(\iota)} \langle {\bf \epsilon}_{j,r_{(\iota)}}(k), {\bf v}_{j} - {\bf x}^{*} \rangle \\
   & + 2 (1-\gamma_{(0)}) \alpha_{k}^{2} \sum_{j=1}^{n}\sum_{(\iota)=1}^{p}\gamma_{(\iota)} \|{\bf \epsilon}_{j,r_{(\iota)}}(k)\|^{2} \\
 &  + 2 \alpha_{k} \sum_{j=1}^{n}\sum_{(\iota)\in I }\gamma_{(\iota)} \langle \nabla{f}^{(\iota)}({\bf v}_{j}(k)), {\bf x}^{*} - {\bf v}_{j}(k) \rangle \\ 
   & + 2 \alpha_{k} \sum_{j=1}^{n}\sum_{(\iota)\in I^{\complement} }\gamma_{(\iota)} \langle \nabla{f}^{(\iota)}({\bf v}_{j}(k)), {\bf x}^{*} - {\bf v}_{j}(k) \rangle \\
   & + 2 (1-\gamma_{(0)}) \alpha_{k}^{2} \sum_{j=1}^{n}\sum_{(\iota)=1}^{p}\gamma_{(\iota)} \| \nabla{f}^{(\iota)}({\bf v}_{j}(k)) \| ^{2}
\end{split}
\end{equation}

But for $(\iota) \in I$ we have $ f^{(\iota)}({\bf x}^{(\iota)}) < f^{(\iota)}({\bf x}^{*})$ and for $(\iota) \in I^{\complement}$ we have $ f^{(\iota)}({\bf x}^{*})= f^{(\iota)}({\bf x}^{(\iota)})$, then the above inequality becomes
 \begin{align*}
     & \sum_{l=1}^{n}\| {\bf v}_{l}(k+1)  - {\bf x}^{*}\|^{2}   \leq 
     (1-\mu) \sum_{j=1}^{n}\|{\bf v}_{j}(k)- {\bf x}^{*}\|^{2} \\ & + 2 \alpha_{k}\sum_{j=1}^{n}\sum_{(\iota)=1}^{p}\gamma_{(\iota)} \langle {\bf \epsilon}_{j,r_{(\iota)}}(k), {\bf v}_{j} - {\bf x}^{*} \rangle \\
     & + 2 (1-\gamma_{(0)}) \alpha_{k}^{2} \sum_{j=1}^{n}\sum_{(\iota)=1}^{p}\gamma_{(\iota)}\|{\bf \epsilon}_{j,r_{(\iota)}}(k)\|^{2} \\
   &  + 2 \alpha_{k} \sum_{j=1}^{n}\sum_{(\iota)\in I }\gamma_{(\iota)} \langle \nabla{f}^{(\iota)}({\bf v}_{j}(k)), {\bf x}^{*} - {\bf x}^{(\iota)} - {\bf x}^{(\iota)} - {\bf v}_{j}(k) \rangle \\
     & + 2 \alpha_{k} \sum_{j=1}^{n}\sum_{(\iota)\in I^{\complement} }\gamma_{(\iota)} \langle \nabla{f}^{(\iota)}({\bf v}_{j}(k)), {\bf x}^{(\iota)} - {\bf v}_{j}(k) \rangle \\ 
   &  + 2 (1-\gamma_{(0)}) \alpha_{k}^{2} \sum_{j=1}^{n}\sum_{(\iota)=1}^{p}\gamma_{(\iota)} \| \nabla{f}^{(\iota)}({\bf v}_{j}(k)) \| ^{2}
\end{align*}
Notice that in the fifth term of RHS we used the strong convexity of $ f^{(\iota)}(x) $ for $ (\iota) \in I^{\complement} $. That is, since $ f^{(\iota)}(x^{*})=f^{(\iota)}(x^{(\iota)}) $ for $ (\iota) \in I^{\complement} $ and $ f^{(\iota)}(x) $ strongly convex for $ (\iota) \in I ^{\complement} $ we have a unique minimizer and therefore $ x^{*}=x^{(\iota)} $ on $ I ^{\complement} $.

Then the above becomes
 \begin{equation}\label{type2_error_bound_no_subst}
\begin{split}
    &  \sum_{l=1}^{n}\|  {\bf v}_{l}(k+1) - {\bf x}^{*}\|^{2}  \leq (1-\mu) \sum_{j=1}^{n}\|{\bf v}_{j}(k)- {\bf x}^{*}\|^{2} 
 \\ &   + 2 \alpha_{k}\sum_{j=1}^{n}\sum_{(\iota)=1}^{p}\gamma_{(\iota)} \langle {\bf \epsilon}_{j,r_{(\iota)}}(k), {\bf v}_{j} - {\bf x}^{*} \rangle \\
    & + 2 (1-\gamma_{(0)}) \alpha_{k}^{2} \sum_{j=1}^{n}\sum_{(\iota)=1}^{p}\gamma_{(\iota)} \|{\bf \epsilon}_{j,r_{(\iota)}}(k)\|^{2} \\
   & + 2 \alpha_{k} \sum_{j=1}^{n}\sum_{(\iota)\in I }\gamma_{(\iota)} \langle \nabla{f}^{(\iota)}({\bf v}_{j}(k)), {\bf x}^{*} - {\bf x}^{(\iota)} \rangle \\ 
    & + 2 \alpha_{k} \sum_{j=1}^{n}\sum_{(\iota)=1}^{p}\gamma_{(\iota)} \langle \nabla{f}^{(\iota)}({\bf v}_{j}(k)), {\bf x}^{(\iota)} - {\bf v}_{j}(k) \rangle  \\
    & + 2 (1-\gamma_{(0)}) \alpha_{k}^{2} \sum_{j=1}^{n}\sum_{(\iota)=1}^{p}\gamma_{(\iota)} \| \nabla{f}^{(\iota)}({\bf v}_{j}(k)) \| ^{2}
\end{split}
\end{equation}

Meanwhile, $  \langle \nabla{f}^{(\iota)}({\bf v}_{j}(k)), {\bf x}^{*} - {\bf x}^{(\iota)} \rangle \leq  \langle \nabla{f}^{(\iota)}({\bf v}_{j}(k)), {\bf x}^{*} - {\bf v}_{j}(k) \rangle  \\ +  \langle \nabla{f}^{(\iota)}({\bf v}_{j}(k)), {\bf v}_{j}(k) - {\bf x}^{(\iota)} \rangle $. 
And 
\begin{equation}\label{ineq_subs2}
\begin{split}
 \langle \nabla{f}^{(\iota)}({\bf v}_{j}(k)), {\bf x}^{*} - {\bf v}_{j}(k) \rangle & \leq f^{(\iota)}({\bf x}^{*}) - f^{(\iota)}({\bf v}_{j}(k)) \\
 & \leq  f^{(\iota)}({\bf x}^{*}) - f^{(\iota)}({\bf x}^{(\iota)})
\end{split}
\end{equation}
since $ {\bf x}^{(\iota)} $ is the minimizer of $ f^{(\iota)}(x) $.
But $ f^{(\iota)}({\bf x}^{*}) - f^{(\iota)}({\bf x}^{(\iota)}) \leq \langle \nabla{f}^{(\iota)}({\bf x}^{*}) - \nabla{f}^{(\iota)}({\bf x}^{(\iota)}) , {\bf x}^{*} - {\bf x}^{(\iota)} \rangle $ where we used $ \nabla{f}^{(\iota)}({\bf x}^{(\iota)})= 0 $ in the inequality.

Let $ {\bf x}^{*} - {\bf x}^{(\iota)} = \| {\bf x}^{*} - {\bf x}^{(\iota)} \| \overrightarrow{v}^{'} $ where $ \| \overrightarrow{v}^{'} \| = 1 $.
Using what preceded we have the expression in \eqref{ineq_subs2} less than or equal to
 \begin{equation}\label{ineq_subs3}
\begin{split}
\langle \nabla{f}^{(\iota)}({\bf v}_{j}(k)), {\bf x}^{*} - {\bf v}_{j}(k) \rangle & \leq f^{(\iota)}({\bf x}^{*}) - f^{(\iota)}({\bf v}_{j}(k))  \\
& \leq  f^{(\iota)}({\bf x}^{*}) - f^{(\iota)}({\bf x}^{(\iota)}) \\
& \leq \langle \nabla{f}^{(\iota)}({\bf x}^{*}) - \nabla{f}^{(\iota)}({\bf x}^{(\iota)}), {\bf x}^{*} -{\bf x}^{(\iota)} \rangle \\
& = a_{x^{*},(\iota)} \| {\bf x}^{*} - {\bf x}^{(\iota)} \| ^{2}  \langle \overrightarrow{u}^{'},\overrightarrow{v}^{'} \rangle
\end{split}
\end{equation}
But since $  \langle \overrightarrow{u}^{'},\overrightarrow{v}^{'} \rangle \geq 0 $ due to the monotonicity of the gradient we have $  0 \leq \langle \overrightarrow{u}^{'},\overrightarrow{v}^{'} \rangle \leq  1 $.
 While 
{ \begin{equation}
\begin{split}
 \nabla{f}^{(\iota)}({\bf x}^{*}) - \nabla{f}^{(\iota)}({\bf x}^{(\iota)}) = a_{x^{*},(\iota)} \| {\bf x}^{*} - {\bf x}^{(\iota)} \| \overrightarrow{u}^{'}
\end{split}
\end{equation}}
where $ \| \overrightarrow{u}^{'} \| = 1 $ and $  0 \leq a_{x^{*},(\iota)} \leq L $.
Then
\begin{align*}
 f^{(\iota)}({\bf x}^{*}) - f^{(\iota)}({\bf x}^{(\iota)}) & \leq \langle \nabla{f}^{(\iota)}({\bf x}^{*}) - \nabla{f}^{(\iota)}({\bf x}^{(\iota)}) , {\bf x}^{*} - {\bf x}^{(\iota)} \rangle \\
 & = a_{x^{*},(\iota)} \| {\bf x}^{*} - {\bf x}^{(\iota)} \| ^{2}  b_{x^{*},(\iota)}
\end{align*}
where $ 0 \leq b_{x^{*},(\iota)} \leq 1 $.

Similarly, we also have
{ \begin{equation}\label{res5}
\begin{split}
  \langle \nabla{f}^{(\iota)}({\bf v}_{j}(k)) - \nabla{f}^{(\iota)}({\bf x}^{(\iota)}), & {\bf v}_{j}(k) -{\bf x}^{(\iota)} \rangle \\
  & = a_{v_{j}(k),(\iota)} \| {\bf v}_{j}(k) - {\bf x}^{(\iota)} \| ^{2}  b_{v_{j}(k),(\iota)}
\end{split}
\end{equation}}
From what have preceded we have $ \langle \nabla{f}^{(\iota)}({\bf v}_{j}(k)), {\bf x}^{*} - {\bf x}^{(\iota)} \rangle \leq a_{x^{*},(\iota)} b_{x^{*},(\iota)} \| {\bf x}^{*} - {\bf x}^{(\iota)} \| ^{2} + a_{v_{j}(k),(\iota)} b_{v_{j}(k),(\iota)} \| {\bf v}_{j}(k) - {\bf x}^{(\iota)} \| ^{2} $. Using this upper bound on the fourth term of RHS of \eqref{type2_error_bound_no_subst} then \eqref{type2_error_bound_no_subst} becomes     
 \begin{align*}
     & \sum_{l=1}^{n}\|  {\bf v}_{l}(k+1) - {\bf x}^{*}\|^{2}  \leq  (1-\mu) \sum_{j=1}^{n}\|{\bf v}_{j}(k)- {\bf x}^{*}\|^{2} 
\\ &    + 2 \alpha_{k}\sum_{j=1}^{n}\sum_{(\iota)=1}^{p}\gamma_{(\iota)} \langle {\bf \epsilon}_{j,r_{(\iota)}}(k), {\bf v}_{j} - {\bf x}^{*} \rangle \\
    & + 2 (1-\gamma_{(0)}) \alpha_{k}^{2} \sum_{j=1}^{n}\sum_{(\iota)=1}^{p}\gamma_{(\iota)} \|{\bf \epsilon}_{j,r_{(\iota)}}(k)\|^{2} \\
  &  + 2 \alpha_{k} \sum_{j=1}^{n}\sum_{(\iota)\in I }\gamma_{(\iota)} a_{x^{*},(\iota)} b_{x^{*},(\iota)} \| {\bf x}^{*} - {\bf x}^{(\iota)} \| ^{2} \\
    & + 2 \alpha_{k} \sum_{j=1}^{n}\sum_{(\iota)\in I }\gamma_{(\iota)} a_{v_{j}(k),(\iota)} b_{v_{j}(k),(\iota)} \| {\bf v}_{j}(k) - {\bf x}^{(\iota)} \| ^{2} \\ 
  &  + 2 \alpha_{k} \sum_{j=1}^{n}\sum_{(\iota)=1}^{p}\gamma_{(\iota)} \langle \nabla{f}^{(\iota)}({\bf v}_{j}(k)), {\bf x}^{(\iota)} - {\bf v}_{j}(k) \rangle \\  
    & + 2 (1-\gamma_{(0)}) \alpha_{k}^{2}  \sum_{j=1}^{n}\sum_{(\iota)=1}^{p}\gamma_{(\iota)} \| \nabla{f}^{(\iota)}({\bf v}_{j}(k)) \| ^{2}
\end{align*}

But
{ \begin{equation}\label{des_expaaa}
\begin{split}
\langle \nabla{f}^{(\iota)}({\bf v}_{j}(k)), {\bf x}^{(\iota)} - & {\bf v}_{j}(k) \rangle   = - \langle \nabla{f}^{(\iota)}({\bf v}_{j}(k)), {\bf v}_{j}(k) -{\bf x}^{(\iota)} \rangle \\
& = - \langle \nabla{f}^{(\iota)}({\bf v}_{j}(k)) - \nabla{f}^{(\iota)}({\bf x}^{(\iota)}), {\bf v}_{j}(k) -{\bf x}^{(\iota)} \rangle 
\end{split}
\end{equation}}
Notice that in the second equality we used $ \nabla{f}^{(\iota)}({\bf x}^{(\iota)})=0 $ since $ f^{(\iota)}({\bf x}^{*})=f^{(\iota)}({\bf x}^{(\iota)}) $ for all $(\iota)$.
While 
{ \begin{equation}
\begin{split}
 \nabla{f}^{(\iota)}({\bf v}_{j}(k)) - \nabla{f}^{(\iota)}({\bf x}^{(\iota)}) = a_{v_{j}(k), x^{(\iota)}} \| {\bf v}_{j}(k) - {\bf x}^{(\iota)} \| \overrightarrow{\hat{u}}
\end{split}
\end{equation}}
where $ \| \overrightarrow{\hat{u}} \| = 1 $ and $  0 \leq a_{v_{j}(k), x^{(\iota)}} \leq L $,
and 
{ \begin{equation}
\begin{split}
& {\bf v}_{j}(k) -{\bf x}^{(\iota)} = \| {\bf v}_{j}(k) -{\bf x}^{(\iota)} \| \overrightarrow{\hat{v}}
\end{split}
\end{equation}}
where $ \| \overrightarrow{\hat{v}} \| = 1 $.
Using what preceded we have the expression in \eqref{des_expaaa} equal to
\begin{align*}
\langle \nabla{f}^{(\iota)}({\bf v}_{j}(k)), {\bf x}^{(\iota)} - & {\bf v}_{j}(k) \rangle   = - \langle \nabla{f}^{(\iota)}({\bf v}_{j}(k)) - \nabla{f}^{(\iota)}({\bf x}^{(\iota)}), {\bf v}_{j}(k) -{\bf x}^{(\iota)} \rangle \\
 & = -a_{v_{j}(k), x^{(\iota)}} \| {\bf v}_{j}(k) - {\bf x}^{(\iota)} \| ^{2}  \langle \overrightarrow{\hat{u}},\overrightarrow{\hat{v}} \rangle
\end{align*}
But since $  \langle \overrightarrow{\hat{u}},\overrightarrow{\hat{v}} \rangle \geq 0 $ due to the monotonicity of the gradient we have $  0 \leq  b_{v_{j}(k), x^{(\iota)}} = \langle \overrightarrow{\hat{u}},\overrightarrow{\hat{v}} \rangle \leq  1 $.
Then
 \begin{equation}\label{res1aaa}
\begin{split}
 \langle \nabla{f}^{(\iota)}({\bf v}_{j}(k)), {\bf x}^{(\iota)} - {\bf v}_{j}(k) \rangle   = -a_{v_{j}(k), x^{(\iota)}} \| {\bf v}_{j}(k) - {\bf x}^{(\iota)} \| ^{2}  b_{v_{j}(k), x^{(\iota)}}
\end{split}
\end{equation}
where $ 0 \leq b_{v_{j}(k), x^{(\iota)}} \leq 1 $.

%But the last term is negative and its coefficient is decreasing. Thus, for $k>k_{1}$ which is dependent on the Lipcshitz constant of the function we have the last term less than $ -\mu \sum_{j=1}^{n}\|{\bf v}_{j}(k)-{\bf x}^{*}\|^{2} $.

Similarly, 
 \begin{equation}\label{res2aaa}
\begin{split}
\| \nabla{f}^{(\iota)}  ({\bf v}_{j}(k)) \|^{2} & =  \langle \nabla{f}^{(\iota)}({\bf v}_{j}(k)) - \nabla{f}^{(\iota)}({\bf x}^{(\iota)}),\nabla{f}^{(\iota)}({\bf v}_{j}(k)) - \nabla{f}^{(\iota)}({\bf x}^{(\iota)}) \rangle \\
& = a_{v_{j}(k), x^{(\iota)}}^{2} \| {\bf v}_{j}(k) -{\bf x}^{*} \| ^{2} \langle \overrightarrow{\hat{u}},\overrightarrow{\hat{u}} \rangle  
= a_{v_{j}(k), x ^{(\iota)}}^{2} \| {\bf v}_{j}(k) -{\bf x}^{(\iota)} \| ^{2}
\end{split}
\end{equation}

Let 
\begin{equation}\label{barx^i}
\begin{split}
\bar{\bf x}^{(\iota)} = argmax_{x^{(\iota)}} \| {\bf x}^{*} - {\bf x}^{(\iota)} \|^{2}
\end{split}
\end{equation}
and 
\begin{equation}\label{doublebarx^i}
\begin{split}
{\bar{\bar{\bf x}}}^{(\iota)} = argmax_{x^{(\iota)}} \| {\bf v}_{j}(k) - {\bf x}^{(\iota)} \|^{2}
\end{split}
\end{equation}

Then using the above two equalities and \eqref{res1aaa} and \eqref{res2aaa} we get the result.

\subsection{Martingale 1}

\begin{comment}
\begin{lemma}\label{Lemma6a}
Assume the following inequality holds a.s. for all $ k \geq k^{*} $,

{ \begin{equation}\label{eqw}
    v_{k+1} \leq a_{1}v_{k}+ a_{2,k}\max\limits_{k-B \leq \hat{k} \leq k} v_{\hat{k}}
\end{equation}}
$ v_{k}$, $ a_{1} $ and $ a_{2,k} $ are non-negative random variables where $ a_{1} + a_{2,k} \leq 1 $ and $ \{ a_{2,k} \} $ is a decreasing sequence. Then if for $ \rho = (a_{1}+a_{2,\bar{k}})^{\frac{1}{B+1}} $ where $ \bar{k} \geq k^{*} - 1 $ and $ \bar{k} \geq B $ (i.e., we index from $ k =0 $) we have
{ \begin{equation}
     v_{\bar{k}+n} \leq \rho^{\bar{k}+B+1}V_{0} \ \ \ a.s.
 \end{equation}}
for $ 1 \leq n \leq B + 1$
and
{ \begin{equation}\label{rec1}
     v_{k} \leq \rho^{k}V_{0} \ \ \ a.s.
 \end{equation}}
for all $ k \geq \bar{k}+B+1 $ where $ V_{0} > 0 $ as in proof and $ \rho $ as before.
\end{lemma}
\end{comment}

{\bf Proof of Lemma~4:}

Assume the following inequality holds a.s. for all $ k \geq k^{*} $
\begin{equation}
\begin{split}
v_{k+1} \leq a_{1}v_{k} + a_{2,k} \max\limits_{k-B \leq \hat{k} \leq k}   v_{k}
\end{split}
\end{equation}
where the variables are as defined in hypothesis.

We can list any consecutive $ B + 1 $ terms in an increasing order. Let us say we choose $ B+1 $ consecutive instants terms $ v_{k} $ beginning from instant $ k_{0} = \bar{k} - B $ until $ \bar {k} $. Take $ \rho =(a_{1} + a_{2,\bar{k}})^{\frac{1}{B+1}}  $ then $ v^{1} \leq v^{2} \leq \ldots \leq v^{B+1} $ where $ v^{l} = v_{\phi(l)} $ such that $ \phi(l) $ is a bijective mapping from $ [1, B+1] \rightarrow [ \bar{k} - B, \bar{k}]$. Then we can bound each $ v^{i} \leq \rho ^{B+1-i+m} V_{0} $ where $ 1 \leq i \leq B+1 $ and $ V_{0} = \max \frac{v^{i}}{\rho ^{i}} $, i.e., $ \rho < 1 $ and $ \rho ^{n} < \rho $ . And $ m $ is an arbitrary constant such that $ m \in \mathbb{Z}^{+} $. Let us choose $ \bar{k} \geq k^{*} - 1 $ where $ \bar {k} \geq B $ (i.e.,if we index from $ k=0 $). For a neat final result we choose $ m = \bar{k} $, that is, $ v^{i} = v_{\phi(\iota)} \leq \rho ^{B+1-i+\bar{k}} V_{0} $, i.e., $ v^{1} \leq \rho ^{\bar{k}+B}V_{0} , \ldots , v^{B+1} \leq \rho ^{\bar{k}} V_{0} $ .

For $ v_{\bar{k}} $ we have 
\begin{equation}
\begin{split}
v_{\bar{k}} \leq \rho ^ {\bar{k} + B - l } V_{0}
\end{split}
\end{equation}
where $ 0 \leq l \leq B $ 
since the terms $ v_{k} $ at instants from $ k_{0} = \bar{k} - B $ till $ \bar{k} $ can be put as defined earlier and $ v_{\bar{k}} $ can be any term in that order.

Take $ \bar{k} + 1 \geq k^{*} $. 

First since $ a_{1} + a_{2,\bar{k}} \leq 1 $ and  $ a_{1}+a_{2,k} \leq a_{1}+a_{2,\bar{k}} $ for $ k \geq \bar{k} $.

then we have $ 1  \leq (a_{1}+a_{2,\bar{k}})^{-\frac{B}{B+1}} $ and $ 1  \leq (a_{1}+a_{2,k})^{-\frac{B}{B+1}} $
which implies for $ k \geq \bar{k} $ that
\begin{align*}
a_{1} +  a_{2,k}  \rho ^ {-B}  & = a_{1} + a_{2,k}(a_{1}+a_{2,\bar{k}})^{-\frac{B}{B+1}} \\
& \leq a_{1}(a_{1}+a_{2,\bar{k}})^{-\frac{B}{B+1}}+ a_{2,k}(a_{1}+a_{2,\bar{k}})^{-\frac{B}{B+1}} \\
& = (a_{1}+a_{2,k})(a_{1}+a_{2,\bar{k}})^{-\frac{B}{B+1}} \\
& \leq (a_{1}+a_{2,\bar{k}})^{\frac{1}{B+1}} = \rho
\end{align*}
That is 
{ \begin{equation}\label{rho}
\begin{split}
a_{1} + & a_{2,k}  \rho ^ {-B}  \leq \rho
\end{split}
\end{equation}}
N.B. We aim to find the tightest upper bound for each case and the bound that holds for all cases, i.e., that takes into consideration the worst case possibility.

For Base case $ \bar{k} + 1 \geq k^{*} $ we have $ v_{\bar{k}} \leq \rho ^ {\bar{k} + B - l } V_{0} $ where $ 0 \leq l \leq B $ and $ \max\limits_{\bar{k}-B \leq \hat{k} \leq \bar{k}} v_{\hat{k}} \leq \rho^{\bar{k}}V_{0} $, then
\begin{align*}
 v_{\bar{k}+1} & \leq a_{1}v_{\bar{k}}+ a_{2,\bar{k}}\max\limits_{\bar{k}-B \leq \hat{k} \leq \bar{k}} v_{\hat{k}} \leq a_{1}\rho ^ {\bar{k}+B-l} V_{0} + a_{2,\bar{k}}\rho ^{\bar{k}} V_{0} \\
 & \leq  \ a_{1}\rho ^ {\bar{k} + B - l - (B-l)} V_{0} + a_{2,\bar{k}}\rho ^{\bar{k}} V_{0} = (a_{1} + a_{2,\bar{k}}) \rho ^ {\bar{k}} V_{0} \\ 
      & \leq \rho^{\bar{k} + B + 1} V_{0} \ \ \ a.s.
\end{align*}
For $ \bar{k} + 2 $ we can have $ \max\limits_{\bar{k}+1-B \leq \hat{k} \leq \bar{k} + 1} v_{\hat{k}} \leq \rho^{\bar{k}+1} V_{0} $ or $ \max\limits_{\bar{k}+1-B \leq \hat{k} \leq \bar{k} + 1} v_{\hat{k}} \leq \rho^{\bar{k}} V_{0} $ if the maximum which is $ \leq \rho^{\bar{k}}V_{0} $ is in $ v_{\bar{k}-B} $ or not, respectively.
Similarly with case $ \bar{k}+2 $ included and for $ \bar{k} +1 + n $ where $ 1 \leq n \leq B $ we can have  
% $ \max\limits_{\bar{k}+n-B \leq \hat{k} \leq \bar{k} + n} v_{\hat{k}} \leq \rho^{\bar{k}} V_{0} $ or 
 $ \max\limits_{\bar{k}+n-B \leq \hat{k} \leq \bar{k} + n} v_{\hat{k}} \leq \rho^{\bar{k}+i} V_{0} $ for each $ 0 \leq i \leq n $. And for each of these cases the set $ \{ v_{\bar{k}-B}, v_{\bar{k}-B+1},\ldots,v_{\bar{k}-B+n+1} \} $ contains terms that are $ \{ \rho^{\bar{k}}V_{0}, \rho^{\bar{k}+1}V_{0}, \ldots, \rho^{\bar{k}+i-1}V_{0}\} $ for each $ 0 \leq i \leq n $.

For base case $ k = \bar{k}+1$ we have $ v_{\bar{k}+1} \leq V_{0} \rho^{\bar{k}+B+1} $ as we have proved.
For induction case (i.e., we have $ v _{\bar{k}+n} \leq V_{0} \rho^{\bar{k}+B+1} $ and $ \max\limits_{\bar{k} + n - B \leq \hat{k} \leq \bar{k} + n} v_{\hat{k}} \leq \rho^{\bar{k}+i} V_{0} $ as mentioned in the previous paragraph) $ k = \bar{k}+2 $ until $ k =\bar{k}+B+1 $ we have $ B+1-i \geq 0 $ since $ 0 \leq i \leq n \leq B $. Then for this induction case where $ 1 \leq n \leq B $ and 
\begin{equation}\label{inda1}
\begin{split}
 v_{\bar{k}+1+n} & \leq a_{1}v_{\bar{k}+n}+ a_{2,\bar{k}+n}\max\limits_{\bar{k} + n - B \leq \hat{k} 
 \leq \bar{k} + n} v_{\hat{k}} \\
 & \leq a_{1}\rho ^ {\bar{k}+B+1} V_{0} + a_{2,\bar{k}+n}\rho ^{\bar{k}+i} V_{0} \\
 & \leq a_{1}\rho ^ {\bar{k}+B+1 - (B+1-i)} V_{0} + a_{2,\bar{k}+n}\rho ^{\bar{k}+i} V_{0}  \\
     & = (a_{1} + a_{2,\bar{k}+n}) \rho ^ {\bar{k}+i} V_{0} 
       \leq \rho^{\bar{k} + B + 1 + i} V_{0} \quad \text{almost surely}.
\end{split} 
\end{equation}
But the most relaxed bound that takes into consideration all cases $  0 \leq i \leq n $ for each $ n $ is $ v_{\bar{k}+1+n} \leq V_{0}\rho^{\bar{k}+B+1} $ and $ \max\limits_{\bar{k} + n - B \leq \hat{k} \leq \bar{k} + n} v_{\hat{k}} \leq \rho^{\bar{k}+i} V_{0} $ as mentioned in the previous paragraph. 
\\
Thus, for $ k = \bar{k} +1 $ until $ k = \bar{k} +B +1 $ we have $ v_{k} \leq V_{0} \rho ^{\bar{k}+B + 1} $ which is in turn used in the induction step of \eqref{inda1} and is the subsequent result.
 
For base case $ k = \bar{k} + B + 2 $ we have $ v_{\bar{k}+B +1} \leq \rho ^ {\bar{k}+B+1} V_{0} $ and $ \max\limits_{\bar{k}+1 \leq \hat{k} \leq \bar{k} + B + 1} v_{\hat{k}} \leq \rho^{\bar{k}+ B + 1} V_{0} $ then
\begin{align*}
 v_{\bar{k}+B+2} & \leq a_{1}v_{\bar{k}+B +1}+ a_{2,\bar{k}+B+1}\max\limits_{\bar{k} + 1 \leq \hat{k} \leq \bar{k} + B +1} v_{\hat{k}} \\
 & \leq a_{1}\rho ^ {\bar{k}+B+1} V_{0} + a_{2,\bar{k}+B+1}\rho ^{\bar{k}+B+1-B} V_{0} \\
 & = (a_{1} + a_{2,\bar{k}+B+1}\rho^{-B}) \rho ^ {\bar{k}+B+1} V_{0} \\
 & \leq \rho^{\bar{k} + B + 2} V_{0} \ \ \ a.s.
\end{align*}
For induction case $ k = \bar{k}+B+2+n $ where $ 1 \leq n \leq B + 1 $, that is, from $  k = \bar{k} + B + 3 $ until $ k = \bar{k}+2B+2 $ (i.e., we have $ v_{\bar{k}+B+n+1} \leq \rho^{\bar{k}+B+n+1} V_{0} $, that is, $ v_{k} \leq \rho^{k} V_{0} $ %or  $ v_{\bar{k}+B+n+1} \leq \rho^{\bar{k}+B+n+1} V_{0} $, 
and $ \max\limits_{\bar{k}+n+1 \leq \hat{k} \leq \bar{k} + B + n+1} v_{\hat{k}} \leq \rho^{\bar{k}+ B + 1} V_{0} $) we have $ B + 1 - n \geq 0 $ since $ 1 \leq n \leq B + 1 $.
Thus, for $ 1 \leq n \leq B $ we have
\begin{equation}\label{inda2}
\begin{split}
       &  v_{\bar{k}+B+2+n}  \leq a_{1}v_{\bar{k}+B +n+1}+ a_{2,\bar{k}+ B+n+1} \max\limits_{\bar{k} + n + 1 \leq \hat{k} \leq \bar{k} + B + n+1 } v_{\hat{k}} \\
 & \leq a_{1}\rho ^ {\bar{k}+B+n+1} V_{0} + a_{2,\bar{k}+B+n+1}\rho ^{\bar{k}+B+1-(B-n)} V_{0}  \\
     & = (a_{1} + a_{2,\bar{k}+B+n+1}\rho^{-B}) \rho ^ {\bar{k}+B+n+1} V_{0} 
       \leq \rho^{\bar{k} + B + n + 2} V_{0} \ \ \ a.s.
\end{split}
\end{equation}
and $ \max\limits_{\bar{k}+n+1 \leq \hat{k} \leq \bar{k} + B + n + 1} v_{\hat{k}} \leq \rho^{\bar{k}+ B + 1} V_{0} $  with $ v_{k} \leq \rho^{k} V_{0} $ \\
(i.e., $ v_{\bar{k}+B+n+1} \leq \rho^{\bar{k}+B+n+1} V_{0} $) to be used inductively in the induction step of \eqref{inda2} giving the latter term as the final result. 
N.B. Notice in the second inequality of \eqref{indb2} that $ n \leq B $ and $ n \geq 1 $, that is $ B \geq 1 $ since for $ B = 0 $ we have $ n =0 $ and we have only the base case, no induction case for this step.
Therefore, from $ k = \bar{k}+B+2 $ until $ k = \bar{k}+2B+2 $ we have $ v_{k} \leq \rho^{k} V_{0} $.

For $ k \geq \bar{k}+2B+3 $ we have for base case $ k= \bar{k}+2B+3 $ that $ v_{\bar{k}+2B+2} \leq \rho^{\bar{k}+2B+2} V_{0} $ and $  \max\limits_{\bar{k} + B + 2 \leq \hat{k} \leq \bar{k} + 2B + 2} v_{\hat{k}} \leq  \rho^{\bar{k}+B+2} V_{0} $ (i.e., $ v_{k} \leq \rho^{k} V_{0} $ and $ \max\limits_{k-B \leq \hat{k} \leq k} v_{\hat{k}} \leq \rho^{k-B} V_{0} $)  and for induction case $ k \geq \bar{k}+2B+4 $, we also have $ v_{k} \leq \rho^{k} V_{0} $ and $ \max\limits_{k-B \leq \hat{k} \leq k} v_{\hat{k}} \leq \rho^{k-B} V_{0} $, therefore, we can join both cases in one inequality 
\begin{equation}\label{inda3}
\begin{split}
 v_{k+1} & \leq a_{1}v_{k}+ a_{2,k}\max\limits_{k-B \leq \hat{k} \leq k} v_{\hat{k}} \leq a_{1}\rho ^ {k} V_{0} + a_{2,k}\rho ^{k-B} V_{0} \\
 & = (a_{1} + a_{2,k}\rho^{-B}) \rho ^ {k} V_{0} \leq \rho^{k+1} V_{0} \ \ \ a.s.
 \end{split}
\end{equation}
and $ \max\limits_{k-B \leq \hat{k} \leq k} v_{\hat{k}} \leq \rho^{k-B} V_{0} $ and $ v_{k} \leq \rho ^ {k} V_{0}$ (i.e., $ v_{k+1} \leq \rho ^ {k+1} V_{0}$) to be used inductively in \eqref{inda3} induction step and the latter term as the final result.

Therefore, the lemma follows.

\subsection{Martingale 2}

\begin{comment}
\begin{lemma}\label{Lemma6a1}
Assume the following inequality holds a.s. for all $ k \geq k^{*} $,

{ \begin{equation}\label{eqw1}
    v_{k+1} \leq a_{1}v_{k}+ a_{2,k}\max\limits_{k-B \leq \hat{k} \leq k} v_{\hat{k}}+a_{3,k}
\end{equation}}
$ v_{k}$, $ a_{1} $, $ a_{2,k} $  and $ a_{3} $ are non-negative random variables where $ a_{1} + a_{2,k} \leq 1 $ and $ \{ a_{2,k} \} $ is a decreasing sequence. Then if for $ \rho = (a_{1}+a_{2,\bar{k}})^{\frac{1}{B+1}} $ where $ \bar{k} \geq k^{*} - 1 $ and $ \bar{k} \geq B $ (i.e., we index from $ k =0 $), $ a_{1} \leq 1 -\mu $, $ a_{2,\bar{k}} \leq \frac{1-\frac{1}{l}}{(B+2)^{\theta}} + \mu - 1 $  and $ a_{3,k} \leq \frac{b_{k+1}}{l}\eta$ where $ b_{k}=\frac{1}{(k+a)^{\theta}} $, $ l \geq 1 $, $ \theta \in (0,1] $ and $ \eta =\frac{a_{3}}{1-a_{1}-a_{2,\bar{k}}} $ we have
{ \begin{equation}
     v_{\bar{k}+n} \leq \rho^{\bar{k}+B+1}V_{0} + b_{\bar{k}+n} \eta \ \ \ a.s.
 \end{equation}}
for $ 1 \leq n \leq B + 1$
and
{ \begin{equation}\label{rec11}
     v_{k} \leq \rho^{k}V_{0} + b_{k} \eta \ \ \ a.s.
 \end{equation}}
for all $ k \geq \bar{k}+B+1 $ where $ V_{0} > 0 $ as in proof and $ \rho $ and $ \eta $ as before.
\end{lemma}
\end{comment}

{\bf Proof of Lemma~5:}

Assume the following inequality holds a.s. for all $ k \geq k^{*} $
\begin{equation}
\begin{split}
v_{k+1} \leq a_{1}v_{k} + a_{2,k} \max\limits_{k-B \leq \hat{k} \leq k}   v_{k} + a_{3,k}
\end{split}
\end{equation}
where the variables are as defined in hypothesis.

We can list any consecutive $ B + 1 $ terms in an increasing order. Let us say we choose $ B+1 $ consecutive instants terms $ v_{k} $ beginning from instant $ k_{0} = \bar{k} - B $ until $ \bar {k} $. Take $ \rho =(a_{1} + a_{2,\bar{k}})^{\frac{1}{B+1}}  $ then $ v^{1} \leq v^{2} \leq \ldots \leq v^{B+1} $ where $ v^{l} = v_{\phi(l)} $ such that $ \phi(l) $ is a bijective mapping from $ [1, B+1] \rightarrow [ \bar{k} - B, \bar{k}]$. Then we can bound each $ v^{i} \leq \rho ^{B+1-i+m} V_{0} $ where $ 1 \leq i \leq B+1 $ and $ V_{0} = \max \frac{v^{i}}{\rho ^{i}} $, i.e., $ \rho < 1 $ and $ \rho ^{n} < \rho $ and $ \eta $ as in hypothesis. And $ m $ is an arbitrary constant such that $ m \in \mathbb{Z}^{+} $. Let us choose $ \bar{k} \geq k^{*} - 1 $ where $ \bar {k} \geq B $ (i.e.,if we index from $ k=0 $). For a neat final result we choose $ m = \bar{k} $, that is, $ v^{i} = v_{\phi(\iota)} \leq \rho ^{B+1-i+\bar{k}} V_{0} $, i.e., $ v^{1} \leq \rho ^{\bar{k}+B}V_{0},\ldots, v^{B+1} \leq \rho ^{\bar{k}} V_{0} $.

For $ v_{\bar{k}} $ we have 
\begin{equation}
\begin{split}
v_{\bar{k}} \leq \rho ^ {\bar{k} + B - l } V_{0}
\end{split}
\end{equation}
where $ 0 \leq l \leq B $ 
since the terms $ v_{k} $ at instants from $ k_{0} = \bar{k} - B $ till $ \bar{k} $ can be put as defined earlier and $ v_{\bar{k}} $ can be any term in that order.

Take $ \bar{k} + 1 \geq k^{*} $. 

First since $ a_{1} + a_{2,\bar{k}} \leq 1 $ and  $ a_{1}+a_{2,k} \leq a_{1}+a_{2,\bar{k}} $ for $ k \geq \bar{k} $.

then we have $ 1  \leq (a_{1}+a_{2,\bar{k}})^{-\frac{B}{B+1}} $ and $ 1  \leq (a_{1}+a_{2,k})^{-\frac{B}{B+1}} $
which implies for $ k \geq \bar{k} $ that
\begin{align*}
a_{1} +  a_{2,k}  \rho ^ {-B} & = a_{1} + a_{2,k}(a_{1}+a_{2,\bar{k}})^{-\frac{B}{B+1}} \\
& \leq a_{1}(a_{1}+a_{2,\bar{k}})^{-\frac{B}{B+1}} 
+ a_{2,k}(a_{1}+a_{2,\bar{k}})^{-\frac{B}{B+1}} \\
& = (a_{1}+a_{2,k})(a_{1}+a_{2,\bar{k}})^{-\frac{B}{B+1}} \\
& \leq (a_{1}+a_{2,\bar{k}})^{\frac{1}{B+1}} = \rho
\end{align*}
That is 
{ \begin{equation}\label{rho1}
\begin{split}
a_{1} + & a_{2,k}  \rho ^ {-B}  \leq \rho
\end{split}
\end{equation}} 
N.B. We aim to find the tightest upper bound for each case and the bound that holds for all cases, i.e., that takes into consideration the worst case possibility.

For Base case $ \bar{k} + 1 \geq k^{*} $ we have $ v_{\bar{k}} \leq \rho ^ {\bar{k} + B - l } V_{0} +\eta $ where $ 0 \leq l \leq B $ and $ \max\limits_{\bar{k}-B \leq \hat{k} \leq \bar{k}} v_{\hat{k}} \leq \rho^{\bar{k}}V_{0} +\eta $, then
\begin{align*}
 v_{\bar{k}+1} & \leq a_{1}v_{\bar{k}}+ a_{2,\bar{k}}\max\limits_{\bar{k}-B \leq \hat{k} \leq \bar{k}} v_{\hat{k}} + a_{3,k}  \\
 & \leq a_{1}\rho ^ {\bar{k}+B-l} V_{0} + a_{2,\bar{k}}\rho ^{\bar{k}} V_{0} + a_{3,\bar{k}} \\ 
 & \leq   a_{1}\rho ^ {\bar{k} + B - l - (B-l)} V_{0} + a_{2,\bar{k}}\rho ^{\bar{k}} V_{0} + a_{3,\bar{k}} \\
 & = (a_{1} + a_{2,\bar{k}}) \rho ^ {\bar{k}} V_{0} + b_{\bar{k}+1} \eta
      \leq \rho^{\bar{k} + B + 1} V_{0} + b_{\bar{k}+1} \eta \ \ \ a.s.
\end{align*}
For $ \bar{k} + 2 $ we can have $ \max\limits_{\bar{k}+1-B \leq \hat{k} \leq \bar{k} + 1} v_{\hat{k}} \leq \rho^{\bar{k}+1} V_{0} + b_{\bar{k}+1} \eta $ or $ \max\limits_{\bar{k}+1-B \leq \hat{k} \leq \bar{k} + 1} v_{\hat{k}} \leq \rho^{\bar{k}} V_{0} + b_{\bar{k}+1} \eta $ if the maximum which is $ \leq \rho^{\bar{k}}V_{0} + b_{\bar{k}+1} \eta $ is in $ v_{\bar{k}-B} $ or not, respectively.
Similarly with case $ \bar{k}+2 $ included and for $ \bar{k} +1 + n $ where $ 1 \leq n \leq B $ we can have  
% $ \max\limits_{\bar{k}+n-B \leq \hat{k} \leq \bar{k} + n} v_{\hat{k}} \leq \rho^{\bar{k}} V_{0} + \eta $ or 
$ \max\limits_{\bar{k}+n-B \leq \hat{k} \leq \bar{k} + n} v_{\hat{k}} \leq \rho^{\bar{k}+i} V_{0} + b_{\bar{k}+1} \eta$ for each $ 0 \leq i \leq n $. And for each of these cases the set $ \{ v_{\bar{k}-B}, v_{\bar{k}-B+1},\ldots,v_{\bar{k}-B+n+1} \} $ contains terms that are $ \{ \rho^{\bar{k}}V_{0} + b_{\bar{k}+1} \eta, \rho^{\bar{k}+1}V_{0} + b_{\bar{k}+1} \eta, \ldots, \rho^{\bar{k}+i-1}V_{0} + b_{\bar{k}+1} \eta \} $ for each $ 0 \leq i \leq n $.

For base case $ k = \bar{k}+1$ we have $ v_{\bar{k}+1} \leq V_{0} \rho^{\bar{k}+B+1} + b_{\bar{k}+1} \eta $ as we have proved.
For induction case (i.e., we have $ v_{\bar{k}+n} \leq V_{0} \rho^{\bar{k}+B+1} + b_{\bar{k}+n} \eta $ and $ \max\limits_{\bar{k} + n - B \leq \hat{k} \leq \bar{k} + n} v_{\hat{k}} \leq \rho^{\bar{k}+i} V_{0} + b_{\bar{k}+1} \eta $ as mentioned in the previous paragraph) $ k = \bar{k}+2 $ until $ k =\bar{k}+B+1 $ we have $ B+1-i \geq 0 $ since $ 0 \leq i \leq n \leq B $. Then for $ 1 \leq n \leq B $ we have
 \begin{equation*}\label{indb1}
\begin{split}
   & v_{\bar{k}+1+n}  \leq a_{1}v_{\bar{k}+n}+ a_{2,\bar{k}+n}\max\limits_{\bar{k} + n - B \leq \hat{k} \leq \bar{k} + n} v_{\hat{k}} + a_{3,\bar{k}+n} \\
 & \leq a_{1}\rho ^ {\bar{k}+B+1} V_{0} + a_{2,\bar{k}+n}\rho ^{\bar{k}+i} V_{0} + a_{1}b_{\bar{k}+n}\eta + a_{2,\bar{k}+n}b_{\bar{k}+1}\eta \\
 & \hspace{3cm} + a_{3,\bar{k}+n} \\
      & \leq a_{1}\rho ^ {\bar{k}+B+1 - (B+1-i)} V_{0} + a_{2,\bar{k}+n}\rho ^{\bar{k}+i} V_{0} + a_{1}b_{\bar{k}+n}\eta \\
   &  \hspace{2cm} + a_{2,\bar{k}+n}b_{\bar{k}+1}\eta + a_{3,\bar{k}+n} \\
      & = (a_{1} + a_{2,\bar{k}+n}) \rho ^ {\bar{k}+i} V_{0} + a_{1}b_{\bar{k}+n}\eta + a_{2,\bar{k}+n}b_{\bar{k}+1}\eta + a_{3,\bar{k}+n} \\
      & \leq \rho^{\bar{k} + B + 1 + i} V_{0} + a_{1}b_{\bar{k}+n}\eta + a_{2,\bar{k}+n}b_{\bar{k}+1}\eta + a_{3,\bar{k}+n} 
      \end{split}
\end{equation*}
      
 \begin{equation}\label{indb1-1}
      \begin{split}
      & \leq \rho^{\bar{k} + B + 1 + i} V_{0} + (1-\mu)b_{\bar{k}+n}\eta + [(1-\frac{1}{l})\frac{1}{(B+2)^{\theta}}+\mu -1]b_{\bar{k}+1}\eta \\
      & \hspace{3cm} + \frac{b_{\bar{k}+n+1}}{l}\eta \\
      & \leq \rho^{\bar{k} + B + 1 + i} V_{0} + (1-\mu)(b_{\bar{k}+n}-b_{\bar{k}+1})\eta + \frac{b_{\bar{k}+1}}{(B+2)^{\theta}}\eta + \frac{b_{\bar{k}+n+1}}{l}\eta \\ & \leq \rho^{\bar{k} + B + 1 + i} V_{0} + \frac{(1-\frac{1}{l})}{(B+2)^{\theta}(\bar{k}+1+a)^{\theta}}\eta + \frac{1}{l(\bar{k}+n+1+a)^{\theta}}\eta \\
      & \leq \rho^{\bar{k} + B + 1 + i} V_{0} + \frac{(1-\frac{1}{l})}{((B+2)(\bar{k}+1+a))^{\theta}}\eta + \frac{1}{l(\bar{k}+n+1+a)^{\theta}}\eta \\
      & \leq \rho^{\bar{k} + B + 1 + i} V_{0} + \frac{(1-\frac{1}{l})}{((n+2)(\bar{k}+1+a))^{\theta}}\eta + \frac{1}{l(\bar{k}+n+1+a)^{\theta}}\eta \\
      & \leq \rho^{\bar{k} + B + 1 + i} V_{0} + \frac{(1-\frac{1}{l})}{(n\bar{k}+n+na+2\bar{k}+2+2a)^{\theta}}\eta + \frac{1}{l(\bar{k}+n+1+a)^{\theta}}\eta \\
      & \leq \rho^{\bar{k} + B + 1 + i} V_{0} + \frac{(1-\frac{1}{l})}{(\bar{k}+n+1+a)^{\theta}}\eta + \frac{1}{l(\bar{k}+n+1+a)^{\theta}}\eta \\
      & \leq \rho^{\bar{k} + B + 1 + i} V_{0} + b_{\bar{k}+n+1}\eta  
     \end{split}
\end{equation}

Notice for induction case we have $ n \geq 1 $ then $ B \geq n \geq 1 $ so for induction case to exist $ B \geq 1 $ however the base case can exist for any $ B \geq 0 $. Then we choose the values of $ a_{2,\bar{k}} $ accordingly, or to be consistent we take the maximum of the bounds, but we choose the first alternative which depends on whether an induction step is needed or not according to the value of $ B $.
But the most relaxed bound that takes into consideration all cases $  0 \leq i \leq n $ for each $ n $ is $ v_{\bar{k}+1+n} \leq V_{0}\rho^{\bar{k}+B+1} +  b_{\bar{k}+n+1} \eta $ and $ \max\limits_{\bar{k} + n - B \leq \hat{k} \leq \bar{k} + n} v_{\hat{k}} \leq \rho^{\bar{k}+i} V_{0} + b_{\bar{k}+1} \eta $ as mentioned in the previous paragraph. 
\\
Thus, for $ k = \bar{k} +1 $ until $ k = \bar{k} +B +1 $ we have $ v_{k} \leq V_{0} \rho ^{\bar{k}+B + 1} +  b_{k} \eta $ which is in turn used in the induction step of \eqref{indb1-1} and is the subsequent result.
 
For base case $ k = \bar{k} + B + 2 $ we have $ v_{\bar{k}+B +1} \leq \rho ^ {\bar{k}+B+1} V_{0} +  b_{\bar{k}+B+1} \eta $ and $ \max\limits_{\bar{k}+1 \leq \hat{k} \leq \bar{k} + B + 1} v_{\hat{k}} \leq \rho^{\bar{k}+ B + 1} V_{0} +  b_{\bar{k}+1} \eta $ then
 \begin{align*}
v_{\bar{k}+B+2} & \leq a_{1}v_{\bar{k}+B +1}+ a_{2,\bar{k}+B+1}\max\limits_{\bar{k} + 1 \leq \hat{k} \leq \bar{k} + B +1} v_{\hat{k}} + a_{3,\bar{k} + B +1} \\
 & \leq a_{1}\rho ^ {\bar{k}+B+1} V_{0} + a_{2,\bar{k}+B+1}\rho ^{\bar{k}+B+1-B} V_{0} \\
 & + a_{1} b_{\bar{k} + B +1}\eta + a_{2,\bar{k}+B+1}b_{\bar{k}+1}\eta + a_{3,\bar{k} + B +1}  \\
 & = (a_{1} + a_{2,\bar{k}+B+1}\rho^{-B}) \rho ^ {\bar{k}+B+1} V_{0} \\
 & \hspace{2cm} + a_{1} b_{\bar{k} + B +1}\eta + a_{2,\bar{k}+B+1}b_{\bar{k}+1}\eta + a_{3,\bar{k} + B +1}  \\
 & \leq \rho^{\bar{k} + B + 2} V_{0} + a_{1} b_{\bar{k} + B +1}\eta + a_{2,\bar{k}+B+1}b_{\bar{k}+1}\eta + a_{3,\bar{k} + B +1} \\
 & \leq \rho^{\bar{k} + B + 2} V_{0} +  (1-\mu)b_{\bar{k} + B +1}\eta + (\frac{1-\frac{1}{l}}{(B+2)^{\theta}} + \mu - 1 ) b_{\bar{k}+1}\eta + \frac{b_{\bar{k} + B +2}}{l} \eta \\
& \leq \rho^{\bar{k} + B + 2} V_{0} +  + (1-\mu)(b_{\bar{k} + B +1}\eta-b_{\bar{k}+1}) + \frac{1-\frac{1}{l}}{(B+2)^{\theta}}b_{\bar{k}+1}\eta + \frac{b_{\bar{k} + B +2}}{l} \eta \\
\end{align*}

 \begin{align*} 
& \leq \rho^{\bar{k} + B + 2} V_{0} + (1-\mu)(b_{\bar{k} + B +1}\eta-b_{\bar{k}+1}) + \frac{1-\frac{1}{l}}{(B+2)^{\theta}(k+1)^{\theta}}\eta + \frac{b_{\bar{k} + B +2}}{l} \eta \\
& \leq \rho^{\bar{k} + B + 2} V_{0} + \frac{1-\frac{1}{l}}{((B+2)(\bar{k}+1+a))^{\theta}}\eta + \frac{1}{l(\bar{k} + B +2+a)^{\theta}} \eta \\
& \leq \rho^{\bar{k} + B + 2} V_{0} + \frac{1-\frac{1}{l}}{((B+2)(\bar{k}+1+a))^{\theta}}\eta+ \frac{1}{l(\bar{k} + B + 2 + a)^{\theta}} \eta \\
& \leq \rho^{\bar{k} + B + 2} V_{0} + \frac{1-\frac{1}{l}}{(B\bar{k}+B+B+2k+2+2a)^{\theta}}\eta+ \frac{1}{l(\bar{k} + B + 2 + a)^{\theta}} \eta \\
& \leq \rho^{\bar{k} + B + 2} V_{0} + \frac{1-\frac{1}{l}}{(\bar{k}+B+2+a)^{\theta}}\eta+ \frac{1}{l(\bar{k} + B + 2 + a)^{\theta}} \eta \\
& \leq \rho^{\bar{k} + B + 2} V_{0} + \frac{1}{(\bar{k}+B+2+a)^{\theta}}\eta \\
& \leq \rho^{\bar{k} + B + 2} V_{0} + b_{\bar{k}+B+2}\eta 
\end{align*}
Thus, for induction case $ k = \bar{k}+B+1+n $ where $ 1 \leq n \leq B + 1 $, that is, from $  k = \bar{k} + B + 2 $ until $ k = \bar{k}+2B+2 $ (i.e., we have $ v_{\bar{k}+B+n} \leq \rho^{\bar{k}+B+n} V_{0} + b_{\bar{k}+B+n} \eta $, that is, $ v_{k} \leq \rho^{k} V_{0} + b_{k} \eta $ or  $ v_{\bar{k}+B+n+1} \leq \rho^{\bar{k}+B+n+1} V_{0} + b_{\bar{k}+B+n+1} \eta $, and $ \max\limits_{\bar{k}+n \leq \hat{k} \leq \bar{k} + B + n} v_{\hat{k}} \leq \rho^{\bar{k}+ B + 1} V_{0} + b_{\bar{k}+n} \eta $) we have $ B + 1 - n \geq 0 $ since $ 1 \leq n \leq B + 1 $
 \begin{equation}\label{indb2}
\begin{split}
    & v_{\bar{k}+B+2+n}  \leq a_{1}v_{\bar{k}+B +n + 1}+ a_{2,\bar{k}+ B+n+1}\max\limits_{\bar{k} + n +1\leq \hat{k} \leq \bar{k} + B + n + 1} v_{\hat{k}} \\
    & \hspace{3cm} + a_{3,\bar{k}+B +n + 1} \\ 
    & \leq a_{1}\rho ^ {\bar{k}+B+n+1} V_{0} + a_{2,\bar{k}+B+n+1}\rho ^{\bar{k}+B+1-(B-n)} V_{0}  + a_{1}b_{\bar{k}+B +n+1}\eta \\
    & \hspace{2cm} + a_{2,\bar{k}+B+n+1}b_{\bar{k}+n+1}\eta + a_{3,\bar{k}+B +n+1}  \\
     & = (a_{1} + a_{2,\bar{k}+B+n+1}\rho^{-B}) \rho ^ {\bar{k}+B+n+1} V_{0} a_{1}b_{\bar{k}+B +n+1}\eta \\
     & \hspace{2cm} + a_{2,\bar{k}+B+n+1}b_{\bar{k}+n+1}\eta + a_{3,\bar{k}+B +n+1} \\
     & \leq \rho^{\bar{k} + B + n + 2} V_{0} a_{1}b_{\bar{k}+B +n+1}\eta + a_{2,\bar{k}+B+n+1}b_{\bar{k}+n+1}\eta + a_{3,\bar{k}+B +n+1} 
\end{split}    
\end{equation}

\vspace{-0.75cm}
 \begin{equation}\label{indb2-1}
\begin{split}
& \leq \rho^{\bar{k} + B + n + 2} V_{0} + (1-\mu)b_{\bar{k}+B +n+1} \eta + (\frac{1-\frac{1}{l}}{(B+2)^{\theta}}+\mu-1)b_{\bar{k}+n+1} \eta \\
& \hspace{3cm} + \frac{b_{\bar{k}+B +n+2}}{l} \eta  \\
& \leq \rho^{\bar{k} + B + n + 2} V_{0} + (1-\mu)(b_{\bar{k}+B +n+1} - b_{\bar{k}+n+1}) \eta +\frac{1-\frac{1}{l}}{(B+2)^{\theta}}b_{\bar{k}+n+1} \eta \\
& \hspace{3cm} + \frac{b_{\bar{k}+B +n+2}}{l} \eta  \\
& \leq \rho^{\bar{k} + B + n + 2} V_{0} + \frac{1-\frac{1}{l}}{(B+2)^{\theta}(\bar{k}+n+1+a)^{\theta}} \eta + \frac{1}{l(\bar{k}+B +n+2+a)^{\theta}} \eta  \\
& \leq \rho^{\bar{k} + B + n + 2} V_{0} + \frac{1-\frac{1}{l}}{((B+2)(\bar{k}+n+1+a))^{\theta}} \eta + \frac{1}{l(\bar{k}+B +n+2+a)^{\theta}} \eta  \\
& \leq \rho^{\bar{k} + B + n + 2} V_{0} + \frac{1-\frac{1}{l}}{(B\bar{k}+Bn+B+Ba+2\bar{k}+2n+2+2a)^{\theta}} \eta \\
& \hspace{3cm} + \frac{1}{l(\bar{k}+B +n+2+a)^{\theta}} \eta  \\
& \leq \rho^{\bar{k} + B + n + 2} V_{0} + \frac{1-\frac{1}{l}}{(\bar{k}+B+n+2+a)^{\theta}} \eta + \frac{1}{l(\bar{k}+B +n+2+a)^{\theta}} \eta  \\
& \leq \rho^{\bar{k} + B + n + 2} V_{0} + \frac{1}{(\bar{k}+B+n+2+a)^{\theta}} \eta \\
& \leq \rho^{\bar{k} + B + n + 2} V_{0} + b_{\bar{k}+B+n+2} \eta 
\end{split}    
\end{equation}
and $ \max\limits_{\bar{k}+n+1 \leq \hat{k} \leq \bar{k} + B + n+1} v_{\hat{k}} \leq \rho^{\bar{k}+ B + 1} V_{0} + b_{\bar{k}+n+1} \eta $  with $ v_{k} \leq \rho^{k} V_{0} + b_{k} \eta $ \\ 
(i.e., $ v_{\bar{k}+B+n+1} \leq \rho^{\bar{k}+B+n+1} V_{0} + b_{\bar{k}+B+n+1} \eta $) to be used inductively in the induction step of \eqref{indb2-1} giving the latter term as the final result. 
\\
Therefore, from $ k = \bar{k}+B+2 $ until $ k = \bar{k}+2B+2 $ we have $ v_{k} \leq \rho^{k} V_{0} + b_{k} \eta $.

For $ k \geq \bar{k}+2B+3 $ we have for base case $ k= \bar{k}+2B+3 $ that $ v_{\bar{k}+2B+2} \leq \rho^{\bar{k}+2B+2} V_{0} + b_{\bar{k}+2B+2} \eta $ and $  \max\limits_{\bar{k} + B + 2 \leq \hat{k} \leq \bar{k} + 2B + 2} v_{\hat{k}} \leq  \rho^{\bar{k}+B+2} V_{0} + b_{\bar{k} + B + 2} \eta $ (i.e., $ v_{k} \leq \rho^{k} V_{0} + b_{k} \eta $ and $ \max\limits_{k-B \leq \hat{k} \leq k} v_{\hat{k}} \leq \rho^{k-B} V_{0} + b_{k-B} \eta $) and for induction case $ k \geq \bar{k}+2B+4 $, we also have $ v_{k} \leq \rho^{k} V_{0} + b_{k} \eta $ and $ \max\limits_{k-B \leq \hat{k} \leq k} v_{\hat{k}} \leq \rho^{k-B} V_{0} + b_{k-B} \eta $, therefore, we can join both cases in one inequality 
 \begin{equation}\label{indb3}
\begin{split}
  v_{k+1} & \leq a_{1}v_{k}+ a_{2,k}\max\limits_{k-B 
 \leq \hat{k} \leq k} v_{\hat{k}} + a_{3,k} \\
 & \leq a_{1}\rho ^ {k} V_{0} + a_{2,k}\rho ^{k-B} V_{0} + a_{1}b_{k}\eta + a_{2,k}b_{k-B}\eta + a_{3,k} \\
 & = (a_{1} + a_{2,k}\rho^{-B}) \rho ^ {k} V_{0} + a_{1}b_{k}\eta + a_{2,k}b_{k-B}\eta + a_{3,k} \\
 & \leq \rho^{k+1} V_{0} + a_{1}b_{k}\eta + a_{2,k}b_{k-B}\eta + a_{3,k} 
\end{split}    
\end{equation}
 \begin{equation}\label{indb3-1}
\begin{split}
& \leq \rho^{k+1} V_{0} + (1-\mu)b_{k}\eta + (\frac{1-\frac{1}{l}}{(B+2)^{\theta}}+\mu -1)b_{k-B} \eta + \frac{b_{k+1}}{l} \eta \\
& \leq \rho^{k+1} V_{0} + (1-\mu)(b_{k}-b_{k-B}) \eta + \frac{1-\frac{1}{l}}{(B+2)^{\theta}}b_{k-B} \eta + \frac{b_{k+1}}{l} \eta \\
& \leq \rho^{k+1} V_{0} + \frac{1-\frac{1}{l}}{(B+2)^{\theta}(k-B+a)^{\theta}} \eta + \frac{1}{l(k+1+a)^{\theta}} \eta \\
& \leq \rho^{k+1} V_{0} + \frac{1-\frac{1}{l}}{((B+2)(k-B+a))^{\theta}} \eta + \frac{1}{l(k+1+a)^{\theta}} \eta \\
& \leq \rho^{k+1} V_{0} + \frac{1-\frac{1}{l}}{(Bk-B^{2}+Ba+2k-2B+2a)^{\theta}} \eta + \frac{1}{l(k+1+a)^{\theta}} \eta \\
& \leq \rho^{k+1} V_{0} + \frac{1-\frac{1}{l}}{(k(B+1)-B^{2}+Ba+k-2B+2a)^{\theta}} \eta \\
& \hspace{3cm} + \frac{1}{l(k+1+a)^{\theta}} \eta \\
& \leq \rho^{k+1} V_{0} + \frac{1-\frac{1}{l}}{((2B+2)(B+1)-B^{2}+Ba+k-2B+2a)^{\theta}} \eta \\
& \hspace{3cm}+ \frac{1}{l(k+1+a)^{\theta}} \eta \\
& \leq \rho^{k+1} V_{0} + \frac{1-\frac{1}{l}}{(B^{2}+4B+2+Ba+k-2B+2a)^{\theta}} \eta + \frac{1}{l(k+1+a)^{\theta}} \eta \\
& \leq \rho^{k+1} V_{0} + \frac{1-\frac{1}{l}}{(k+a+1)^{\theta}} \eta + \frac{1}{l(k+1+a)^{\theta}} \eta \\
& \leq \rho^{k+1} V_{0} + \frac{1}{(k+a+1)^{\theta}} \eta \\
& \leq \rho^{k+1} V_{0} + b_{k+1} \eta  
\end{split}    
\end{equation}
since we have used before the last inequality that $ Bk- B^{2} - B \geq 0 $ since $ k \geq \bar{k} +2B+2 $ where $ \bar{k} \geq 0 $ and $ B \geq 0 $.

And $ \max\limits_{k+1-B \leq \hat{k} \leq k+1} v_{\hat{k}} \leq \rho^{k+1-B} V_{0} + b_{k+1-B} \eta $ and $ v_{k} \leq \rho ^ {k} V_{0} + b_{k} \eta $ (i.e., $ v_{k+1} \leq \rho ^ {k+1} V_{0} + b_{k+1} \eta $) to be used inductively in \eqref{indb3} induction step and the latter term as the final result.

Therefore, the lemma follows.

\subsection{$ \rho_{3} $ and $ \eta_{3} $}

 \begin{equation}\label{ConvexConvergenceRateRho}
\begin{split}
\rho_{3}= & ((1-\mu) + 4 p L \alpha_{\bar{k}_{3}} \| {\bf A}^{r_{(\iota)}} \| _{\infty} \|{\bf B}^{r_{(\iota)}}\| _{2,\infty} \\
& + 8 p (1-\gamma_{(0)}) L^{2} \alpha_{\bar{k}_{3}}^{2}  \| {\bf A}^{r_{(\iota)}} \| ^{2} _{\infty} \|{\bf B}^{r_{(\iota)}}\| ^{2} _{2,\infty} \\
& + 8 (1-\gamma_{(0)}) \alpha_{\bar{k}_{3}}^{2}  p \gamma_{max}L ^{2} + \alpha_{k}( 1 + 4 ( 1 - \gamma_{(0)}) \alpha_{k} L)p L)^{\frac{1}{H+1}}
\end{split}
\end{equation}

\begin{equation}\label{ConvexConvergenceRateeta}
\begin{split}
\hspace{-1.5cm} \eta_{3} = \eta_{3,1}/\eta_{3,2}
\end{split}
\end{equation}

\begin{equation}
\begin{split}
\eta_{3,1}= & 2 \alpha_{\bar{k}_{3}} \sum_{j=1}^{n}(|I|+2p)\gamma_{max} L \| {\bf x}^{*} - \bar{\bf x}^{(\iota)} \| ^{2} \\
 & +  \frac{1}{1-\mu} \alpha_{k}n (1 + 4 (1-\gamma_{(0)}) \alpha_{k} L \max\limits_{(\iota)}\| {\bf A}^{r_{(\iota)}} \| _{\infty} \|{\bf B}^{r_{(\iota)}} \| _{2,\infty}  ) \\
 & \times p L  \max\limits_{(\iota)} \| {\bf A}^{r_{(\iota)}} \| _{\infty} \| {\bf B}^{r_{(\iota)}} \| _{2,\infty} \| {\bf x}^{*}- {\bf x}^{\lambda,r_{(\iota)}} \| ^{2} 
\end{split}
\end{equation} 

\begin{equation}
\begin{split} 
\eta_{3,2} = & \mu - 4 p L \alpha_{\bar{k}_{3}} \max\limits_{(\iota)} \| {\bf A}^{r_{(\iota)}} \| _{\infty} \|{\bf B}^{r_{(\iota)}}\| _{2,\infty} \\
& - 8 p (1-\gamma_{(0)}) L^{2} \alpha_{\bar{k}_{3}}^{2}  \max\limits_{(\iota)} \| {\bf A}^{r_{(\iota)}} \| ^{2} _{\infty} \|{\bf B}^{r_{(\iota)}}\| ^{2} _{2,\infty} 
 \\
 & - 8 (1-\gamma_{(0)}) \alpha_{\bar{k}_{3}}^{2}  p \gamma_{max}L ^{2} + \alpha_{k}( 1 + 4 ( 1 - \gamma_{(0)}) \alpha_{k} L)p L 
\end{split}
\end{equation}

\subsection{Gradient Coding Algorithms}

The following are the gradient decoding and encoding algorithms, respectively.

\begin{algorithm}[H]
\caption{Algorithm to compute {\bf A}}
\label{alg:algorithm-A}
\begin{algorithmic}[1]
\State{\bf Input:} \ {\bf B} satisfying Condition~4.1, $ s (< n) $  
\State{ {\bf Output:} \ {\bf A} such that {\bf A}{\bf B} = ${\bf 1}_{{n\choose{s}} \times n} $ }  
\State {f = binom(n, s);}  
\State A = zeros(f, n); 
\State {\bf foreach} \ $ I \subset [n] $ \ s.t. \ $ |I| = (n-s) $ \ {\bf do} 
\State a = zeros(1, k);  
\State x = ones(1, k)/B(I, :);
\State $a(I)$ = x;  
\State A = [A; a];  
\State {\bf end}
\end{algorithmic}
\end{algorithm}

\begin{algorithm}[H]
\caption{Algorithm to construct $ {\bf B}= {\bf B}_{cyc}$}
\label{alg:algorithm-B}
\begin{algorithmic}[1]
\State {\bf Input:} \ $ n, s( < n) $
\State {\bf Output:} \ $ {\bf B} \in \mathbf{R}^{n \times n} $
with $ (s+1) $ non-zeros in each row
\State $H = randn(s, n)$;
\State $H(:, n) =-sum(H(:,1:n-1),2)$;
\State B = zeros(n);
\State {\bf for} \  $i = 1 : n$ \ {\bf do}
\State j = mod(i-1:s+i-1,n)+1;
\State B(i, j) = $ [1; -H(:,j(2:s+1)) \backslash H(:,j(1))] $;
\State {\bf end}
\end{algorithmic}
\end{algorithm}

\bibliographystyle{c}
\bibliography{main}

\end{document}